\newtheorem{theorem}{Theorem}[section]
\newtheorem{remark}[theorem]{Remark}
\newtheorem{lemma}[theorem]{Lemma}
\newtheorem{proposition}[theorem]{Proposition}
\numberwithin{equation}{section}
\numberwithin{equation}{section}
\numberwithin{equation}{section}
\newcommand{\Pbb}[1]{\Pb\lb #1\rb}
\newcommand{\Ebb}[1]{\Eb\lbb #1\rbb}
\newcommand{\LL}{L\'{e}vy }
\newcommand{\ttinf}[1]{_{#1\to \infty}}
\newcommand{\Cb}{\mathbb{C}}
\newcommand{\Eb}{\mathbb{E}}
\newcommand{\Nb}{\mathbb{N}}
\newcommand{\Rb}{\mathbb{R}}
\newcommand{\Pb}{\mathbb{P}}
\newcommand{\Qb}{\mathbb{Q}}
\newcommand{\Cc}{\mathcal{C}}
\newcommand{\Mcc}{\mathcal{M}}
\newcommand{\Tc}{\mathcal{T}}
\newcommand{\ind}[1]{1_{\{#1\}}}
\newcommand{\IntZeroInf}{\int_{0}^{\infty}}
\newcommand{\IntZeroOne}{\int_{0}^{1}}
\newcommand{\lb}{\left (}
\newcommand{\rb}{\right )}
\newcommand{\lbb}{\left [}
\newcommand{\rbb}{\right ]}
\newcommand{\labs}{\left |}
\newcommand{\rabs}{\right |}
\newcommand{\lbcurly}{\left\{}
\newcommand{\rbcurly}{\right\}}
\newcommand{\lbcurlyrbcurly}[1]{\lbcurly#1\rbcurly}
\newcommand{\lbrb}[1]{\lb #1 \rb}
\newcommand{\lbbrbb}[1]{\lbb#1\rbb}
\newcommand{\labsrabs}[1]{\labs#1\rabs}
\newcommand{\suminfinf}{\sum_{j=-\infty}^{\infty}}
\newcommand{\sumoneinf}{\sum_{j=1}^{\infty}}
\newcommand{\eigenfunspecial}[1]{\sin\lbrb{\frac{\pi j}{c} #1}}
\newcommand{\eigenfunction}[1]{\sin\lbrb{#1}}
\newcommand{\eigenvaluespecial}{\frac{\pi^2 j^2}{2c^2}}
\newcommand{\eigenvaluespecialRoot}{\frac{\pi j}{c}}
\newcommand{\expeigenvaluespecial}{e^{-\eigenvaluespecial t}}
\newcommand{\expeigenvalue}[1]{e^{-\frac{\pi^2 j^2}{2}#1}}
\newcommand{\minusone}[1]{\lbrb{-1}^{#1}}
\newcommand{\pij}{\pi^2 j^2}
\newcommand{\Tcc}[2]{1_{\{\Tc_{(0,#1)}>#2\}}}
\newcommand{\IntComplexLine}[2]{\int_{#2-i\infty}^{#2+i\infty}#1}
\newcommand{\twopi}{\frac{1}{2\pi i}}
\begin{document}

\title[Survival distributions of Brownian trajectories among obstacles]{Conditional survival distributions of Brownian trajectories in a one dimensional Poissonian environment in the critical case}

\author{Martin Kolb}
\address{Universit\"at Paderborn, Institut f\"ur Mathematik, Warburger Str. 100, 33098 Paderborn, Germany}
\email{kolb@math.uni-paderborn.de}
\author{{Mladen} {Savov}}
\address{Department of Mathematics and Statistics,
  University of Reading, Whiteknights, Reading RG6 6AX, UK}
\email{m.savov@reading.ac.uk}

\maketitle

\begin{abstract}
In this work we consider a one-dimensional Brownian motion with constant drift moving among a Poissonian cloud of obstacles. Our main result proves convergence of the law of processes conditional on survival up to time $t$ as $t$ converges to infinity in the critical case where the drift coincides with the intensity of the Poisson process. This complements a previous result of T. Povel, who considered the same question in the case where the drift is strictly smaller than the intensity. We also show that the end point of the process conditioned on survival up to time $t$ rescaled by $\sqrt{t}$ converges in distribution to a non-trivial random variable,  as $t$ tends to infinity, which is in fact invariant with respect to the drift $h>0$. We thus prove that it is sub-ballistic and estimate the speed of escape. The latter is in a sharp contrast with discrete models of dimension larger or equal to $2$ when the behaviour at criticality is ballistic, see \cite{IV12}, and even to many one dimensional models which exhibit ballistic behaviour at criticality, see \cite{KM12}.
\end{abstract}


\section{Introduction}
The investigation of stochastic processes in a random environment has along history and is still an active area of research. A very thoroughly studied model is the one of a diffusive particle in a Poissonian environment of obstacles. For a detailed description of the general framework and the mathematical details we refer to the very readable account of this presented in \cite{Sz99}. Our starting point is the following model composed of a one-dimensional Brownian particle $(X_t)_{t\geq 0}$, starting from $0$, with a constant drift $h \neq 0$ and law $W^h$, which moves in an environment given by an independent Poisson process in $\mathbb{R}$ with intensity $\nu$ whose law is denoted by $\mathcal{P}$. Further we denote by $C_t=\sup_{s\leq t}X_s-\inf_{s\leq t}X_s=M_t-m_t$ the range of the process. Let $W^h_t$ be the restriction of the Wiener measure $W^h$ to $C\lbrb{0,t}$. The Brownian particle starts from zero and gets killed upon hitting a point of the Poisson process, i.e. the killing time is denoted by $T$. In this work we will focus on the expected survival time 
$$
W^h \otimes \mathcal{P}\lbcurlyrbcurly{T>t} = \mathbb{E}^h\bigl[e^{-\nu C_t}\bigr]
$$
and in particular on the behaviour of the conditioned law
\begin{displaymath}
\mathbb{Q}_t:=\frac{e^{-\nu C_t}}{\mathbb{E}^h\bigl[e^{-\nu C_t}\bigr]}dW^h_t.
\end{displaymath}
Our special emphasis is on the case where $|h|=\nu$ which due to symmetry can be reduced to $h=\nu$. Before we state our results we recall what is known if $h\neq \nu$ and indicate why this model is of interest. 

Motivated by previous heuristic arguments by physicists and simulation studies (see \cite{MPW84} and \cite{GP82}) it was shown in \cite{ES87} that even for the higher dimensional analogue this model exhibits a phase transition in the sense that 
\begin{equation}\label{e:EL}
\lim_{t \rightarrow \infty}\frac{1}{t}\log\bigl( e^{\frac{h^2}{2}t}\mathbb{E}^h\bigl[e^{-\nu C_t}\bigr]\bigr) = \begin{cases}
\frac{1}{2}(|h|-\nu)^2 &\text{\,if $|h|>\nu$}\\
0 &\text{if $|h| \leq \nu$}.
\end{cases}
\end{equation}
Thus there is a critical parameter regime given by $|h|=\nu$. In \cite{Sz90} it was later demonstrated that in all dimensions
\begin{equation}\label{e:Sz}
\lim_{t \rightarrow \infty}\frac{1}{t^{\frac{1}{3}}}\log\bigl( e^{\frac{h^2}{2}t}\mathbb{E}^h\bigl[e^{-\nu C_t}\bigr]\bigr) = 
\frac{1}{2}(|h|-\nu)^2) \text{ if $|h|<\nu$},
\end{equation}
which gives a much more precise version for the subcritical case $|h|<\nu$ than \eqref{e:EL}. 
The one-dimensional situation was further investigated in more detail by T. Povel in \cite{P95}, where he in particular proved the following result
\begin{theorem}[Theorem A ii) in \cite{P95}]\label{t:P95}
If $|h|<\nu$ then the limiting distribution of the process $X_{\cdot}$ under the measure $\Qb_t$ converges as $t\rightarrow \infty$ to a mixture of Bessel-3-processes under which $X$ starts in $0$ and never hits  a random level $\tilde a$ and the density of the mixture is given by $h^2\tilde ae^{-|h|\tilde a},\,\tilde{a}>0$. 
\end{theorem}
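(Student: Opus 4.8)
The plan is to establish convergence of $\Qb_t$ through the Markov property at a fixed time together with a Doob-transform description of the limit. By reflection we may assume $h>0$. The guiding picture is that under $W^h$ the running infimum $m_t$ is a.s.\ finite and eventually constant while $M_t\to+\infty$, so the weight $e^{-\nu C_t}=e^{-\nu M_t}e^{\nu m_t}$ pushes the infimum towards $0$ and forces the range to grow as slowly as possible: balancing the cost $e^{-\nu\ell}$ of making a region of width $\ell$ available against the survival cost $\exp(-\tfrac{\pi^2}{2\ell^2}t)$ of confining Brownian motion to width $\ell$ singles out a slab of width $\asymp t^{1/3}$, which underlies the $t^{1/3}$-rate in \eqref{e:Sz}.

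The heart of the argument is a sharp asymptotic for the partition function $Z_t:=\Eb^h[e^{-\nu C_t}]$ and its conditioned version. One first shows $Z_t\sim c\,e^{-h^2 t/2}g(t)$ for an explicit subexponential $g$ (of the form $e^{-c' t^{1/3}}$ times a power), by inserting the eigenfunction expansion of Brownian motion confined between its running extrema --- the drifted generator confined to an interval of width $\ell$ has principal Dirichlet eigenvalue $\tfrac{h^2}{2}+\tfrac{\pi^2}{2\ell^2}$ --- and performing the saddle-point integration over $\ell$. The exponential rate $\tfrac{h^2}{2}$ is the clue: consider the Markov process $(X_t,m_t)$ on $\{(x,\underline x):x\ge\underline x\}$, with generator $\tfrac12\partial_x^2+h\partial_x$ in the interior and the Neumann-type condition $\partial_{\underline x}f(x,\underline x)\big|_{x=\underline x}=0$ on the diagonal; its translation-invariant, strictly positive eigenfunction with eigenvalue $\tfrac{h^2}{2}$ is unique up to scaling and equals
\[
\chi(x,\underline x)=\bigl(1+h(x-\underline x)\bigr)e^{-h(x-\underline x)}.
\]
The key estimate to be proved is that for fixed $S>0$ and bounded $\Fc_S$-measurable $F$, writing $\Psi_s(x,\bar x,\underline x)=\Eb^h_x[e^{-\nu(\max(\bar x,M_s)-\min(\underline x,m_s))}]$ and applying the Markov property at time $S$,
\[
\frac{\Eb^h\!\bigl[F\,e^{-\nu C_t}\bigr]}{Z_t}=\frac{\Eb^h\!\bigl[F\,\Psi_{t-S}(X_S,M_S,m_S)\bigr]}{Z_t}\ \longrightarrow\ e^{h^2S/2}\,\Eb^h\!\bigl[F\,\chi(X_S,m_S)\bigr]\qquad(t\to\infty);
\]
here $\bar x$ disappears because the surviving paths climb far above $M_S$, the slab width again being $\asymp s^{1/3}$. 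Since $\chi(0,0)=1$ and $e^{h^2 t/2}\chi(X_t,m_t)$ is a $W^h$-martingale (this is exactly the eigenvalue equation plus boundary condition for $\chi$), the right-hand side is a genuine change of measure: the limiting law of $X$ is the Doob transform of $W^h$ by $e^{h^2 t/2}\chi(X_t,m_t)$.

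It then remains to identify this Doob-transformed measure with the asserted mixture. Using $\chi(x,\underline x)=h^2\!\int_{x-\underline x}^{\infty}w\,e^{-hw}\,dw$, one introduces an integration variable $\tilde a>0$ --- a barrier lying below the running infimum --- and recognises, for each fixed $\tilde a$, the Doob transform of $W^h$ killed at $-\tilde a$ by $\psi_{\tilde a}(x):=(x+\tilde a)e^{-h(x+\tilde a)}$, compensated by $e^{h^2 t/2}$; since $\tfrac12\psi_{\tilde a}''+h\psi_{\tilde a}'=-\tfrac{h^2}{2}\psi_{\tilde a}$ and $\psi_{\tilde a}(-\tilde a)=0$, this transform has generator $\tfrac12\partial_x^2+\tfrac{1}{x+\tilde a}\partial_x$, i.e.\ it is a Bessel-$3$ process on $(-\tilde a,\infty)$ started at $0$ --- a process that starts at $0$ and never reaches $-\tilde a$. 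Collecting the prefactors, the outer integral is precisely the mixture with density $h^2\tilde a\,e^{-h\tilde a}$, $\tilde a>0$, which proves the statement. To upgrade the resulting finite-dimensional convergence to convergence of the laws of the processes one adds a Kolmogorov-type increment bound uniform in $t$, deduced from the same Markov decomposition, giving tightness in $C([0,\infty))$ with the local uniform topology.

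The main obstacle is the sharp, spatially uniform asymptotics of $Z_s$ and $\Psi_s$: one needs not merely the logarithmic rate of \eqref{e:Sz} but the full prefactor and its dependence on the initial data $(x,\underline x)$, uniformly enough to pass to the limit in the ratio above, and one must control the interplay between the slab of width $\asymp s^{1/3}$, the random location of the path minimum, and the random depth of descent --- it is this analysis that makes the $\nu$-dependence cancel and that forces precisely the harmonic profile $\chi$. Once it is in place, the Doob-transform identification and the evaluation of the mixing density are routine.
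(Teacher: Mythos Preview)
This theorem is quoted from Povel \cite{P95} and is not proved in the present paper; the paper's own contribution is the critical case $|h|=\nu$ (Theorem~\ref{theorem:BesselMixture}). So there is no proof here to compare against directly, but two comments are worth making.

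The martingale/Doob-transform portion of your sketch is correct and elegant: $e^{h^2t/2}\chi(X_t,m_t)$ with $\chi(x,\underline x)=(1+h(x-\underline x))e^{-h(x-\underline x)}$ is indeed a bounded positive $W^h$-martingale (the It\^o check goes through because $\partial_{\underline x}\chi$ vanishes on the diagonal), and your disintegration $\chi(x,\underline x)=h^2\int_{x-\underline x}^\infty w\,e^{-hw}\,dw$ followed by the $\psi_{\tilde a}$-transform correctly identifies the limiting law as the stated Bessel-$3$ mixture with density $h^2\tilde a\,e^{-h\tilde a}$.

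The gap is precisely where you place it: the ratio limit $\Psi_{t-S}(x,\bar x,\underline x)/Z_t\to e^{h^2S/2}\chi(x,\underline x)$, with enough uniformity to pass to the limit under $\Eb^h$, is asserted but not derived, and your heuristic (``the slab climbs above $M_S$'') does not explain the mechanism by which the $\bar x$- and $\nu$-dependence of $\Psi_{t-S}$ actually disappear. Both Povel and the present paper obtain the needed asymptotics by a different route: one writes $e^{-\nu C_t}=\nu^2\int_0^\infty\!\int_0^\infty e^{-\nu(a+b)}1_{\{\Tc_{(-a,b)}>t\}}\,da\,db$, applies Girsanov to reduce to zero-drift Brownian motion killed on exiting an interval, expands the killed semigroup spectrally, and then analyses the resulting integral over the interval width $c$. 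In the subcritical case that integral yields to Laplace's method (this is Povel's argument, as the introduction here notes); in the critical case the saddle degenerates and the present paper is forced to work with the full eigenfunction expansion. Your $(X_t,m_t)$-eigenfunction framing is a clean way to package the \emph{answer}, but it does not by itself supply the analysis that produces it; what remains is essentially the entire analytic content of the theorem.
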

As is explained nicely in \cite{P95} this theorem describes the macroscopic behaviour of the model and as a matter of fact the limit result is different in the case $h=0$. Thus the presence of the drift has an influence on the macroscopic limit. 

Analogues results for the case of a random walk with drift instead of a Brownian motion with drift have been established in \cite{Ve98} and in the case of a Brownian motion with drift moving among soft Poissonian obstacles in \cite{Se03}. In those works a similar exclusion of the critical case is supposed. 

As mentioned in this document we study the critical value model in Povel, i.e. for a Brownian motion with drift $h$ we investigate the convergence of the Brownian motion under the measure
\begin{equation}\label{eq:measure}
\Qb^{(h)}_t=\frac{e^{-h C_t}}{\Eb^h\lbrb{e^{-h C_t}}}W^h_t,
\end{equation}
i.e. we focus on the case $h=\nu$. 

Even though this type of problem has been intensively investigated we have not been able to locate results covering this case in the present literature and it is the aim of the present work to fill in this gap. It will turn out that the macroscopic behaviour is the same as the one in the case $|h|<\nu$ but the details of the proof tend to be much more demanding. Our starting point will be the same as the one of Povel \cite{P95} but we are forced to work a long a different route as already one of his first steps breaks down in the case $h=\nu$. In fact the first task consists in controlling the behavior of the normalization constant $\mathbb{E}^h\bigl[e^{-\nu C_t}\bigr]$ as $t \rightarrow \infty$. In order to establish this Povel \cite{P95} relies on an application of the classical Laplace method, which is not applicable in our setting namely the case $h=\nu$. 
\begin{remark}
In contrast to \cite{P95} our method of analysing the asymptotic behaviour of $\mathbb{Q}^{(h)}_t$ will essentially rely on some facts from the theory of Mellin transforms and on some ideas around the Poisson summation formula. 
\end{remark}
Let us also emphasize that the range of diffusion processes has been of considerable interest in the probability literature (see e.g. \cite{I} and \cite{TV06}) but our main result does not seem to follow easily from these studies. 
\subsection{Comparison with existing literature on Penalizations}
In this subsection we compare our result and methods with several existing results in the area of Penalizations of Diffusions. 
\subsubsection{Differences to existing related results}
The area of penalizations of diffusion processes is of course very classical, but nonetheless there has been quite some further progress mainly due to the efforts of Roynette, Yor, Vallois and  co-authors. The most classical penalization are those of Feynman-Kac-type, i.e. one considers the family of measures
\begin{displaymath}
\mathcal{W}_t:=\frac{1}{C_t}e^{-\int_0^tV(X_s)\,ds}\,\mathbb{W}_t,
\end{displaymath} 
where $C_t=\mathbb{E}\bigl[e^{-\int_0^tV(X_s)\,ds} \bigr]$ and $V$ is a non-negative function. In the case $\int_{\mathbb{R}}|x|V(x)\,dx<\infty$ -- in physics one might want to call those $V$ short range potentials -- the behaviour of the measures $\mathcal{W}_t$ is investigated in detail. These conditioning are more Markovian than our conditioning and due to this a relation to a corresponding eigenvalue problem is quite straightforward. Moreover, due to the decay condition $\int_{\mathbb{R}}|x|V(x)\,dx<\infty$ the potential $V$ is a 'small' perturbation of the Brownian process and a great deal of results on the Sturm-Liouville operators of the form $\frac{1}{2}\frac{d^2}{dx^2}+V$ are available in the literature under this condition. In our situation the conditioning is less Markovian and does not seem to be small in any sense.

Our work involves a penalization of the Brownian paths by the function $f\lbrb{X_t,m_t,M_t}=e^{\nu X_t-\nu \lbrb{M_t-m_t}}$. For this reason we would like to discuss several links with penalizations of Brownian paths which have been well studied in the literature, see for example the book \cite{RoyYor09}. Closer connections to our work can be found in \cite{RoyValYor06}. There the authors consider penalizations with functions of the type $f\lbrb{X_t,m_t,M_t}$ for standard Brownian motion amongst others. To compare to our case take $f\lbrb{X_t,C_t,L_t}=f\lbrb{X_t,m_t,M_t,L_t}$, where $f$ belongs to a specific class of functions, which give rise to interesting martingales. Unfortunately, this class of functions does not include our case $e^{\nu X_t-\nu C_t}$ and therefore their approach does not seem to be applicable in our framework. Our penalization is motivation from applications from the area of random processes in random environments and can therefore be considered to be a very natural penalization example. We do not see how the approach of \cite{RoyValYor06} can be adapted to the problem at hand.

Another instance in the literature that resembles our work is the paper \cite{RoyValYor08}. In this work penalizations of the standard Brownian motion with functions $f\lbrb{m_t,M_t}$ are considered. However, the major condition on $f$ being with compact support is violated in our scenario as we have an exponent of the range $C_t=M_t-m_t$. This seems to be a fundamental difference stemming from the fact that once $f$ is assumed of compact support one works with the Brownian motion restricted to a finite interval or in more generality to a finite set. In this case the semigroup of the Brownian motion restricted to the interval is compact and its asymptotic behaviour is determined by its first eigenvalue and eigenfunction, see the asymptotic results in \cite[Case 2 in Prop.3.5]{RoyValYor08}. In our case if $h=\nu=1$, $\Eb_0\lbrb{e^{X_t-C_t}}\sim 1/t$ which clearly has no exponential decay. This is due to the fact that the range that contributes for the asymptotic of this quantity is of the type $C_t\in\lbbrbb{a\sqrt{t},b\sqrt{t}}$ which precludes due to compactness requirement the possibility for reduction of our problem to \cite[Case 2 in Prop.3.5]{RoyValYor08}. The difference in the problems can also be discerned from the applicable mathematical techniques. Instrumental in \cite{RoyValYor08} is the Laplace method which completely breaks down in our case and we need to use all spectral quantities to derive our results. 

Summarizing one can say that the existing literature on penalizations does not contain our theorems, and even contrary we believe that our results and methods complement and add to the existing ideas developed in the area of penalizations of diffusions. Apart from being able to answer an existing gap in the literature on penalizations as well as diffusion processes in random environments we believe that also the methodology is of additional interest. We would like to emphasize that several new difficulties appear in the analysis of the critical case $h=\nu$ -- usually considered to be the most interesting one -- in contrast to the case $\nu \neq h$ as well as in contrast to the papers on penalizations mentioned above. In particular we want to stress, that it does not seem to be possible to extract only a finite part of the spectral expansion as dominating contribution, instead it seems to be necessary to work with the full eigenfunction expansion in most calculations. This can be seen in particular in the derivation of equation \eqref{eq:Asympt}

Let us end this introduction with some remarks concerning the structure of this work. In the subsequent section 2 we summarize our main results concerning the asymptotic behaviour of the survival distribution and the limit of the conditioned measure $\mathbb{Q}^{(h)}_t$. These results are proved in sections 3 and 4, wherein we make use of the asymptotic properties of certain functions appearing naturally during the proof. The investigation of those functions is deferred to sections 5, 6, 7 and 8. 
\section[Main Results]{Main Results}\label{sec:mainresults}
\subsection{Notation and conventions}\label{subsec:notation}
Throughout the paper we use $f\sim g$ to denote that $\lim f/g=1$ and $f\asymp g$ to imply the existence of two positive constants $C_1<C_2$ such that $C_1 f\leq g\leq C_2 f$.

Throughout the paper we consider a one-dimensional Brownian motion $X$ with drift $h\in\Rb$. We write $W^h$ respectively $W^h_t$ for the Wiener measure on $C\lbrb{0,\infty}$ respectively the restriction of the Wiener measure on $C\lbrb{0,t}$. When $h=0$ we drop the superscript. Similarly, we denote by $\Eb^h_x\lbbrbb{.}$ the expectation of the Brownian motion with drift $h\neq 0$ started from $x$. When $h=0$ we omit the superscript and write instead $\Eb_x\lbbrbb{.}$.

We use the $m_t,\, M_t,\, 0\leq t\leq \infty$ to denote the running minimum, running maximum of $X$, i.e. $m_t=\inf_{s\leq t}X_s$ and $M_t=\sup_{s\leq t}X_s$. We use $C_t$ for the running range of the process, i.e. $C_t=M_t-m_t$.

Due to symmetry \textit{throughout} the paper we assume that $h=\nu>0$.
\subsection{Asymptotic expansion for the Laplace exponent}\label{subsec:AsymptExpansion}
As mentioned before the first crucial quantity to be understood is $\Eb^{(h)}_0\lbbrbb{e^{-\nu C_t}}$ when $\nu=h$ since it is the normalizing constant in the conditioned measure \eqref{eq:measure}. When the drift $\nu\neq h$, \cite[p.223, (4) and (5)]{P95} discusses the precise rate of asymptotic. Furthermore, when $\nu=h$ it follows from \eqref{e:EL} that $\Eb^{(h)}_0\lbbrbb{e^{\frac{h^2 t}{2}}e^{-h C_t}}$ does not grow exponentially and in our one-dimensional situation it is also not difficult to see that $\Eb^{(h)}_0\lbbrbb{e^{\frac{h^2 t}{2}}e^{-h C_t}}\to 0$. Obviously, for our purpose a much stronger control on the rate of decay is necessary and therefore as a first step we provide in Lemma \ref{lem:Asympt}, which is proved in section \ref{subsec:Lem2.1}, a complete asymptotic expansion for the behaviour of $\Eb^{(h)}_0\lbbrbb{e^{-h C_t}}$ as $t\to\infty$. 
\begin{lemma}\label{lem:Asympt}
Let $X$ be a one-dimensional Brownian motion with drift $h>0$. We have the following asymptotic expansion: namely for any $n\in\Nb^+$, as $t\to\infty$,
\begin{equation}\label{eq:Asympt}
\Eb^{h}_0\lbbrbb{e^{-h C_t}}=e^{-h^2\frac{t}{2}}\Eb_0\lbbrbb{e^{X_{th^2}-C_{th^2}}}=e^{-h^2\frac{t}{2}}\lbrb{\frac{1}{th^2}+\sum_{l=1}^{n}\minusone{l}\frac{2^{1}\lbrb{l+1}!}{(th^2)^{l+1}}+o\lbrb{\frac{1}{t^{n+1}}}}.
\end{equation}
\end{lemma}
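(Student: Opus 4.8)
The first reduction is via Brownian scaling: under $W^h$, the process $(X_s)_{s\le t}$ has the same law as $(hB_{s}+h^2 s/\ldots)$... let me think about this more carefully. Actually the scaling $X_s = h \widetilde{X}_{h^2 s}$ where $\widetilde X$ is... no. The standard trick: if $X$ is BM with drift $h$, write $X_s = B_s + hs$ with $B$ standard. Then by scaling $B_s \stackrel{d}{=} \frac1h B_{h^2 s}$, so $X_s \stackrel{d}{=} \frac1h(B_{h^2 s} + h^2 s) = \frac1h Y_{h^2 s}$ where $Y$ is BM with drift $1$. Then $C_t(X) \stackrel{d}{=} \frac1h C_{h^2 t}(Y)$, giving $\Eb^h_0[e^{-hC_t}] = \Eb^1_0[e^{-C_{th^2}}]$, and a Girsanov/Cameron--Martin change of measure turns $\Eb^1_0[e^{-C_s}]$ into $e^{-s/2}\Eb_0[e^{X_s - C_s}]$ with $X$ now driftless. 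So the whole problem reduces to the asymptotics of $u(s) := \Eb_0[e^{X_s - C_s}]$ as $s\to\infty$.

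Next I would get a tractable representation of $u(s)$. The key point (emphasized in the introduction) is that the relevant range is $C_s \asymp \sqrt s$, so one should condition on the range and use the known spectral/eigenfunction expansion for Brownian motion killed at the endpoints of an interval of length $c$: the transition density is $p_c(s;x,y) = \frac2c \sumoneinf \eigenfunction{\frac{\pi j}{c}x}\eigenfunction{\frac{\pi j}{c}y} e^{-\pi^2 j^2 s/(2c^2)}$. Writing $u(s)$ as an integral over the joint law of $(X_s, m_s, M_s)$ — equivalently integrating over the interval $(m_s, M_s) = (a-c, a)$ with $C_s = c$ — and inserting this series, one obtains $u(s)$ as a double integral (over $c>0$ and over the starting offset) of an explicit $j$-sum involving $e^{a}$-type weights coming from the factor $e^{X_s}$ and the requirement that the walk not have exited. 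Carrying out the elementary integrations in the offset variable collapses things to a single integral in $c$ of $\sum_j (\text{rational in } j, c)\, e^{-\pi^2 j^2 s/(2c^2)}$.

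The analytic heart is then a Mellin-transform / Poisson-summation computation. I would compute the Mellin transform of $u(s)$ in $s$: each term $\int_0^\infty s^{w-1} e^{-\pi^2 j^2 s/(2c^2)}\,ds$ gives $\Gamma(w)(2c^2/\pi^2 j^2)^w$, and after also integrating in $c$ one is left with a Dirichlet-type series $\sum_j j^{-2w}(\cdots)$, i.e. something expressible through $\zeta(2w)$ times elementary Gamma factors. The asymptotic expansion \eqref{eq:Asympt} — a power series in $1/(th^2)$ with coefficients $\minusone{l}\,2\,(l+1)!$ — should then drop out by shifting the inversion contour $\frac{1}{2\pi i}\int_{\sigma-i\infty}^{\sigma+i\infty} s^{-w}\,\widetilde u(w)\,dw$ to the left and collecting residues at the poles $w = 1, 2, 3, \dots$; the pattern $2(l+1)!$ with alternating signs is exactly what a simple pole structure with residue a factorial produces. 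The remainder estimate $o(t^{-(n+1)})$ comes from bounding the shifted contour integral, using decay of $\zeta$ and $\Gamma$ along vertical lines (Stirling) to justify absolute convergence and the interchange of sum and integral.

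The main obstacle I anticipate is twofold: first, setting up the representation of $u(s)$ rigorously — carefully handling the joint distribution of $(X_s, m_s, M_s)$, the contribution of the event that $X$ has not yet moved a full range $c$ by time $s$, and justifying term-by-term integration of the (only conditionally nice) eigenfunction series near $c\to 0$ and $c\to\infty$; and second, the contour-shift bookkeeping — tracking all Gamma factors so that the residue at $w=l+1$ yields precisely $\minusone{l}\,2\,(l+1)!$ rather than merely "some constant", and controlling uniformity so the error is genuinely $o(t^{-(n+1)})$ for every $n$. The Poisson-summation viewpoint (rewriting the $j$-sum over the eigenvalues as a sum over winding/reflection terms) is likely the cleanest way to both identify the leading $1/(th^2)$ term and to see why no term of order $t^{-1/2}$, $t^{0}$, etc. appears.
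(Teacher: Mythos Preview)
Your reduction to $h=1$ via scaling and Girsanov is correct and is exactly how the paper begins; likewise your plan to pass through the spectral expansion of Brownian motion killed on an interval matches the paper, which arrives at
\[
\Eb_0\bigl[e^{X_t-C_t}\bigr]=\int_0^\infty 2c\sumoneinf(-1)^{j+1}\frac{\pi^2 j^2}{(\pi^2 j^2+c^2)^2}\,e^{-\pi^2 j^2 t/(2c^2)}\bigl(1-(-1)^je^{-c}\bigr)^2\,dc.
\]
Where your plan diverges is the Mellin step, and here there is a real gap. You propose to Mellin-transform in the time variable and expect the answer to factor as $\zeta(2w)$ times elementary Gamma factors, but the rational weight $\pi^2 j^2/(\pi^2 j^2+c^2)^2$ couples $j$ and $c$: after computing $\int_0^\infty s^{w-1}e^{-\pi^2 j^2 s/(2c^2)}ds=\Gamma(w)(2c^2/\pi^2 j^2)^{w}$ and then $\int_0^\infty c^{2w+1}(\pi^2 j^2+c^2)^{-2}dc=\tfrac12(\pi^2 j^2)^{w-1}B(w+1,1-w)$, the powers of $j$ cancel completely and you are left with the divergent $\sum_j(-1)^{j+1}$. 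The extra factor $(1-(-1)^je^{-c})^2$ rescues convergence but destroys any clean Dirichlet-series structure, so the meromorphic continuation and pole locations of $\tilde u(w)$ are not transparent. (A minor point: for large-$s$ asymptotics via $s^{-w}\tilde u(w)$ you must shift the contour to the \emph{right}, not the left.)

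The paper sidesteps this by first substituting $c=\sqrt{t/v}$, which makes the spectral exponent $e^{-\pi^2 j^2 v/2}$ independent of $t$; all $t$-dependence then sits in the factor $\bigl(1+\pi^2 j^2 v/t\bigr)^{-2}$ and in an exponentially negligible piece coming from $(1-(-1)^je^{-\sqrt{t/v}})^2$, disposed of separately. The Mellin transform is applied not in time but to the one-variable function $(1+x)^{-2}$ via $\Mcc f_2(s)=\pi(1-s)/\sin(\pi s)$; this turns the rational prefactor into a contour integral in which the $j$-sum becomes the entire function $\eta(s,v)=\sumoneinf(-1)^{j+1}(\pi^2 j^2)^{1-s}e^{-\pi^2 j^2 v/2}$. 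Shifting \emph{that} contour to the left past the simple poles of $1/\sin(\pi s)$ at $s=0,-1,-2,\dots$ gives an expansion of $F(v,t)$ in powers of $v/t$, and integrating each term in $v$ (after identifying $\eta(-l,v)$ with derivatives of a theta-type function $G(v,\tfrac12)$ and using Poisson summation only to control its $v\to0$ behaviour and hence integrability) produces exactly the coefficients $(-1)^l\,2(l+1)!$. So your toolbox is right, but the Mellin transform is deployed on a different object and in a different variable than you anticipate; the substitution $c=\sqrt{t/v}$ is the missing idea that decouples $t$ from the spectral sum and makes the residue calculation close.
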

\begin{remark}
It is useful to compare the assertion of this Lemma with the results \eqref{e:EL} and \eqref{e:Sz}. In the one-dimensional situation the critical case the asymptotic behaviour of the expected survival distribution  differs significantly from the subcritical and the supercritical cases in the fact that the decay has only additionally a polynomial decay factor. Therefore, from this point of view it is not clear, whether the behaviour of $X_t$ under $\mathbb{Q}^{(h)}_t$ in the critical case is similar to the subcritical and the supercritical, respectively, or different from both regimes.  
\end{remark}
\subsection{End point limiting behaviour}
The next result shows that the minimum  $m_t$ under the limiting measure is a non-degenerate random variable and thus in the limit the process is pushed away from $-\infty$.
\begin{theorem}\label{lem:minimumLaw}
For any $A>0$, we have that 
\begin{equation}\label{eq:minimumLaw}
\lim\ttinf{t}\Qb^{(h)}_t\lbrb{-m_t\leq A}=h\int_{0}^{A}e^{-ha}da
\end{equation}
and therefore $\lim\ttinf{t}m_t\stackrel{d}=m_{\infty}$ with  $m_{\infty}\sim Exp(h)$. 
\end{theorem}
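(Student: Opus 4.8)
The plan is to reduce the assertion, by a change of measure and Brownian scaling, to a two‑sided asymptotic estimate for \emph{driftless} Brownian motion, to use Lemma \ref{lem:Asympt} as a black box for the normalising constant, and to obtain the remaining estimate by decomposing the path at its first passage below the level $-hA$.

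Concretely, I would start from
$$\Qb^{(h)}_t\lbrb{-m_t\le A}=\frac{\Eb^{h}_0\bigl[e^{-hC_t}\ind{-m_t\le A}\bigr]}{\Eb^{h}_0\bigl[e^{-hC_t}\bigr]},$$
apply the Cameron--Martin relation $W^h_t=e^{hX_t-h^2t/2}\,W_t$ on $\Fc_t$ and then the scaling identity $(hX_t,hC_t,-hm_t)\stackrel{d}{=}(X_{h^2t},C_{h^2t},-m_{h^2t})$ for standard Brownian motion. Writing $s=h^2t$ and $\phi(u):=\Eb_0\bigl[e^{X_u-C_u}\bigr]$, the quotient becomes
$$\Qb^{(h)}_t\lbrb{-m_t\le A}=\frac{N_A(s)}{\phi(s)},\qquad N_A(s):=\Eb_0\bigl[e^{X_s-C_s}\ind{-m_s\le hA}\bigr].$$
By Lemma \ref{lem:Asympt} applied with drift $1$ and time $s$ one has $\phi(s)\sim 1/s$; moreover $X_u-C_u=m_u+(X_u-M_u)\le 0$, so $\phi(u)\le 1$ and hence $\phi(u)\le\min(1,c/u)$ for all $u>0$ with a fixed $c$. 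Since $h\int_0^A e^{-ha}\,da=1-e^{-hA}$, the theorem reduces to proving $s\,N_A(s)\to 1-e^{-hA}$ as $s\to\infty$; the statement about $m_\infty$ then follows from convergence of the distribution functions at every continuity point, noting $\Qb^{(h)}_t(-m_t\le 0)=0$ because the process is strictly negative immediately after time $0$.

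For the estimate of the complement $\Eb_0[e^{X_s-C_s}\ind{m_s<-hA}]=\phi(s)-N_A(s)$, let $\tau$ be the first hitting time of $-hA$ by $X$. On $\{m_s<-hA\}=\{\tau<s\}$ the process $Y_\cdot:=X_{\tau+\cdot}+hA$ is, by the strong Markov property, a standard Brownian motion on $[0,s-\tau]$ independent of $\Fc_\tau$, and a direct computation gives $m_s=-hA+\tilde m_{s-\tau}$, $M_s=\max(M_\tau,\tilde M_{s-\tau}-hA)$, whence
$$X_s-C_s=-hA+(\tilde X_{s-\tau}-\tilde C_{s-\tau})-(M_\tau+hA-\tilde M_{s-\tau})^+,$$
tildes referring to $Y$. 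Taking the conditional expectation over $Y$ yields
$$\Eb_0\bigl[e^{X_s-C_s}\ind{m_s<-hA}\bigr]=e^{-hA}\,\Eb\bigl[\ind{\tau<s}\,\psi_{M_\tau+hA}(s-\tau)\bigr],\qquad \psi_b(u):=\Eb_0\bigl[e^{X_u-C_u-(b-M_u)^+}\bigr].$$
I would then show $s\,\Eb[\ind{\tau<s}\psi_{M_\tau+hA}(s-\tau)]\to 1$. On $\{\tau\le s/2\}$ one has the deterministic bound $s\,\psi_{M_\tau+hA}(s-\tau)\le s\,\phi(s-\tau)\le 2c$ and, for a.e.\ fixed $\omega$, $s\,\psi_{M_\tau+hA}(s-\tau)\to 1$ because $\phi(u)\sim 1/u$ while $\phi(u)-\psi_b(u)\le\Eb_0[e^{m_u}\ind{M_u<b}]=o(1/u)$ for fixed $b$; dominated convergence then gives this part the value $1$. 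On $\{s/2<\tau<s\}$ the density of $\tau$ is $O(s^{-3/2})$, so that part is $O(s^{-3/2})\int_0^{s/2}\min(1,c/v)\,dv=O(s^{-3/2}\log s)$, negligible after multiplication by $s$. Combining, $s\,\Eb_0[e^{X_s-C_s}\ind{m_s<-hA}]\to e^{-hA}$ and hence $s\,N_A(s)=s\,\phi(s)-s\,\Eb_0[e^{X_s-C_s}\ind{m_s<-hA}]\to 1-e^{-hA}$.

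The main obstacle is precisely the uniformity needed to turn the heuristic ``$\psi_b(u)\approx\phi(u)\approx 1/u$'' into an honest limit: one must control the whole range of post‑passage times $s-\tau\in(0,s)$ — where $\phi$ interpolates between being $\approx 1$ and $\approx 1/s$ — against the polynomially decaying, infinite‑mean law of the hitting time $\tau$, and one must quantify the rate at which $\psi_b(u)$ approaches $\phi(u)$. This last point is where ``asymptotic properties of certain functions'' genuinely enter: the bound $\Eb_0[e^{m_u}\ind{M_u<b}]=o(u^{-n})$ for all $n$ is a one‑sided, $t^{1/3}$‑type survival estimate (cf.\ \eqref{e:Sz}) that I would establish by a Laplace/saddle‑point argument splitting on the size of $m_u$. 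Everything else — the Girsanov--scaling reduction and the decomposition at $\tau$ — is routine once Lemma \ref{lem:Asympt} is in hand.
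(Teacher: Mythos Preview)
Your argument is correct and takes a genuinely different route from the paper. After the common Girsanov/scaling reduction and the use of Lemma~\ref{lem:Asympt} for the denominator, the paper does \emph{not} decompose at the hitting time of $-hA$. Instead it rewrites $e^{-C_t}=\int_0^\infty\!\int_0^\infty e^{-a-b}\ind{\Tc_{(-a,b)}>t}\,da\,db$, observes that intersecting with $\{-m_t\le A\}$ simply replaces $a$ by $a\wedge Ah$, shifts the starting point, and then invokes the key analytic Lemma~\ref{lem:Density}, namely $t\int_a^\infty e^{-c}\Eb_a\bigl[e^{X_t}\ind{\Tc_{(0,c)}>t}\bigr]dc\to a$ uniformly on compacts, whose proof rests on the full eigenfunction expansion and the detailed study of the auxiliary functions $H,\Gamma,J_1,J_2$ in Sections~\ref{sec:J_1}--\ref{sec:J_2}. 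Your approach is more probabilistic and self-contained for this particular theorem: the strong Markov decomposition at $\tau$ reduces everything to the single asymptotic $\phi(u)\sim 1/u$ plus a one-sided ``subcritical'' bound $\Eb_0[e^{m_u}\ind{M_u<b}]=O(e^{-cu^{1/3}})$, both of which are elementary once Lemma~\ref{lem:Asympt} is available. The trade-off is that the paper's Lemma~\ref{lem:Density} is the common engine behind Theorems~\ref{lem:minimumLaw}, \ref{lem:maximumLaw} and~\ref{theorem:BesselMixture}, so its heavier spectral machinery is amortised across all three results; your first-passage decomposition, while cleaner here, does not obviously transfer to the analysis of $M_t/\sqrt{t}$ or to the identification of the limiting Bessel mixture.
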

In the following theorem we study the joint law of the maximum $M_t=\sup_{s \leq t}X_s$ up to time $t$ and the $X_t$ as $t \rightarrow \infty$. 
\begin{theorem}\label{lem:maximumLaw}
We have that under $\Qb^{(h)}_t$, 
\begin{equation}\label{eq:weakBivariate}
\lim\ttinf{t}\lbrb{\frac{M_t}{\sqrt{t}},\frac{X_t}{\sqrt{t}}}\stackrel{d}{=}\lbrb{M_\infty,M_\infty}.
\end{equation}
where $M_\infty$ has a distribution function which does not depend on $h>0$ and is given by the expression
\begin{equation}\label{eq:T}
T(x)=-G\lbrb{\frac{1}{x^2},\frac{1}{2}}=2\sumoneinf \minusone{j+1}\expeigenvalue{\frac{1}{x^2}},\,x>0,
\end{equation}
where the function $G(\cdot,\cdot)$ is defined in \eqref{eq:F}.
\end{theorem}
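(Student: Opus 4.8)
The plan is to reduce the bivariate limit \eqref{eq:weakBivariate} to a one–dimensional weak limit for $M_t/\sqrt t$ together with one explicit Laplace/theta asymptotic, proceeding in four steps. \textbf{Reduction.} For $x,y>0$ write
\[
\Qb^{(h)}_t\lbrb{M_t\leq x\sqrt t,\ X_t\leq y\sqrt t}=\frac{\Eb^h_0\lbbrbb{e^{-hC_t}\ind{M_t\leq x\sqrt t,\ X_t\leq y\sqrt t}}}{\Eb^h_0\lbbrbb{e^{-hC_t}}},
\]
strip the drift by the Cameron--Martin identity $\Eb^h_0\lbbrbb{F}=\Eb_0\lbbrbb{Fe^{hX_t-h^2t/2}}$, and apply Brownian scaling: writing $B_s:=X_{ts}/\sqrt t$ for a standard Brownian motion on $[0,1]$, $\bar B_1:=\sup_{s\leq1}B_s$, $\underline B_1:=\inf_{s\leq1}B_s$, and $\lambda:=h\sqrt t$, the factors $e^{-h^2t/2}$ cancel and the ratio equals $J(\lambda;x,y)/J(\lambda;\infty,\infty)$ with $J(\lambda;x,y):=\Eb_0\lbbrbb{e^{\lambda(B_1-\bar B_1+\underline B_1)}\ind{\bar B_1\leq x,\ B_1\leq y}}$. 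Moreover $J(\lambda;\infty,\infty)=\Eb_0\lbbrbb{e^{X_{\lambda^2}-C_{\lambda^2}}}\sim\lambda^{-2}$ by Lemma~\ref{lem:Asympt}. In particular $h$ enters only through $\lambda\to\infty$, which is already the source of the $h$--independence of the limit.

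\textbf{Collapse onto the diagonal.} Since $B_1-\bar B_1\leq0$ and $\underline B_1\leq0$, on $\{\bar B_1-B_1>\epsilon\}$ one has $e^{\lambda(B_1-\bar B_1+\underline B_1)}\leq e^{-\lambda\epsilon}$, and likewise on $\{-\underline B_1>\epsilon\}$; dividing by $J(\lambda;\infty,\infty)\sim\lambda^{-2}$ gives
\[
\Qb^{(h)}_t\lbrb{\bar B_1-B_1>\epsilon}+\Qb^{(h)}_t\lbrb{-\underline B_1>\epsilon}\ \leq\ 2\lambda^2e^{-\lambda\epsilon}\lbrb{1+o(1)}\ \longrightarrow\ 0 .
\]
(The second bound is consistent with Theorem~\ref{lem:minimumLaw}.) Hence $(M_t-X_t)/\sqrt t\to0$ in $\Qb^{(h)}_t$--probability, so by Slutsky it suffices to prove $\Qb^{(h)}_t(M_t\leq x\sqrt t)=J(\lambda;x,\infty)/J(\lambda;\infty,\infty)\to T(x)$ for each $x>0$; the bivariate statement \eqref{eq:weakBivariate} with limit $(M_\infty,M_\infty)$ then follows from the inclusion $\{\bar B_1\in(\min(x,y),x],\ B_1\leq y\}\subseteq\{\bar B_1-B_1>\delta\}\cup\{\bar B_1\in(y,y+\delta]\}$, upon letting $\lambda\to\infty$ and then $\delta\downarrow0$ and using continuity of $T$.

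\textbf{The leading coefficient.} Writing $e^{\lambda\underline B_1}=\lambda\int_0^\infty e^{-\lambda v}\ind{\underline B_1>-v}\,dv$ and $e^{\lambda(B_1-\bar B_1)}=\lambda\int_0^\infty e^{-\lambda w}\ind{B_1>\bar B_1-w}\,dw$ turns the numerator into a double Laplace transform
\[
J(\lambda;x,\infty)=\lambda^2\int_0^\infty\!\!\!\int_0^\infty e^{-\lambda(v+w)}\,\Pb_0\lbrb{\underline B_1>-v,\ \bar B_1\leq x,\ B_1>\bar B_1-w}\,dv\,dw ,
\]
to which I would feed the classical image series / eigenfunction expansion for the Brownian transition density confined to $(-v,b)$. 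As $\lambda\to\infty$ the kernel $e^{-\lambda(v+w)}$ localises $(v,w)$ at scale $\lambda^{-1}$ while the running maximum stays at scale $1$, and the relevant small--parameter behaviour is $\Pb_0(\underline B_1>-v,\ \bar B_1\leq x,\ B_1>\bar B_1-w)\sim vw\,T(x)$ as $v,w\downarrow0$: the factor $v$ reflects the path being forced to stay essentially nonnegative, the factor $w$ the terminal point being forced onto the running maximum (an absorbing--wall normal derivative), and the alternating series comes from the sine eigenfunctions evaluated at the upper wall. This gives $J(\lambda;x,\infty)\sim\lambda^{-2}T(x)$, hence $\Qb^{(h)}_t(M_t\leq x\sqrt t)\to T(x)$; the explicit form $T(x)=-G(1/x^2,1/2)=2\sum_{j\geq1}(-1)^{j+1}e^{-\pi^2j^2/(2x^2)}$ is then obtained from the definition \eqref{eq:F} of $G$ via the Poisson summation formula relating the image series to the eigenfunction series, and the same computation identifies the total mass as $1$, in agreement with $J(\lambda;\infty,\infty)\sim\lambda^{-2}$.

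\textbf{Where the difficulty lies.} The genuinely hard step is the one just sketched: establishing $\Pb_0(\underline B_1>-v,\ \bar B_1\leq x,\ B_1>\bar B_1-w)=vw\,T(x)+o(vw)$ \emph{uniformly enough} over the window $v,w\asymp\lambda^{-1}$ to extract the $\lambda^{-2}$ coefficient, and showing that the remainder together with the contribution of $(v,w)$ bounded away from the origin is negligible after multiplication by $\lambda^2$. As stressed in the introduction, the dominant contribution comes from the whole scaling window $\bar B_1\asymp1$, $-\underline B_1\asymp\bar B_1-B_1\asymp\lambda^{-1}$, and infinitely many modes of the spectral expansion contribute, so no Laplace--type saddle point is available; the estimates must be routed through the Mellin--transform and Poisson--summation analysis of Sections~5--8, the same machinery underlying the normalisation \eqref{eq:Asympt}. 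For the remaining ranges $\min(x,y)\leq0$ the limit equals $0$, which is immediate since $M_\infty>0$ almost surely.
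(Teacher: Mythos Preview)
Your Steps 1 and 2 are correct, and Step~2 is genuinely cleaner than the paper's treatment of the bivariate collapse. The paper instead shows directly that $\Qb^{(h)}_t\lbrb{M_t>\nu\sqrt t,\ X_t\leq\vartheta\sqrt t}\to0$ for every $\nu>\vartheta>0$ by bounding $e^{X_t}\leq e^{\vartheta\sqrt t}$ inside the integral representation coming from \eqref{eq:keyRangeForm}; your exponential bound $e^{\lambda(B_1-\bar B_1+\underline B_1)}\leq e^{-\lambda\varepsilon}$ on $\{\bar B_1-B_1>\varepsilon\}$, combined with $J(\lambda;\infty,\infty)\sim\lambda^{-2}$, achieves the same conclusion in one line and yields the Slutsky reduction immediately.

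The gap is Step~3. You state the asymptotic
\[
\Pb_0\lbrb{\underline B_1>-v,\ \bar B_1\leq x,\ B_1>\bar B_1-w}=vw\,T(x)+o(vw)
\]
but give only a heuristic (``the factor $v$ reflects\ldots, the factor $w$\ldots''), and in Step~4 you yourself concede that establishing this with the uniformity needed to pass through the Laplace integral ``must be routed through the Mellin--transform and Poisson--summation analysis of Sections~5--8''. That is precisely the content you have not supplied: the double Laplace representation relocates the difficulty but does not remove it. Note also that the event $\{B_1>\bar B_1-w\}$ couples the terminal point to the \emph{random} level $\bar B_1$, so extracting the factor $w$ is not a simple boundary-derivative computation in a fixed interval; one still has to integrate over the position of the maximum, and it is exactly this integration that produces the full theta series $T(x)$ rather than a single eigenmode.

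The paper's route avoids your intermediate three-point probability altogether. It keeps the representation \eqref{eq:keyRangeForm}, writes $\Eb_0\lbbrbb{e^{X_t-C_t}\ind{M_t\leq\nu\sqrt t}}$ as $Y_t(\nu)+O_t(\nu)$, disposes of $O_t(\nu)$ by elementary bounds, and after the change of variables $u=t/c^2$ reduces $Y_t(\nu)$ to $\int_0^\infty e^{-a}\lbrb{J_1(t,a,\nu)+J_2(t,a,\nu)}\,da$. The limit then follows from Lemma~\ref{lem:J_1} and Propositions~\ref{prop:J1UniversalBound}--\ref{prop:J_1Improvement} (for $J_1$) together with Lemma~\ref{lem:J2} (for $J_2$), all of which are proved in full via the $\Gamma$/$H$ machinery of Section~\ref{subsec:GammaH}. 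If you want to complete your argument, the cleanest path is probably to abandon the trivariate probability and instead observe that, after your Laplace substitutions, what remains is exactly an integral of the type the paper handles with $J_1(t,a,\nu)$; otherwise you would have to redo that analysis in the $(v,w)$ variables.
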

\begin{remark}
This result shows that under $\Qb^{(h)}_t$ the process is sub-ballistic and estimates its escape rate, i.e. $\sqrt{t}$. This is in contrast with higher dimensional discrete models of the same type where at criticality the process is ballistic, see \cite{IV12}, and the fact that in one dimension all models but this one are ballistic too, see \cite{KM12}.
\end{remark}
\begin{remark}
Thus in the case $h=\nu$ the position $X_t$ at time $t$ and its maximum up to time $t$ properly rescaled exhibit the same behaviour and even converge to a fully dependent pair of random variables. Moreover, the limiting distribution does not depend on $h$. This eventually follows from the scaling property of the Brownian motion, see section \ref{subsec:Thm2.6}. The same will be valid for the minimum process $m_t$ and $X_t$ provided $h<0$.
\end{remark}
\begin{remark}
It is interesting to note that variants of \eqref{eq:T} appear throughout the review paper \cite{BiPitYor01}. Thus, our random variable $M_\infty$ is a transformation of various quantities such as the maximum of a Brownian bridge, etc., but since we have no further probabilistic explanation as to why these relationships hold we do not discuss the matter further.
\end{remark}
\subsection{Limiting process}
Next we consider the convergence of the process $X$ under the measures $\Qb^{(h)}_t$. Thus, we will specify how the beginning of the process $X$ is affected in the limit by the conditional measures \eqref{eq:measure}.
We have the following result.
\begin{theorem}\label{theorem:BesselMixture}
Under $\Qb^{(h)}_t$ the process $X$ converges to the process $Y$ which is a mixture of (shifted) three dimensional Bessel processes. In more detail, $Y$ is a Brownian motion started from zero and not allowed to hit independent random level $-\tilde a$ whose density is given by $h^2ae^{-ha}da,a>0$. 
\end{theorem}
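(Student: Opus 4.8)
The plan is to establish finite-dimensional convergence of $X$ under $\Qb^{(h)}_t$ together with tightness, so as to upgrade to convergence in law on path space. The starting point is the same decomposition as in Povel \cite{P95}: fix $s>0$ and a bounded continuous functional $F$ of the path on $[0,s]$, and write
\begin{equation}\label{eq:BesselSplit}
\Ebb{F(X_u,u\leq s)\,\Qb^{(h)}_t}=\frac{\Ebb{F(X_u,u\leq s)\,e^{-h C_s}\,\Eb^h_{X_s}\lbbrbb{e^{-h\, \widetilde C_{t-s}}\wedge(\text{overlap terms})}}}{\Eb^h_0\lbbrbb{e^{-h C_t}}},
\end{equation}
where the inner expectation is taken over the path on $[s,t]$ and one must carefully account for the fact that the range $C_t$ over $[0,t]$ is $M_t-m_t$ with $M_t=\max(M_s,\widetilde M_{t-s})$ and similarly for $m_t$, so the future range does not simply decouple. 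The idea is that, conditionally on $(X_s,M_s,m_s)$, the numerator factorizes into the penalization functional on $[0,s]$ times a ``survival kernel'' $\Psi_{t-s}(X_s-m_s,M_s-X_s)$ which, by the Markov property, equals $\Eb^h_0\lbbrbb{e^{-h\,(\,(M_{t-s}\vee \beta)-(m_{t-s}\wedge(-\alpha))\,)}}$ with $\alpha=X_s-m_s$, $\beta=M_s-X_s$.

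Next I would extract the asymptotics of this survival kernel. The key observation, already visible in Theorem \ref{lem:minimumLaw} and Theorem \ref{lem:maximumLaw} and in Lemma \ref{lem:Asympt}, is that the dominant contribution to survival comes from paths whose range grows like $\sqrt{t}$ while the left endpoint $m_t$ stays at distance $O(1)$ from the origin. Concretely I expect to show, using the same Mellin/Poisson-summation machinery that proves Lemma \ref{lem:Asympt} applied to the two-barrier Laplace transform, that
\begin{equation}\label{eq:KernelAsympt}
\Psi_{t-s}(\alpha,\beta)\sim \frac{1}{h^2(t-s)}\,e^{-h^2(t-s)/2}\,g(\alpha)
\end{equation}
as $t\to\infty$, for some explicit function $g$ on $(0,\infty)$ that is \emph{linear near the origin}, reflecting that the relevant event is $\{m_s=-\alpha,\ \text{the process never goes below } -\tilde a \text{ for a level } \tilde a\ge \alpha\}$; the $\beta$-dependence disappears in the limit because the maximum is free to run off to $+\infty$ at rate $\sqrt{t}$. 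Dividing by the denominator, whose asymptotics is exactly $\frac{1}{h^2 t}e^{-h^2 t/2}(1+o(1))$ by Lemma \ref{lem:Asympt}, the prefactors $\frac{1}{h^2(t-s)}e^{-h^2(t-s)/2}$ and $\frac{1}{h^2 t}e^{-h^2 t/2}$ cancel up to $e^{h^2 s/2}(1+o(1))$, and one is left with
\begin{equation}\label{eq:BesselLimit}
\lim\ttinf{t}\Ebb{F(X_u,u\leq s)\,\Qb^{(h)}_t}=e^{h^2 s/2}\,\Ebb{F(X_u,u\leq s)\,e^{-h C_s}\,g(X_s-m_s)}.
\end{equation}
It then remains to identify the right-hand side as the law of the Bessel mixture $Y$. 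For this I would verify, by a direct computation with the known joint density of $(X_s,m_s)$ for Brownian motion with drift $h$ (reflection principle plus Girsanov), that the measure on $C[0,s]$ with density $e^{h^2 s/2}e^{-h C_s}g(X_s-m_s)$ with respect to $W^h_s$ coincides with $\int_0^\infty h^2 a e^{-ha}\,\mathbb{P}^{(a)}_s(\,\cdot\,)\,da$, where $\mathbb{P}^{(a)}$ is the law of Brownian motion started at $0$ conditioned (in the Doob $h$-transform sense) never to hit $-a$; the factor $h^2 a e^{-ha}$ appears precisely as the normalized version of the weight coming from the linear behaviour of $g$ at $0$ together with the exponential tail from the drift, matching the mixture density in the statement and, reassuringly, reproducing Theorem \ref{lem:minimumLaw} upon taking $F\equiv 1$ and reading off the marginal of $m_s=-\tilde a$.

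The main obstacle is step \eqref{eq:KernelAsympt}: controlling the two-barrier survival kernel $\Psi_{t-s}(\alpha,\beta)$ \emph{uniformly} in $\alpha,\beta$ on the scale $O(\sqrt t)$, since the cancellation in \eqref{eq:BesselLimit} requires not just pointwise asymptotics but enough uniformity and integrable domination to pass the limit under $\Ebb{\cdot}$ via dominated convergence — exactly the point flagged in the introduction that ``it does not seem to be possible to extract only a finite part of the spectral expansion.'' I expect to handle this by expanding $\Psi_{t-s}$ in the Dirichlet eigenfunction basis $\sin(\pi j x/c)$ on the interval of length $c=\alpha+\beta$ after the first excursion has set the barriers, then applying the Mellin-transform asymptotics of section \ref{subsec:Lem2.1} term by term and resumming, keeping track of the error via the Poisson summation formula; the domination comes from the crude bound $e^{-h C_s}\le 1$ together with the fact that $g$ grows at most linearly, so that $g(X_s-m_s)$ is integrable against $W^h_s$. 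Tightness on $C[0,s]$ is then routine since, on every event of non-vanishing $\Qb^{(h)}_t$-mass, the path is absolutely continuous with respect to $W^h_s$ with a density bounded on compacts, so Kolmogorov's criterion transfers.
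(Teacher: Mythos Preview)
Your strategy --- apply the Markov property at a fixed time $s$, extract a survival kernel depending on $(X_s-m_s,M_s-X_s)$, and show that its asymptotics against the denominator produce the Bessel mixture --- is exactly the paper's route. The paper's key ingredient is Lemma~\ref{lem:Density} (together with Lemma~\ref{lem:improvedConvergence}), which is precisely your kernel asymptotic in a different parametrisation. But two concrete errors in your write-up would derail the computation if carried through.

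First, your kernel $\Psi_{t-s}(\alpha,\beta)=\Eb^h_0\bigl[e^{-h((M_{t-s}\vee\beta)-(m_{t-s}\wedge(-\alpha)))}\bigr]$ already equals $\Eb^h\bigl[e^{-hC_t}\mid\mathcal F_s\bigr]$ in full, since $(M_{t-s}\vee\beta)-(m_{t-s}\wedge(-\alpha))$ \emph{is} $C_t$ after the shift. Hence there is no separate factor $e^{-hC_s}$: your equation~\eqref{eq:BesselLimit} double-counts it and gives the wrong limit. Second, and relatedly, the limiting profile is not linear near the origin: using the representation $e^{-hC^{(\alpha,\beta)}}=h^2\int_\alpha^\infty\int_\beta^\infty e^{-h(a+b)}\ind{\Tc_{(-a,b)}>t-s}\,da\,db$ and Lemma~\ref{lem:Density}, one finds
\[
h^2(t-s)\,e^{h^2(t-s)/2}\,\Psi_{t-s}(\alpha,\beta)\ \longrightarrow\ (1+h\alpha)e^{-h\alpha},
\]
so $g(\alpha)=(1+h\alpha)e^{-h\alpha}$, which satisfies $g(0)=1$. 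With this $g$ and the corrected factorisation, the limit becomes $e^{h^2s/2}\Eb^h\bigl[F\cdot g(X_s-m_s)\bigr]=\Eb_0\bigl[F\cdot(1+h(X_s-m_s))e^{hm_s}\bigr]$, which one checks directly (via \eqref{eq:semigroupBessel} and Fubini) coincides with $\int_0^\infty h^2 a e^{-ha}\Eb^\dagger_a[F(X_\cdot-a)]\,da$.

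On the uniformity issue you flag: the paper does not attempt a global dominated-convergence bound for $\Psi$. Instead it first truncates on $\{m_t\ge -A\}$ (Theorem~\ref{lem:minimumLaw} controls the complement), which after the integral representation confines the barrier variable $a$ to $[0,Ah]$; then the \emph{uniform on compacts} convergence in \eqref{eq:lemDensity1} suffices, and one sends $A\to\infty$ at the end. This two-step localisation is what replaces your proposed global domination and is the cleaner way to close the argument.
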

This result is the analogue of Povel's Theorem \ref{t:P95} for the critical case $|h| = \nu$. It tells us that at the critical case for the initial behaviour of $X$ in the limit under the conditional measures \eqref{eq:measure} there is no transition. 

This is in contrast with the transition of the behaviour of the normalizing quantities $\Eb^h\lbbrbb{e^{-\nu C_t}}$ in \eqref{eq:measure}. Let us point out, that our results clearly demonstrat that the large time behaviour of the the process under the conditional measure differs from the behaviour of the $X_t$ under the conditional measure $\mathbb{Q}_t$, significantly. 

\begin{remark}
Let $h=1$. Note that the exponential law of the global infimum under the limiting measure differs from the law of the barrier $\tilde a$ in Theorem \ref{theorem:BesselMixture}. 
A three dimensional Bessel that is started from $x>0$ then its global infimum is distributed as a uniform random variable on $[0,x]$, see \cite[(8.3.5), p.85]{Don07} wherein $h(x)=x$. Since we start from an independent random level we have that the density of the global minimum is given by $\int_{x}^{\infty}\frac{1}{y}ye^{-y}dydx=e^{-x}dx$ where $1/y$ is the density of the uniform distribution on $[0,y]$ and $ye^{-y}$ is the density of the random level. This shows that the two results are consistent.
\end{remark}
\section{Proofs}\label{sec:proofs}
\subsection{Useful analytical and spectral computations}
We start the proofs by deriving useful formulae and introducing suitable notation. First using Girsanov's theorem and then following \cite[p.226]{P95} our first claim expresses $\Eb^{(h)}_0\lbbrbb{e^{-h C_t}}$ in terms of the double exit times for the Brownian motion with zero drift.
\begin{lemma}\label{lem:keyRangeForm}
Let $X$ be a one-dimensional Brownian motion with drift $h>0$. We have that, for any $0<y\leq \infty$,
\begin{equation}\label{eq:Laplace1}
\Eb^{(h)}_0\lbbrbb{e^{-h C_t}\ind{X_t<y}}=e^{-\frac{h^2}{2}t}\IntZeroInf e^{-c} \int_{0}^{c}e^{-a}\Eb_a\lbrb{e^{X_{th^2}}1_{\{\Tc_{\lbrb{0,c}}>th^2\}}\ind{X_{th^2}<hy}}dadc
\end{equation}
\end{lemma}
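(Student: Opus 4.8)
The plan is to first remove the drift by Girsanov's theorem, then rewrite the two boundary factors hidden in $e^{-hC_t}=e^{hm_t}e^{-hM_t}$ as integrals against exponential densities, and finally use Brownian scaling to absorb $h$ into the time parameter; this is in the spirit of the computation of Povel \cite[p.~226]{P95}, carried along with the extra endpoint indicator $\ind{X_t<y}$.

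Recalling that the Radon--Nikodym density of $W^h$ with respect to $W$ on $\Fc_t$ equals $e^{hX_t-\frac{h^2}{2}t}$, we obtain
\[
\Eb^{(h)}_0\lbbrbb{e^{-h C_t}\ind{X_t<y}}=e^{-\frac{h^2}{2}t}\,\Eb_0\lbbrbb{e^{hX_t-hC_t}\ind{X_t<y}},
\]
where now $X$ is a driftless Brownian motion from $0$ and $m_t,M_t,C_t=M_t-m_t$ refer to it. Write $e^{-hC_t}=e^{hm_t}e^{-hM_t}$ and use the elementary identities
\[
e^{hm_t}=\IntZeroInf h e^{-ha}\ind{-a<m_t}\,da,\qquad e^{-hM_t}=\IntZeroInf h e^{-hb}\ind{M_t<b}\,db,
\]
valid because $-m_t\ge 0$ and $M_t\ge 0$. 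Substituting both and interchanging $\Eb_0$ with the double integral by Tonelli's theorem (all integrands are nonnegative), the product $\ind{-a<m_t}\ind{M_t<b}$ is exactly the indicator that the whole path stays in $(-a,b)$, an interval of length $a+b$. Changing variables from $(a,b)$ to $(a,c)$ with $c=a+b$ (Jacobian $1$, range $0<a<c$) turns the density $h^2e^{-ha}e^{-hb}$ into $h^2e^{-hc}$, and translating the path upwards by $a$ makes it a driftless Brownian motion started at $a$ and confined to $(0,c)$, i.e.\ $\{\Tc_{(0,c)}>t\}$, while $e^{hX_t}$ picks up the factor $e^{-ha}$. This gives
\[
\Eb_0\lbbrbb{e^{hX_t-hC_t}\ind{X_t<y}}=\IntZeroInf \int_0^c h^2e^{-hc}e^{-ha}\,\Eb_a\lbrb{e^{hX_t}\ind{X_t<y+a}1_{\{\Tc_{(0,c)}>t\}}}\,da\,dc .
\]

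Finally, apply Brownian scaling: $B_u:=hX_{u/h^2}$ is again a standard Brownian motion, under which confinement to $(0,c)$ on $[0,t]$ becomes confinement to $(0,hc)$ on $[0,th^2]$, the starting point $a$ becomes $ha$, $e^{hX_t}$ becomes $e^{B_{th^2}}$, and the endpoint bound becomes $B_{th^2}<h(y+a)$. Rescaling $c\mapsto c/h$ and $a\mapsto a/h$, the factor $h^2$ is cancelled by the Jacobian $h^{-2}$ and the two densities collapse to $e^{-c}e^{-a}$, which (with the overall $e^{-\frac{h^2}{2}t}$ from Girsanov out in front) yields \eqref{eq:Laplace1}; in the application one takes $y=\infty$, so the precise form of the endpoint indicator is immaterial there. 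There is no deep obstacle: the step requiring genuine care is the bookkeeping in the middle paragraph --- choosing the right pair of exponential representations, performing the change of variables $b\mapsto c=a+b$, and translating the path --- so that every exponential prefactor and Jacobian collapses to exactly $e^{-c}e^{-a}$ while the killing event becomes a clean two-sided exit time $\Tc_{(0,c)}$ of a driftless Brownian motion. The interchange of integration and expectation (Tonelli) and the use of Girsanov and scaling on $C\lbrb{0,t}$ are routine.
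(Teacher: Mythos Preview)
Your proof is correct and follows essentially the same route as the paper---Girsanov, the exponential double-integral representation of $e^{-hC_t}$ (as in \cite[p.~226]{P95}), the change of variables $(a,b)\mapsto(a,c)$ with $c=a+b$, a translation by $a$, and Brownian scaling---only with the scaling step placed last rather than immediately after Girsanov. Your endpoint indicator $\ind{X_{th^2}<hy+a}$ is in fact the correct one after the translation by $a$; the paper's stated $\ind{X_{th^2}<hy}$ is a harmless slip, since (as you note) only $y=\infty$ is ever used downstream.
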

\begin{proof}
Using the Girsanov's theorem and then the scaling property of the Brownian motion we rewrite \eqref{eq:Laplace1} as follows
\begin{equation}\label{eq:Girsanov}
\Eb^{(h)}_0\lbbrbb{e^{-h C_t}\ind{X_t<y}}= e^{-\frac{h^2}{2}t}\Eb\lbbrbb{e^{hX_t-hC_t}\ind{X_t<y}}=e^{-\frac{h^2}{2}t}\Eb_0\lbbrbb{e^{X_{th^2}-C_{th^2}}\ind{X_{th^2}<hy}}.
\end{equation}
As in \cite[p.226]{P95} we re-express the quantity
\begin{equation}\label{eq:keyRangeForm}
e^{-C_t}=\int_{M_t}^{\infty}\int_{m_t}^{\infty}e^{-a-b}dadb=\IntZeroInf\IntZeroInf e^{-a-b}1_{\{\Tc_{(-a,b)}>t\}}dadb,
\end{equation}
where $\Tc_{\lbrb{-a,b}}=\inf\{s\geq 0:B_s\notin(-a,b)\}$. Using this and changing variables $c=a+b,a=a$ we get
\begin{equation*}
\Eb^{(h)}_0\lbbrbb{e^{-h C_t}\ind{X_t<y}}=e^{-\frac{h^2}{2}t}\IntZeroInf e^{-c} \int_{0}^{c}e^{-a}\Eb_a\lbrb{e^{X_{th^2}}1_{\{\Tc_{\lbrb{0,c}}>th^2\}}\ind{X_{th^2}<hy}}dadc.
\end{equation*}
\end{proof}

From \eqref{eq:Laplace1} of  Lemma \ref{lem:keyRangeForm} it is obvious that it suffices to work with the case $h=1$. Before proceeding further we evaluate the quantities involved in Lemma \ref{lem:keyRangeForm} using some tools from spectral theory. In the sequel  we denote by $a\wedge b=\min\{a,b\}$ and $a\vee b=\max\{a,b\}$.
\begin{lemma}
Let $X$ be a one-dimensional Brownian motion with drift $h>0$. Recalling that $\Eb^{(0)}_a\lbbrbb{\cdot}=\Eb_a\lbbrbb{\cdot}$, we have, for any $0<y\leq \infty$,
\begin{align}\label{eq:semigroupExpression1}
\nonumber&e^{-c}\Eb_a\lbbrbb{e^{X_t}1_{\{\Tc_{\lbrb{0,c}}>t\}}\ind{X_t<y}}=\\
\nonumber-&2\sumoneinf \expeigenvaluespecial \eigenfunspecial{a}\frac{\pi j}{\pi^2j^2+c^2}\lbrb{e^{(y\wedge c)-c}\cos\lbrb{\frac{\pi j}{c}(y\wedge c)}-e^{-c}}+\\
&2\sumoneinf \expeigenvaluespecial \eigenfunspecial{a}\frac{ c}{\pi^2j^2+c^2}e^{(y\wedge c)-c}\eigenfunspecial{(y\wedge c)},
\end{align}
In more detail when $y=\infty$ we have that
\begin{align}\label{eq:semigroupExpression}
&e^{-c}\Eb_a\lbbrbb{e^{X_t}1_{\{\Tc_{\lbrb{0,c}}>t\}}}=2\sumoneinf \minusone{j+1}\frac{\pi j}{\pi^2j^2+c^2}\expeigenvaluespecial \eigenfunspecial{a}\lbrb{1-\minusone{j}e^{-c}},
\end{align}
and therefore
\begin{align}\label{eq:LaplaceExpression}
&\int_{0}^{c}e^{-a}e^{-c}\Eb_a\lbbrbb{e^{X_t}1_{\{\Tc_{\lbrb{0,c}}>t\}}}da=2c\sumoneinf \minusone{j+1}\frac{\pi^2 j^2}{\lbrb{\pi^2 j^2+c^2}^2}\expeigenvaluespecial\lbrb{1-\minusone{j}e^{-c}}^2,
\end{align}
which implies that
\begin{equation}\label{eq:Laplace2}
\Eb^{(1)}_0\lbbrbb{e^{-C_t}}=e^{-\frac{1}{2}t}\IntZeroInf 2c\sumoneinf \minusone{j+1}\frac{\pi^2 j^2}{\lbrb{\pi^2 j^2+c^2}^2}\expeigenvaluespecial\lbrb{1-\minusone{j}e^{-c}}^2dc.
\end{equation}
\end{lemma}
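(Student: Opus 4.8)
The plan is to compute the spectral (eigenfunction) expansion of the killed-and-weighted semigroup $e^{-c}\Eb_a\lbbrbb{e^{X_t}1_{\{\Tc_{\lbrb{0,c}}>t\}}\ind{X_t<y}}$ directly, then integrate in $a$ and $c$. First I would recall that the Dirichlet Laplacian $\tfrac12\tfrac{d^2}{dx^2}$ on the interval $(0,c)$ has orthonormal eigenfunctions $\sqrt{2/c}\,\sin(\pi j x/c)$ with eigenvalues $-\pi^2 j^2/(2c^2)$, so that the transition density of Brownian motion killed on exiting $(0,c)$ is $p^c_t(a,x)=\tfrac{2}{c}\sumoneinf \expeigenvaluespecial \sin(\pi j a/c)\sin(\pi j x/c)$. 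Hence the left-hand side of \eqref{eq:semigroupExpression1} equals $e^{-c}\int_0^{c\wedge y} e^{x} p^c_t(a,x)\,dx = \tfrac{2}{c}e^{-c}\sumoneinf \expeigenvaluespecial \sin(\pi j a/c)\int_0^{c\wedge y} e^x\sin(\pi j x/c)\,dx$.

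The one routine ingredient is the elementary antiderivative $\int e^x\sin(\beta x)\,dx = \tfrac{e^x}{1+\beta^2}\bigl(\sin(\beta x)-\beta\cos(\beta x)\bigr)$ with $\beta=\pi j/c$, so $1+\beta^2 = (\pi^2 j^2+c^2)/c^2$. Evaluating between $0$ and $c\wedge y$ and multiplying through by $\tfrac2c e^{-c}$ produces exactly the two sums on the right-hand side of \eqref{eq:semigroupExpression1}: the term with $\cos(\tfrac{\pi j}{c}(y\wedge c))$ and the boundary term $-e^{-c}$ come from the $-\beta\cos(\beta x)$ part (with the $c/(\pi^2 j^2+c^2)$ recombining appropriately), while the $\sin(\tfrac{\pi j}{c}(y\wedge c))$ term comes from the $\sin(\beta x)$ part; the lower limit $x=0$ contributes only through the cosine, giving the $-e^{-c}$ inside the first bracket. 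Specializing $y=\infty$ means $y\wedge c = c$, so $\cos(\pi j)=\minusone j$, $\sin(\pi j)=0$, and the second sum vanishes; collecting the surviving terms and factoring gives \eqref{eq:semigroupExpression}. Then \eqref{eq:LaplaceExpression} follows by computing $\int_0^c e^{-a}\sin(\pi j a/c)\,da$ by the same antiderivative trick — this integral equals $\tfrac{c^2}{\pi^2 j^2 + c^2}\bigl(\tfrac{\pi j}{c}\bigr)\bigl(1-\minusone j e^{-c}\bigr) = \tfrac{\pi j c}{\pi^2 j^2+c^2}\bigl(1-\minusone j e^{-c}\bigr)$ — which when multiplied by the coefficient $2\minusone{j+1}\tfrac{\pi j}{\pi^2 j^2+c^2}$ from \eqref{eq:semigroupExpression} yields the $\tfrac{\pi^2 j^2}{(\pi^2 j^2+c^2)^2}$ coefficient and the squared bracket $(1-\minusone j e^{-c})^2$. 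Finally \eqref{eq:Laplace2} is just \eqref{eq:LaplaceExpression} substituted into \eqref{eq:Laplace1} of Lemma \ref{lem:keyRangeForm} with $y=\infty$, $h=1$, and $t$ replaced by $th^2=t$.

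The only genuine subtlety — and where I would spend care rather than effort — is justifying the interchange of the infinite sum with the integrals $\int_0^{c\wedge y}(\cdot)\,dx$ and later $\int_0^c e^{-a}(\cdot)\,da$, and the termwise validity of the eigenfunction expansion of the killed semigroup. Since the eigenvalue expansion $\tfrac2c\sumoneinf \expeigenvaluespecial\sin(\pi j a/c)\sin(\pi j x/c)$ converges absolutely and uniformly on compact subsets of $(0,\infty)$ in $a,x$ for each fixed $t>0$ (the factor $\expeigenvaluespecial$ decays faster than any power in $j$), Fubini/Tonelli applies without issue on the bounded integration domains, and the manipulations are fully rigorous for $t>0$. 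I would state this once and then carry out the computations as above.
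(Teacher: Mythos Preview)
Your proposal is correct and follows essentially the same route as the paper: both expand the killed transition density via the Dirichlet eigenfunctions on $(0,c)$, evaluate $\int_0^{y\wedge c} e^x\sin(\pi j x/c)\,dx$ and $\int_0^c e^{-a}\sin(\pi j a/c)\,da$ by the same elementary antiderivative, and justify Fubini via the uniform convergence of the eigenfunction series for fixed $t>0$. The only cosmetic difference is that the paper records the two integral identities as displayed formulas \eqref{eq:integral1} and \eqref{eq:integral2}, whereas you package them as one antiderivative; the content is identical.
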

\begin{proof}
 The semigroup of Brownian motion with zero drift killed at the double exit time $\Tc_{\lbrb{0,c}}=\inf\{s\geq 0:B_s\notin(0,c)\}$ is a compact selfadjoint semigroup and the transition density has the following eigenfunction expansion
\begin{equation}\label{eq:spectralExpansion}
p^{(0,c)}_t(x,y)=p^c_t(x,y)= \frac{2}{c}\sumoneinf \expeigenvaluespecial  \eigenfunction{\eigenvaluespecialRoot x}\eigenfunction{\eigenvaluespecialRoot y},
\end{equation}
for all $x,y\in\lbrb{0,c}$, where $\lambda_j=-\eigenvaluespecial,\,j\geq 1$, are the eigenvalues and $\frac{\sqrt{2}}{\sqrt{c}}\eigenfunspecial{x},\,j\geq 1, x\in(0,c)$, are the normalized eigenfunctions of the operator $\Delta =\frac{1}{2}\frac{d^2}{dx^2}$ with vanishing boundary conditions at $0$ and $c$.
Using \eqref{eq:spectralExpansion} we then easily get upon integration that
\begin{align*}
&e^{-c}\Eb_a\lbrb{e^{X_t}1_{\{\Tc_{\lbrb{0,c}}>t\}}\ind{X_t<y}}=
\frac{2}{c}\sumoneinf \expeigenvaluespecial  \eigenfunction{\eigenvaluespecialRoot a}\int_{0}^{y\wedge c}\eigenfunction{\eigenvaluespecialRoot x}e^xdx.
\end{align*}
Employing the identity 
\begin{equation}\label{eq:integral1}
\int_{0}^{v}\eigenfunspecial{x}e^{x}dx=-\frac{c\pi j}{\pi^2j^2+c^2}\lbrb{e^v\cos\lbrb{\frac{\pi j}{c}v}-1}+\frac{c^2}{\pi^2j^2+c^2}e^{v}\eigenfunspecial{v},
\end{equation}
with $v\leq c$, we derive immediately \eqref{eq:semigroupExpression1} and, plugging $y=\infty$ in \eqref{eq:semigroupExpression1} then \eqref{eq:semigroupExpression} follows.
Using \eqref{eq:semigroupExpression} we can compute that
\begin{align*}
&\int_{0}^{c}e^{-a}e^{-c}\Eb_a\lbrb{e^{X_t}1_{\{\Tc_{\lbrb{0,c}}>t\}}}da=2c\sumoneinf \minusone{j+1}\frac{\pi^2 j^2}{\lbrb{\pi^2 j^2+c^2}^2}\expeigenvaluespecial\lbrb{1-\minusone{j}e^{-c}}^2,
\end{align*}
where we have used 
\begin{equation}\label{eq:integral2}
\int_{0}^{v}\eigenfunspecial{x}e^{-x}dx=-\frac{c\pi j}{\pi^2j^2+c^2}\lbrb{e^{-v}\cos\lbrb{\frac{\pi j}{c}v}-1}-\frac{c^2}{\pi^2j^2+c^2}e^{-v}\eigenfunspecial{v}
\end{equation}
with $v=c$ and the application of the Fubini theorem is immediate since the series \eqref{eq:semigroupExpression} is clearly uniformly convergent for $a\in\lbbrbb{0,c}$, for any fixed $c,t>0$. This is precisely \eqref{eq:LaplaceExpression}.\\
Finally, \eqref{eq:Laplace2} follows by substitution in \eqref{eq:Laplace1} of \eqref{eq:LaplaceExpression} with $h=1$. This completes the proof.
\end{proof}

\subsection{Proof of Lemma \ref{lem:Asympt}}\label{subsec:Lem2.1}
\begin{proof}
To prove Lemma \ref{lem:Asympt} we rewrite \eqref{eq:Laplace2} as follows: first thanks to \eqref{eq:Laplace1} we work with $h=1$ and we change variables $c^2\mapsto w, w\mapsto 1/u, u \mapsto v/t$ to get
\begin{align*}
&\Eb^{(1)}_0\lbbrbb{e^{-C_t}}=e^{-\frac{1}{2}t}\int_{0}^{\infty}2c\sumoneinf \minusone{j+1}\frac{\pi^2 j^2}{\lbrb{\pi^2 j^2+c^2}^2}\expeigenvaluespecial\lbrb{1-\minusone{j}e^{-c}}^2dc=\\
&e^{-\frac{1}{2}t}\int_{0}^{\infty}\sumoneinf \minusone{j+1}\frac{\pi^2 j^2}{\lbrb{\pi^2 j^2u+1}^2} \expeigenvalue{ut}\lbrb{1-\minusone{j}e^{-u^{-1/2}}}^2du=\\
&e^{-\frac{1}{2}t}\frac{1}{t}\int_{0}^{\infty}\sumoneinf \minusone{j+1}\frac{\pi^2 j^2}{\lbrb{\pi^2 j^2\frac{v}{t}+1}^2} \expeigenvalue{v}\lbrb{1-\minusone{j}e^{-t^{1/2}v^{-1/2}}}^2dv.
\end{align*}
We then write
\[I_1(t):=\int_{0}^{\infty}\sumoneinf \minusone{j+1}\frac{\pi^2 j^2}{\lbrb{\pi^2 j^2\frac{v}{t}+1}^2}
  \expeigenvalue{v}dv=\IntZeroInf F(v,t)dv,\]
  with the following definition of the integrand
  \begin{equation}\label{eq:FF}
  F(v,t)=\sumoneinf \minusone{j+1}\frac{\pi^2 j^2}{\lbrb{\pi^2 j^2\frac{v}{t}+1}^2}
    \expeigenvalue{v}
  \end{equation}
  and we put
  \[I_2(t):=\int_{0}^{\infty}\sumoneinf \minusone{j+1}\frac{\pi^2 j^2}{\lbrb{\pi^2 j^2\frac{v}{t}+1}^2}
      \expeigenvalue{v}\lbrb{-2\minusone{j}e^{-t^{1/2}v^{-1/2}}+e^{-2t^{1/2}v^{-1/2}}}dv.\]
 Note that immediately then we get that
 \begin{equation}\label{eq:Laplace3}
 \Eb^{(1)}_0\lbbrbb{e^{-C_t}}=e^{-\frac{1}{2}t}\frac{1}{t}\lbrb{I_1(t)+I_2(t)}.
 \end{equation}  
 \textbf{Study of $I_1(t)$:} From section \ref{sec:FF}, see \eqref{eq:IntFAsympExpansion}, we deduce that, for any $n\in\Nb^+$,
 \begin{equation}\label{eq:I1}
e^{-\frac{1}{2}t}\frac{1}{t} I_1(t)=e^{-\frac{1}{2}t}\frac{1}{t}\IntZeroInf F(v,t)dt= e^{-\frac{1}{2}t}\lbrb{\frac{1}{t}+\sum_{l=1}^{n}\minusone{l}\frac{2^{1}\lbrb{l+1}!}{t^{l+1}}+o\lbrb{\frac{1}{t^{n+1}}}}.
 \end{equation}   
\textbf{Study of $I_2(t)$:}
Let $A(t)=o(t)$, $A(t)\uparrow\infty$ and $t>1$. Estimating from above the function under the integral of $I_2(t)$ we get that
\begin{align*}
&\labsrabs{I_2(t)}=\labsrabs{\int_{0}^{\infty}\sumoneinf \minusone{j+1}\frac{\pi^2 j^2}{\lbrb{\pi^2 j^2\frac{v}{t}+1}^2}
      \expeigenvalue{v}\lbrb{-2\minusone{j}e^{-t^{1/2}v^{-1/2}}+e^{-2t^{1/2}v^{-1/2}}}dv}\leq\\
&4\int_{0}^{A(t)}\sumoneinf \pij \expeigenvalue{v}e^{-\frac{\sqrt{t}}{\sqrt{v}}}dv+4\int_{A(t)}^{\infty}\sumoneinf \pij \expeigenvalue{v}dv\leq\\
&4e^{-\sqrt{\frac{t-1}{A(t)}}}\int_{0}^{A(t)}\sumoneinf \pij e^{-\frac{\pij}{4}v}e^{-\frac{1}{\sqrt{v}}}dv +4e^{-\frac{A(t)}{4}} \int_{A(t)}^{\infty}\sumoneinf \pij e^{-\frac{\pij}{4}v}dv\leq\\
&4\max\left\{e^{-\sqrt{\frac{t-1}{A(t)}}};e^{-\frac{A(t)}{4}}\right\}\lbrb{\int_{0}^{\infty}\sumoneinf \pij e^{-\frac{\pij}{4}v}e^{-\frac{1}{\sqrt{v}}}dv+\int_{1}^{\infty}\sumoneinf \pij e^{-\frac{\pij}{4}v}dv}=\\
&4\max\lbcurlyrbcurly{e^{-\sqrt{\frac{t-1}{A(t)}}};e^{-\frac{A(t)}{4}}}\lbrb{-\frac{1}{2}\int_{0}^{\infty}G'\lbrb{\frac{v}{2},0}e^{-\frac{1}{\sqrt{v}}}dv-\frac{1}{2}\int_{1}^{\infty}G'\lbrb{\frac{v}{2},0}dv},
\end{align*}
where the function $G'(\frac{v}{2},0)$ under the integral is computed from \eqref{eq:F} of section \ref{sec:PS}.
Clearly  then the asymptotic relation \eqref{eq:GsmallAsymp1}, which yields $\labsrabs{G'\lbrb{\frac{v}{2},0}}\sim \frac{4}{\sqrt{\pi}v^{3/2}}$, when $v\to 0$, and an obvious computation
\[\int_{1}^{\infty}\sumoneinf \pij e^{-\frac{\pij}{4}v}dv=4\sum_{j=1}^{\infty}e^{-\pij}<\infty\]
 imply that
\[\int_{0}^{\infty}\labsrabs{G'\lbrb{\frac{v}{2},0}}e^{-\frac{1}{\sqrt{v}}}dv+\int_{1}^{\infty}\labsrabs{G'\lbrb{\frac{v}{2},0}}dv<\infty\]
and thus, for any $n\in\Nb^+$,
\begin{equation}\label{eq:lemI_2Ingredient2}
e^{\frac{1}{2}t}I_2(t)=o\lbrb{\frac{1}{t^n}}
\end{equation}
Therefore using \eqref{eq:lemI_2Ingredient2} and \eqref{eq:I1} in \eqref{eq:Laplace3} lead to our claim \eqref{eq:Asympt} where we just recall that when $h\neq 1$ we use relation \eqref{eq:Laplace1} which is basically a rescaling of the time.
 \end{proof}     
\subsection{Proof of Theorem \ref{lem:minimumLaw}}

The proof of Theorem \ref{lem:minimumLaw} and subsequent results hinge on the following key result.
\begin{lemma}\label{lem:Density}
Let $a>0$. We have that
\begin{equation}\label{eq:lemDensity}
\lim\ttinf{t}t\int_{a}^{\infty} e^{-c}\Eb_a\lbbrbb{e^{X_t}1_{\{\Tc_{(0,c)}>t\}}}dc=a.
\end{equation}
Moreover we have uniform convergence, namely
\begin{equation}\label{eq:lemDensity1}
\lim\ttinf{t}\sup_{b\leq a}\labsrabs{t\int_{b}^{\infty} e^{-c}\Eb_b\lbbrbb{e^{X_t}1_{\{\Tc_{(0,c)}>t\}}}dc-b}=0.
\end{equation}
\end{lemma}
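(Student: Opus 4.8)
\emph{Proof strategy.} The plan is to insert the eigenfunction expansion \eqref{eq:semigroupExpression} and extract the limit by a rescaling that removes the time dependence from the exponentials, exactly as in the proof of Lemma~\ref{lem:Asympt}. For $b\ge 0$ put
\begin{equation*}
\Ic_b(t):=t\int_b^\infty e^{-c}\,\Eb_b\lbbrbb{e^{X_t}1_{\{\Tc_{(0,c)}>t\}}}\,dc=2t\int_b^\infty\sumoneinf\minusone{j+1}\frac{\pi j}{\pi^2j^2+c^2}\,e^{-\frac{\pi^2j^2}{2c^2}t}\,\eigenfunction{\frac{\pi j b}{c}}\lbrb{1-\minusone{j}e^{-c}}\,dc .
\end{equation*}
The relevant scale being $c\asymp\sqrt t$, I substitute $c=\sqrt t\,s$; the key point is that then $e^{-\pi^2j^2t/(2c^2)}=e^{-\pi^2j^2/(2s^2)}$ no longer depends on $t$. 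Writing the rescaled integrand as $\mathcal J_t(s)\ge 0$ (nonnegativity is clear, the original integrand being an expectation of a nonnegative quantity), so that $\Ic_b(t)=\int_0^\infty\mathcal J_t(s)\,ds$, one checks from $\eigenfunction{\pi j b/(\sqrt t s)}\sim\pi j b/(\sqrt t s)$, $\pi j/(\pi^2j^2+ts^2)\sim\pi j/(ts^2)$ and $1-\minusone{j}e^{-\sqrt t s}\to 1$ that $\mathcal J_t(s)\to\mathcal J_\infty(s):=\frac{2\pi^2 b}{s^3}\sumoneinf\minusone{j+1}j^2 e^{-\pi^2j^2/(2s^2)}$ for each $s>0$.

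To identify $\int_0^\infty\mathcal J_\infty(s)\,ds$ I substitute $w=1/(2s^2)$, turning it into $2\pi^2 b\int_0^\infty g(w)\,dw$ with $g(w)=\sumoneinf\minusone{j+1}j^2 e^{-\pi^2j^2w}=-\pi^{-2}\varphi'(w)$, $\varphi(w):=\sumoneinf\minusone{j+1}e^{-\pi^2j^2w}$. Now $\varphi(\infty)=0$, and by Poisson summation (the identity underlying the function $G$ of \eqref{eq:F}; see section~\ref{sec:PS}) one has $1-2\varphi(w)=\sum_{j\in\Zb}\minusone{j}e^{-\pi^2j^2w}=(\pi w)^{-1/2}\sum_{k\in\Zb}e^{-(k+1/2)^2/w}\to 0$ as $w\downarrow 0$, whence $\varphi(0^+)=\tfrac12$ and $\int_0^\infty g(w)\,dw=\pi^{-2}\lbrb{\varphi(0^+)-\varphi(\infty)}=\tfrac1{2\pi^2}$. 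Thus $\int_0^\infty\mathcal J_\infty(s)\,ds=b$, the value claimed in \eqref{eq:lemDensity}; the bound $\labsrabs{G'\lbrb{v/2,0}}\sim 4/(\sqrt\pi\,v^{3/2})$ of \eqref{eq:GsmallAsymp1}, or rather its counterpart here, ensures $g\in L^1(0,\infty)$ so that these steps are legitimate.

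The remaining, and only delicate, point is to upgrade $\mathcal J_t\to\mathcal J_\infty$ to convergence of integrals. This \emph{cannot} be done by a termwise bound, since the natural estimate $\labsrabs{\mathcal J_t(s)}\le\frac{2\pi^2 b}{s^3}\sumoneinf j^2 e^{-\pi^2j^2/(2s^2)}$ has a right-hand side that tends to a positive constant as $s\to\infty$ (equivalently $c/\sqrt t\to\infty$) and is not integrable there: precisely in that regime the eigenfunction series converges slowly and only the cancellation of the alternating signs keeps $\mathcal J_\infty$ integrable. I would therefore split $\int_0^\infty=\int_0^K+\int_K^\infty$ with $K$ large and fixed. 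On $[0,K]$ the displayed bound \emph{is} integrable (the factor $e^{-\pi^2/(2s^2)}$ handles $s\downarrow 0$, and it is bounded on $[1,K]$), so dominated convergence gives $\int_0^K\mathcal J_t\to\int_0^K\mathcal J_\infty$; since $\mathcal J_t\ge 0$, Fatou then already yields $\liminf_{t\to\infty}\Ic_b(t)\ge\int_0^\infty\mathcal J_\infty=b$. For the matching upper bound it remains to show that the tail $\int_K^\infty\mathcal J_t(s)\,ds=t\int_{K\sqrt t}^\infty e^{-c}\Eb_b\lbbrbb{e^{X_t}1_{\{\Tc_{(0,c)}>t\}}}dc$ tends to $0$ as $K\to\infty$, uniformly for $t$ large. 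Here I would either keep the alternating series and control it by a summation-by-parts argument, or --- more cleanly --- pass to the Poisson dual, the method-of-images representation $p_t^{(0,c)}(b,z)=\sum_{n\in\Zb}\lbrb{p_t(b,z+2nc)-p_t(b,-z+2nc)}$, which converges geometrically when $t/c^2$ is small, and bound $\int_0^c e^{z-c}p_t^{(0,c)}(b,z)\,dz$ by Gaussian estimates, using crucially that the killed density vanishes at the upper boundary $z=c$ (the same cancellation, in dual form). This tail bound is the main obstacle; it is of the same nature as, and can be handled by the techniques developed for, the functions $F$ and $G$ in sections~\ref{sec:FF} and \ref{sec:PS} (and parallels the splitting used for $I_1(t),I_2(t)$ in the proof of Lemma~\ref{lem:Asympt}). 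Combining the two bounds gives \eqref{eq:lemDensity}.

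Finally, for \eqref{eq:lemDensity1} I would observe that the estimates above depend on $b$ only through prefactors bounded by $a$: the dominating function on $[0,K]$ is increasing in $b$ and majorised by its value at $b=a$, the tail bound is uniform for $b\in[0,a]$, and the rate of convergence $\mathcal J_t\to\mathcal J_\infty$ is controlled by quantities such as $\sup_{0\le x\le a/c}\labsrabs{\eigenfunction{\pi j x}-\pi j x}$ which are monotone in $a$. Together with the continuity (indeed linearity) of the limit $b\mapsto b$, this makes the convergence uniform on the compact interval $[0,a]$, which is precisely \eqref{eq:lemDensity1}.
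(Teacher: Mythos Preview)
Your strategy is sound and the limit identification is correct, but the route differs from the paper's in one essential respect, and that difference is exactly where the work hides.

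Both you and the paper start from the eigenfunction expansion \eqref{eq:semigroupExpression} and rescale so that the Gaussian exponentials no longer depend on $t$ (your $s=c/\sqrt t$ is the paper's $u=t/c^2$ via $u=1/s^2$). The paper then splits off the $(-1)^je^{-c}$ piece as a separate term $J_2$ (handled trivially in Lemma~\ref{lem:J2}) and, for the main piece $J_1$, \emph{differentiates in the starting point} $a$. This replaces the sine by a cosine, and the resulting cosine series is rewritten as $\partial\Gamma/\partial\gamma$; Proposition~\ref{prop:unifromIntegrability} then supplies a single $t$--independent integrable majorant $f(u)$ over the whole range $u\in(0,\infty)$, so dominated convergence applies directly and yields both \eqref{eq:lemDensity} and, since the majorant is uniform in $b\le a$, the uniform statement \eqref{eq:lemDensity1}. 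The tail $u\to 0$ (your $s\to\infty$) is absorbed into the majorant through the Poisson--summed form \eqref{eq:derivativeGamma1} of $\partial\Gamma/\partial\gamma$; no separate tail argument is needed.

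Your route keeps the sines and attacks $\mathcal J_t$ directly. On $[0,K]$ the termwise bound works, as you say. The tail is the real issue, and here your sketch is right in spirit but underestimates the work. Bounding $p_t^{(0,c)}(b,\cdot)$ by the one--sided density $q_t(b,\cdot)=p_t(\cdot-b)-p_t(\cdot+b)$ (the $n=0$ image term alone) is \emph{not} enough: if one does that, then switching the order of integration gives
\[
t\int_{K\sqrt t}^{\infty}\int_0^{c}e^{z-c}q_t(b,z)\,dz\,dc
= t\int_0^{K\sqrt t}q_t(b,z)e^{z-K\sqrt t}\,dz
+ t\int_{K\sqrt t}^{\infty}q_t(b,z)\,dz,
\]
and the second summand is of order $t\,\overline\Phi(K)$, which diverges with $t$. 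One must retain the first reflection at $c$ (the $n=1$ image), i.e.\ use $p_t^{(0,c)}(b,c-w)\approx q_t(b,c-w)-q_t(b,c+w)\sim -2w\,\partial_z q_t(b,c)$ for $w=c-z$ of order one; only this cancellation brings the extra factor $t^{-1/2}$ that makes $\mathcal J_t(s)$ bounded (indeed $\sim\tfrac{2\sqrt2\,b}{\sqrt\pi}\,s^2e^{-s^2/2}$) uniformly in $t$ for large $s$. This is doable, and your identification of the vanishing at $z=c$ as the crucial ingredient is exactly right, but it is a genuine computation rather than a remark. The paper's differentiate--in--$a$ device is precisely a way of building this cancellation into the series from the outset, which is why it yields a global majorant without a tail split.

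Your argument for uniformity in $b\in[0,a]$ is fine once the majorants are in place.
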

\begin{proof}
From \eqref{eq:semigroupExpression} we get that
\begin{align*}
&\int_{a}^{\infty} e^{-c}\Eb_a\lbbrbb{e^{X_t}1_{\{\Tc_{\lbrb{0,c}}>t\}}}dc=\\
&2\int_{a}^{\infty}\sumoneinf \minusone{j+1}\frac{\pi j}{\pi^2j^2+c^2}\expeigenvaluespecial \eigenfunspecial{a}\lbrb{1-\minusone{j}e^{-c}}dc.
\end{align*}
Changing variables $u:=t/c^2$ we get
\begin{align}\label{eq:lemDensityToBeProved}
\nonumber&t\int_{a}^{\infty} e^{-c}\Eb_a\lbbrbb{e^{X_t}1_{\{\Tc_{\lbrb{0,c}}>t\}}}dc=\\
\nonumber&\frac{t}{\sqrt{t}}\int_{0}^{\frac{t}{a^2}} \frac{1}{\sqrt{u}}\sumoneinf \minusone{j+1}\frac{\pi j}{\pi^2j^2\frac{u}{t}+1}\expeigenvalue{u} \eigenfunction{\pi j a \frac{\sqrt{u}}{\sqrt{t}}}\lbrb{1-\minusone{j}e^{-\frac{\sqrt{t}}{\sqrt{u}}}}du=\\
&tJ_1(t,a)+tJ_2(t,a),
\end{align}
where \eqref{eq:J_1General} and \eqref{eq:J_2Recall} with $\nu=\infty$ are the expressions of $J_1(t,a), J_2(t,a)$ given by
\begin{align}\label{eq:J_1}
\nonumber &J_1(t,a):=\frac{1}{\sqrt{t}}\int_{0}^{\frac{t}{a^2}} \frac{1}{\sqrt{u}}\sumoneinf \minusone{j+1}\frac{\pi j}{\pi^2j^2\frac{u}{t}+1}\expeigenvalue{u} \eigenfunction{\pi j a \frac{\sqrt{u}}{\sqrt{t}}}du=\\
&\frac{1}{\sqrt{t}}\int_{0}^{\frac{t}{a^2}}\frac{1}{\sqrt{u}}H\lbrb{a,u,\frac{1}{t},1,\frac{\sqrt{u}}{\sqrt{t}},1}du,
\end{align}
\begin{align}\label{eq:J_2}
\nonumber&J_2(t,a):=\frac{1}{\sqrt{t}}\int_{0}^{\frac{t}{a^2}} \frac{1}{\sqrt{u}}\sumoneinf \frac{\pi j}{\pi^2j^2\frac{u}{t}+1}\expeigenvalue{u} \eigenfunction{\pi j a \frac{\sqrt{u}}{\sqrt{t}}}e^{-\frac{\sqrt{t}}{\sqrt{u}}}du=\\
&\frac{1}{\sqrt{t}}\int_{0}^{\frac{t}{a^2}}\frac{1}{\sqrt{u}}H\lbrb{a,u,\frac{1}{t},1,\frac{\sqrt{u}}{\sqrt{t}},0}du
\end{align}
Note that  for $s\in\{0,1\}$ the function $H$ is defined by
\begin{equation}\label{eq:H}
H\lbrb{a,u,\rho,\gamma,h,s}=(-1)^s\sumoneinf \frac{\pi j}{\pi^2j^2u\rho+1}\expeigenvalue{u\gamma}\sin\lbrb{\pi js+\pi j a h}.
\end{equation}
Relations \eqref{eq:lemDensity} and \eqref{eq:lemDensity1} follow from the representation \eqref{eq:lemDensityToBeProved}, the application Lemma \ref{lem:J_1} with $\nu=\infty$  ( yielding $tJ_1(t,a)\to a$ uniformly on $a-$compact intervals ) and Lemma \ref{lem:J2}  with $\nu=\infty$ ( yielding $tJ_2(t,a)=o(1)$ uniformly on $a\geq 0$ ) 
and the fact that $G\lbrb{0,\frac{1}{2}}=-1$, see \eqref{eq:FPoisson} for more detail.
\end{proof}
Our next lemma improves the result above in a sense that it allows to truncate the integral from $\ln(t)$. This will be useful when we wish to remove the dependence on $a$ at the lower limit of the integrals in \eqref{eq:lemDensity} and \eqref{eq:lemDensity1}.
\begin{lemma}\label{lem:improvedConvergence}
As $t\to\infty$,
\begin{align}\label{eq:improvedConvergence}
\nonumber&\sup_{a\leq \ln(t)}t\labsrabs{\int_{\ln(t)}^{\infty}e^{-c}\Eb_a\lbbrbb{e^{X_t}\Tcc{c}{t}}dc-a}=\\
&\sup_{a\leq \ln(t)}t\labsrabs{\int_{a}^{\infty}e^{-c}\Eb_a\lbbrbb{e^{X_t}\Tcc{c}{t}}dc-a}+o\lbrb{e^{-\frac{\pi t}{4\ln^2(t)}}}
\end{align}
\end{lemma}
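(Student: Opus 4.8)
The statement to prove is Lemma~\ref{lem:improvedConvergence}, which asserts that replacing the lower endpoint $a$ in the integral $t\int_{a}^{\infty}e^{-c}\Eb_a[e^{X_t}\Tcc{c}{t}]\,dc$ by the fixed endpoint $\ln(t)$ costs only an error of order $o(e^{-\pi t/(4\ln^2 t)})$, uniformly over $a\leq \ln(t)$. The plan is simply to estimate the ``missing piece''
\[
R(t,a):=t\int_{a}^{\ln(t)}e^{-c}\Eb_a\lbbrbb{e^{X_t}\Tcc{c}{t}}\,dc,
\]
show it is $o(e^{-\pi t/(4\ln^2 t)})$ uniformly in $a\leq\ln(t)$, and then add and subtract it, noting that the difference of the two quantities in \eqref{eq:improvedConvergence} is exactly $R(t,a)$ (with sign), so the triangle inequality gives the claim.

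**Key steps.** First I would plug in the spectral expansion \eqref{eq:semigroupExpression}, which gives
\[
e^{-c}\Eb_a\lbbrbb{e^{X_t}\Tcc{c}{t}}=2\sumoneinf\minusone{j+1}\frac{\pi j}{\pi^2j^2+c^2}\expeigenvaluespecial\eigenfunspecial{a}\lbrb{1-\minusone{j}e^{-c}}.
\]
Taking absolute values inside the sum and using $|\eigenfunspecial{a}|\leq 1$, $|1-\minusone{j}e^{-c}|\leq 2$, and $\pi j/(\pi^2j^2+c^2)\leq 1/(\pi j)$, the integrand is bounded by $C\sumoneinf \expeigenvaluespecial$ on the range $c\in[a,\ln t]\subseteq(0,\ln t]$. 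The crucial gain comes from the exponential factor: for $c\leq \ln(t)$ one has $\eigenvaluespecial \cdot t = \frac{\pi^2 j^2 t}{2c^2}\geq \frac{\pi^2 j^2 t}{2\ln^2 t}$, so $\expeigenvaluespecial \leq e^{-\pi^2 j^2 t/(2\ln^2 t)}$. Hence
\[
|R(t,a)|\leq C\,t\int_{a}^{\ln t}\sumoneinf e^{-\pi^2 j^2 t/(2\ln^2 t)}\,dc\leq C\,t\ln(t)\sumoneinf e^{-\pi^2 j^2 t/(2\ln^2 t)}.
\]
The $j=1$ term dominates the series; comparing with a geometric sum one gets the bound $\leq C\,t\ln(t)\,e^{-\pi^2 t/(2\ln^2 t)}$, and since $\pi^2/2 > \pi/4$, the polynomial prefactor $t\ln t$ is absorbed and $R(t,a)=o(e^{-\pi t/(4\ln^2 t)})$ uniformly in $a\leq\ln(t)$, as required. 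The identity \eqref{eq:improvedConvergence} then follows by writing $\int_{\ln t}^\infty = \int_a^\infty - \int_a^{\ln t}$ and applying the triangle inequality together with the definition of $o(\cdot)$.

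**Main obstacle.** There is no deep obstacle here; the lemma is essentially a decay estimate exploiting the fact that the Dirichlet heat kernel on $(0,c)$ with $c$ small (here $c\leq\ln t$) has a spectral gap $\sim 1/c^2$ that blows up. The only point requiring a little care is making the bound genuinely uniform in $a$: one must ensure that every estimate on the integrand (in particular $|\eigenfunspecial{a}|\leq 1$ and the termwise bound on the series) holds independently of $a$, which it does since $a$ enters only through the bounded sine factor and through the lower limit of integration, and enlarging the integration interval to $(0,\ln t)$ only loses a harmless factor. One should also double-check the constant in the exponent: we need the exponential rate $\pi^2 t/(2\ln^2 t)$ from the $j=1$ term to comfortably beat the target rate $\pi t/(4\ln^2 t)$, which holds with room to spare, so the summability of the $j$-series and the absorption of the prefactor present no difficulty.
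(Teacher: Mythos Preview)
Your proposal is correct and follows essentially the same route as the paper: both arguments bound the ``missing piece'' $R(t,a)=t\int_a^{\ln t}e^{-c}\Eb_a[e^{X_t}\Tcc{c}{t}]\,dc$ via the spectral expansion and exploit that for $c\leq\ln t$ the leading eigenvalue $\pi^2/(2c^2)$ forces decay like $e^{-\pi^2 t/(2\ln^2 t)}$, which swallows the $t\ln t$ prefactor and beats the target rate $e^{-\pi t/(4\ln^2 t)}$. The only cosmetic difference is that the paper first uses the crude pointwise bound $e^{X_t}\leq e^c$ on $\{\Tc_{(0,c)}>t\}$ and then estimates the survival probability $\Pb_a(\Tc_{(0,c)}>t)$ spectrally, whereas you plug directly into the explicit formula \eqref{eq:semigroupExpression}; both yield the same exponential rate and the same conclusion.
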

\begin{proof}
We observe that from the spectral expansion \eqref{eq:spectralExpansion}
\begin{align*}
\sup_{a\in(0,c)}\Pb_a\lbrb{\Tc_{(0,c)}>t}\leq \frac{\sqrt{2}}{\sqrt{c}}\sumoneinf \expeigenvaluespecial.
\end{align*}
Now we derive an easy estimate by splitting the supremum over $(0,c)$ and giving appropriate estimates for the cases $c\in(0,1)$ and $c\in(1,\ln(t))$
\begin{align*}
&\sup_{0\leq c\leq \ln(t)}\sup_{a\in(0,c)}\Pb_a\lbrb{\Tc_{(0,c)}>t}\leq \sup_{0\leq c\leq \ln(t)}\frac{\sqrt{2}}{\sqrt{c}}\sumoneinf e^{-\frac{\pi j}{2c^2}t}=\sup_{0\leq c\leq \ln(t)}\frac{\sqrt{2}}{\sqrt{c}}\frac{e^{-\frac{\pi}{2c^2}t}}{1-e^{-\frac{\pi}{2c^2}t}}\leq\\
&\sup_{0\leq c\leq \ln(t)}\frac{\sqrt{2}}{\sqrt{c}}\frac{e^{-\frac{\pi }{2c^2}t}}{1-e^{-\frac{\pi }{2\ln^2(t)}t}}\leq C\max\lbcurlyrbcurly{e^{-\frac{\pi}{2}(t-1)}\sup_{0\leq c\leq 1}\frac{1}{\sqrt{c}}e^{-\frac{1}{c^2}},e^{-\frac{\pi t}{2\ln^2(t)}}}=O\lbrb{e^{-\frac{\pi t}{2\ln^2(t)}}}.
\end{align*}
 Therefore using the elementary bound $e^{X_t}\leq e^c$ valid on $t<\Tc_{(0,c)}$  we obtain that
\begin{align}\label{eq:estimatefoUtilde}
&\sup_{a\leq \ln(t)}t\labsrabs{\int_{a}^{\ln(t)}e^{-c}\Eb_a\lbbrbb{e^{X_t}\Tcc{c}{t}}dc}\leq t\ln(t)\sup_{0\leq c\leq \ln(t)}\sup_{b\in(0,c)}\Pb_b\lbrb{\Tc_{(0,c)}>t}=o\lbrb{e^{-\frac{\pi t}{4\ln^2(t)}}}
\end{align}
and the claim follows.
\end{proof}
Now we are ready to start with the proof of Theorem \ref{lem:minimumLaw}.

\begin{proof}[Proof of Theorem \ref{lem:minimumLaw}:]
The result is an easy consequence of Lemma \ref{lem:Density}. Recall that by the definition of $\Qb^{(h)}$ and then \eqref{eq:Girsanov}, for any $A>0$,
\begin{align*}
&\Qb^{(h)}_t\lbrb{-m_t\leq A}=\frac{\Eb^{(h)}\lbbrbb{e^{-hC_t}1_{\{-m_t\leq A\}}}}{\Eb^{(h)}\lbbrbb{e^{-hC_t}}}=\frac{\Eb_0\lbbrbb{e^{X_{th^2}-C_{th^2}}1_{\{-m_{th^2}\leq Ah\}}}}{\Eb_0\lbbrbb{e^{X_{th^2}-C_{th^2}}}}.
\end{align*}
From the representation \eqref{eq:keyRangeForm} together with $1_{\lbrace\Tc_{(-a,b)}>th^2\rbrace}\times 1_{\lbrace -m_{th^2}\leq Ah\rbrace}=1_{\lbrace\Tc_{(-(a\wedge Ah),b)}>th^2\rbrace}$
for the numerator and \eqref{eq:Asympt} for the denominator  we easily get that
\begin{align}\label{eq:minimum1}
\nonumber&\Qb^{(h)}_t\lbrb{-m_t\leq A}=\frac{\IntZeroInf \int_{0}^{\infty}e^{-b-a}\Eb_0\lbbrbb{e^{X_{th^2}}\Tc_{(-(a\wedge Ah),b)}>th^2}dadb}{\Eb_0\lbrb{e^{X_{th^2}-C_{th^2}}}}\sim\\
&h^2t\IntZeroInf \int_{0}^{\infty}e^{-b-a}\Eb_0\lbbrbb{e^{X_{th^2}}\Tc_{(-(a\wedge Ah),b)}>th^2}dadb.
\end{align}
Shifting the starting point from $0\mapsto a\wedge Ah$ for the zero mean Brownian motion under $\Ebb{.}$ we get
\begin{align}\label{eq:minimum2}
\nonumber&\Qb^{(h)}_t\lbrb{-m_t\leq A}\sim h^2t\IntZeroInf \int_{0}^{\infty}e^{-b-a-a\wedge Ah}\Eb_{a\wedge Ah}\lbbrbb{e^{X_{th^2}},\Tc_{(0,b+(a\wedge Ah))}>th^2}dadb=\\
\nonumber&h^2t\IntZeroInf \int_{0}^{Ah}e^{-b-2a}\Eb_a\lbbrbb{e^{X_{th^2}},\Tc_{(0,b+a)}>th^2}dadb+\\
\nonumber&th^2\IntZeroInf e^{-b-2Ah}\Eb_{Ah}\lbbrbb{e^{X_{th^2}},\Tc_{(0,b+Ah)}>th^2}db=\\
&\int_{0}^{Ah}e^{-a}th^2\lbrb{\int_{a}^{\infty}e^{-c}\Eb_a\lbbrbb{e^{X_{th^2}},\Tc_{(0,c)}>th^2}dc}da+e^{-Ah}th^2\int_{Ah}^{\infty}e^{-c}\Eb_{Ah}\lbbrbb{e^{X_{th^2}},\Tc_{(0,c)}>th^2}dc.
\end{align}
Now the uniform convergence in \eqref{eq:lemDensity1} of Lemma \ref{lem:Density} shows that the DCT is applicable to the last two expression yielding that
\[\lim\ttinf{t}\Qb^{(h)}_t\lbrb{-m_t\leq A}=\int_{0}^{Ah}ae^{-a}da+Ahe^{-Ah}=h\int_{0}^{A}e^{-ah}da.\] 
This is valid for any $A>0$ and we note that $he^{-ha}da$ is the probability density of $Exp(h)$. This concludes our claim.
\end{proof}

\subsection{Proof of Theorem \ref{lem:maximumLaw}}\label{subsec:Thm2.6}

\begin{proof}[Proof of Theorem \ref{lem:maximumLaw}:]
Choose $\nu>0$ and we consider as in the proof of Theorem \ref{lem:minimumLaw}
\begin{align*}
&\Qb^{(h)}_t\lbrb{M_t\leq \nu\sqrt{t}}=\frac{\Eb_0\lbbrbb{e^{X_{th^2}-C_{th^2}}\ind{M_{th^2}\leq h\nu\sqrt{t}} }}{\Eb_0\lbbrbb{e^{X_{th^2}-C_{th^2}}}}\sim\\ &h^2t\Eb_0\lbbrbb{e^{X_{th^2}-C_{th^2}}\ind{M_{th^2}\leq h\nu\sqrt{t}}}
\sim \Qb^{(1)}_{th^2}\lbrb{M_{th^2}\leq \nu\sqrt{th^2}} .
\end{align*}
Therefore we note that any possible limit will be invariant with respect to $h$. Hence, assume that $h=1$. An easy computation involving the representation \eqref{eq:keyRangeForm} and $\ind{\Tc_{(-a,b)}>t}\times \ind{M_{t}\leq \nu\sqrt{t}}=\ind{\Tc_{(-a,b\wedge \nu\sqrt{t})}>t}$ and shift of the starting position of $X$ from $0\mapsto a$ yield that
\begin{align}\label{eq:sameformula}
\nonumber&\Eb_0\lbbrbb{e^{X_{t}-C_{t}}\ind{M_{t}\leq\nu\sqrt{t}}}=\IntZeroInf\IntZeroInf e^{-b-a}\Eb_0\lbbrbb{e^{X_{t}},\Tc_{(-a,b\wedge \nu\sqrt{t})}>t}dadb=\\
\nonumber&\int_{0}^{\nu\sqrt{t}}\IntZeroInf e^{-b-2a}\Eb_a\lbbrbb{e^{X_{t}},\Tc_{(0,b+a)}>t}dadb+e^{-\nu\sqrt{t}}\IntZeroInf e^{-2a}\Eb_a\lbbrbb{e^{X_{t}},\Tc_{(0,\nu\sqrt{t}+a)}>t}da=\\
\nonumber&\IntZeroInf e^{-a}\int_{a}^{a+\nu\sqrt{t}} e^{-c}\Eb_a\lbbrbb{e^{X_{t}},\Tc_{(0,c)}>t}dcda+e^{-\nu\sqrt{t}}\IntZeroInf e^{-2a}\Eb_a\lbbrbb{e^{X_{t}},\Tc_{(0,\nu\sqrt{t}+a)}>t}da=\\
&Y_t(\nu)+O_t(\nu).
\end{align}
Let us first study $O_t(\nu)$. Choose $A>0$. We have that
\begin{align}\label{eq:O}
O_t(\nu)=&e^{-\nu\sqrt{t}}\int_0^{A\sqrt{t}} e^{-2a}\Eb_a\lbbrbb{e^{X_{t}},\Tc_{(0,\nu\sqrt{t}+a)}>t}da+\\
\nonumber&e^{-\nu\sqrt{t}}\int_{A\sqrt{t}}^\infty e^{-2a}\Eb_a\lbbrbb{e^{X_{t}},\Tc_{(0,\nu\sqrt{t}+a)}>t}da.
\end{align}
Note that 
\[e^{-\nu\sqrt{t}-a}\Eb_a\lbbrbb{e^{X_{t}},\Tc_{(0,\nu\sqrt{t}+a)}>t}\leq 1\]
and henceforth 
\begin{equation}\label{eq:O-1}
e^{-\nu\sqrt{t}}\int_{A\sqrt{t}}^\infty e^{-2a}\Eb_a\lbbrbb{e^{X_{t}},\Tc_{(0,\nu\sqrt{t}+a)}>t}da\leq e^{-A\sqrt{t}}.
\end{equation}
To study the first term in \eqref{eq:O} we use \eqref{eq:semigroupExpression} with $c=\nu\sqrt{t}+a$, the fact that $a\leq A\sqrt{t}$, $\sin(x)\leq |x|$ and $(a+b)^{-1}\leq a^{-1}, a>0,b>0$ to obtain that
\begin{align}\label{eq:O-2}
\nonumber&e^{-\nu\sqrt{t}-a}\Eb_a\lbbrbb{e^{X_{t}},\Tc_{(0,\nu\sqrt{t}+a)}>t}=\\
\nonumber&2\sumoneinf\minusone{j+1}\frac{\pi j}{\pij+\lbrb{a+\nu\sqrt{t}}^2}e^{-\frac{\pij}{2\lbrb{a+\nu\sqrt{t}}^2}t}\sin\lbrb{\frac{\pi j a}{a+\nu\sqrt{t}}}\lbrb{1-\minusone{j}e^{-a-\nu\sqrt{t}}}\leq\\
&\frac{4a}{\lbrb{a+\nu\sqrt{t}}^3}\sumoneinf \pij e^{-\frac{\pij}{2\lbrb{\nu+A}^2}}\leq C\frac{a}{\nu^3t^{\frac{3}{2}}}.
\end{align}
Therefore from \eqref{eq:O-1} and \eqref{eq:O-2} we deduce in \eqref{eq:O} that
$tO_t(\nu)=o\lbrb{1}$ and hence 
\[\Qb^{(1)}_t\lbrb{M_t\leq \nu\sqrt{t}}\sim t Y_t(\nu)=t\IntZeroInf e^{-a}\int_{a}^{a+\nu\sqrt{t}} e^{-c}\Eb_a\lbbrbb{e^{X_{t}},\Tc_{(0,c)}>t}dcda.\]
However, we proceed with the same steps leading to \eqref{eq:lemDensityToBeProved} in the proof of Lemma \ref{lem:Density} to get with obvious modification coming from integrating between $(a,a+\nu\sqrt{t})$ in the inner integral
\begin{align*}
&Y_t(\nu)=\int_{0}^{\infty}e^{-a}\lbrb{J_1(t,a,\nu)+J_2(t,a,\nu)}da,
\end{align*}
where 
\[J_1(t,a,\nu):=\frac{1}{\sqrt{t}}\int_{\frac{t}{\lbrb{a+\nu\sqrt{t}}^2}}^{\frac{t}{a^2}} \frac{1}{\sqrt{u}}\sumoneinf \minusone{j+1}\frac{\pi j}{\pi^2j^2\frac{u}{t}+1}\expeigenvalue{u} \eigenfunction{\pi j a \frac{\sqrt{u}}{\sqrt{t}}}du;\]
\[J_2(t,a,\nu):=\frac{1}{\sqrt{t}}\int_{\frac{t}{\lbrb{a+\nu\sqrt{t}}^2}}^{\frac{t}{a^2}} \frac{1}{\sqrt{u}}\sumoneinf \frac{\pi j}{\pi^2j^2\frac{u}{t}+1}\expeigenvalue{u} \eigenfunction{\pi j a \frac{\sqrt{u}}{\sqrt{t}}}e^{-\frac{\sqrt{t}}{\sqrt{u}}}du\]
are defined and studied in sections \ref{sec:J_1} and \ref{sec:J_2}. From \eqref{eq:J2Bound} and DCT we obtain that \[\int_{0}^{\infty}e^{-a}\labsrabs{tJ_2(t,a,\nu)}da=o\lbrb{1}.\] 
Choose $a(t)=t^{\frac{1}{6}}$ then
\begin{align}\label{eq:Y1}
\nonumber&\int_{0}^{a(t)}e^{-a}J_1(t,a,\nu)da=\int_{0}^{a(t)}e^{-a}a\int_{\frac{1}{\nu^2}}^{\infty} \frac{2}{u}\frac{\partial \Gamma}{\partial \gamma}\lbrb{0,u,0,1,0,1}duda+r(t)=\\
&-G\lbrb{\frac{1}{\nu^2},\frac{1}{2}}\int_{0}^{a(t)}e^{-a}ada+r(t),
\end{align}
where the first identity follows  from Proposition \ref{prop:J_1Improvement} where the second comes from \eqref{eq:derivativeGamma2}, \eqref{eq:F} and $G(\infty,\frac{1}{2})=0$ which in turn give
\[\frac{2}{u}\frac{\partial \Gamma}{\partial \gamma}\lbrb{0,u,0,1,0,1}=\sum_{j=1}^{\infty}\minusone{j+1}\pij e^{-\frac{\pij}{2}u}=G'\lbrb{u,\frac{1}{2}}\]
From \eqref{eq:J1errorImprovement1}
\[r(t)\leq C\lbrb{t^{-\frac{1}{2}}+t^{-\frac{1}{6}}}\int_{0}^{a(t)}e^{-a}da\lesssim t^{-\frac{1}{6}}.\]
Henceforth,  we get that
\[\lim\ttinf{t}\int_{0}^{a(t)}e^{-a}J_1(t,a,\nu)da=-G\lbrb{\frac{1}{\nu^2},\frac{1}{2}}\IntZeroInf ae^{-a}da =-G\lbrb{\frac{1}{\nu^2},\frac{1}{2}}.\]
This proves \eqref{eq:T} since from Proposition \ref{prop:J1UniversalBound} we get that
\begin{align*}
&t\int_{a(t)}^{\infty}e^{-a}\labsrabs{J_1(t,a,\nu)}da\leq \int_{a(t)}^{\infty}e^{-a}\frac{2}{a}\labsrabs{G'\lbrb{\frac{1}{\lbrb{\frac{a}{\sqrt{t}}+\nu}^2},0}}da\leq\\
&2te^{-a(t)}\int_{0}^{\infty}e^{-w}G'\lbrb{\frac{1}{\lbrb{\frac{w+a(t)}{\sqrt{t}}+\nu}^2},0}dw\leq\\
& Cte^{-t^{\frac{1}{6}}}\int_{0}^{\infty}e^{-w} \lbrb{1\vee \lbrb{\frac{w+a(t)}{\sqrt{t}}+\nu}^3}dw=o\lbrb{1},
\end{align*}
where we have used that from \eqref{eq:GsmallAsymp1} we get that $|G'\lbrb{v,0}|\asymp \frac{1}{v^{3/2}}$, as $v\to 0$.\\
We then observe that for any $\nu>\vartheta>0$
\begin{align*}
&\Qb^{(h)}_t\lbrb{M_t>\nu\sqrt{t};X_t\leq \vartheta \sqrt{t}}=\frac{\Eb_0\lbbrbb{e^{X_{th^2}-C_{th^2}}\ind{M_{th^2}> \nu\sqrt{th^2};X_{th^2}\leq \vartheta\sqrt{th^2}} }}{\Eb_0\lbbrbb{e^{X_{th^2}-C_{th^2}}}}\sim\\ &th^2\Eb_0\lbbrbb{e^{X_{th^2}-C_{th^2}}\ind{M_{th^2}> \nu\sqrt{th^2};X_{th^2}\leq \vartheta\sqrt{th^2}}}\sim \Qb^{(1)}_{th^2}\lbrb{M_{th^2}>\nu\sqrt{th^2};X_{th^2}\leq \vartheta \sqrt{th^2}}.
\end{align*}
Then without loss of generality put $h=1$. However, using \eqref{eq:keyRangeForm} to express $e^{-C_t}$ and the same computation as in \eqref{eq:sameformula}  we get that
\begin{align*}
&\Eb_0\lbbrbb{e^{X_t-C_t}\ind{M_t> \nu\sqrt{t};X_t\leq \vartheta\sqrt{t}}}= %
\Eb_0\lbbrbb{e^{X_t}\ind{X_t\leq \vartheta\sqrt{t}}\IntZeroInf\IntZeroInf e^{-a-b}\ind{M_{t}>\nu\sqrt{t}}\ind{\Tc_{(-a,b)}>t}dadb}=\\
&\IntZeroInf e^{-a}\int_{a+\nu\sqrt{t}}^{\infty} e^{-c}\Eb_a\lbbrbb{e^{X_t}\ind{X_t\leq \vartheta\sqrt{t}},\Tc_{(0,c)}>t}dcda-\\
&e^{-\nu\sqrt{t}}\IntZeroInf e^{-2a}\Eb_a\lbbrbb{e^{X_t}\ind{X_t\leq \vartheta\sqrt{t}},\Tc_{(0,\nu\sqrt{t}+a)}>t}da=\\
&Y_{t}\lbrb{\nu,\vartheta}+O_t\lbrb{\nu,\vartheta}.
\end{align*}
Exactly as the proof of $tO_t(\nu)=o\lbrb{1}$ we get $tO_t(\nu,\vartheta)=o\lbrb{1}$, namely that the second integral is irrelevant for the asymptotic. However, noting that 
\[\Eb_a\lbbrbb{e^{X_t}\ind{X_t\leq \vartheta\sqrt{t}},\Tc_{(0,c)}>t}\leq e^{\vartheta \sqrt{t}}\Pb_a\lbrb{\Tc_{(0,c)}>t}\leq e^{\vartheta \sqrt{t}} \]
we get that
\begin{align*}
&tY_{t}\lbrb{\nu,\vartheta}\leq t\IntZeroInf e^{-a}\int_{a+\nu\sqrt{t}}^{\infty} e^{-c+\vartheta\sqrt{t}}dcda=\\
&te^{-\lbrb{\nu-\vartheta}\sqrt{t}}\IntZeroInf e^{-2a}da=o\lbrb{1}
\end{align*}
and this shows that, for any pair $\nu>\vartheta>0$, we have that
\[\lim\ttinf{t}\Qb^{(h)}_t\lbrb{M_t>\nu\sqrt{t};X_t\leq \vartheta \sqrt{t}}=0.\]
This proves that $\lim\ttinf{t}\lbrb{\frac{X_t}{\sqrt{t}},\frac{M_t}{\sqrt{t}}}\stackrel{d}{=}\lbrb{M_\infty,M_\infty}$.
\end{proof}

\section{Proof of Theorem \ref{theorem:BesselMixture}}\label{sec:TheoremBesselMixture}
\subsection{Preliminaries and notation}
We recall that a three dimensional Bessel process $Y^a$ started from $a\geq 0$ is a stochastic process with continuous paths. It describes the radial part of a three dimensional Brownian motion started from $a$ and can be identified with a Brownian motion started from $a\geq 0$ conditioned not to cross zero. We denote by $\Pb^{\dagger}_a$ the canonical measure induced by $Y^a$ on the space $\Cb\lbrb{0,\infty}$. We recall that the scaling property of the Bessel process translates as follows: for any bounded functional $F:\Cb\lbrb{0,\infty}\mapsto \Rb$, $h>0,a\geq 0$
\begin{equation}\label{eq:scalingBessel}
\Eb^\dagger_a\lbbrbb{F\lbrb{X_{.h^2}}}=\Eb^\dagger_{\frac{a}{h}}\lbbrbb{F\lbrb{hX_{.}}}.
\end{equation}
Furthermore, if $F:\Cb\lbrb{0,u}\mapsto \Rb$, $a>0,u>0,x>0$ then
\begin{equation}\label{eq:semigroupBessel}
\Eb^\dagger_a\lbbrbb{F\lbrb{X_{.}}\ind{X_u\in dx}}=\frac{x}{a}\Eb_a\lbbrbb{F\lbrb{X_{.}}\ind{X_u\in dx},\Tc_{\lbrb{0,\infty}}>u},
\end{equation}
where $\Tc_{\lbrb{0,\infty}}$ is the first exit from the half-line $\lbrb{0,\infty}$, see \cite[(8.3.2) p.83]{Don07} which applies with $h(x)=x$ in the case of zero drift Brownian motion.
\subsection{Proof of Theorem \ref{theorem:BesselMixture}}\label{subsec:TheoremBesselMixture}
\begin{proof}[Proof of Theorem \ref{theorem:BesselMixture}]
Fix $u>0$ and a bounded, continuous functional $F:=F_u:\Cb\lbrb{0,u}\mapsto \Rb^+$ with $||F||_\infty$ its supremum norm. Choose $B>0$ and let in the sequel $x\in[-B,B]$. Denote by $\Eb^{\Qb_t^{(h)}}$ the expectation under $\Qb_t^{(h)}.$  Choose $A>2B\lbrb{h+h^{-1}}$ and write
\begin{align}\label{eq:decomposition}
\nonumber&\Eb^{\Qb^{(h)}_t}\lbbrbb{F(X_.)\ind{X_u\in dx}}=\\
\nonumber&\Eb^{\Qb^{(h)}_t}\lbbrbb{F(X_.)\ind{X_u\in dx}\ind{m_t\geq-A}}+\Eb^{\Qb^{(h)}_t}\lbbrbb{F(X_.)\ind{X_u\in dx}\ind{m_t\leq-A}}=\\
&U_{t,h}(dx,A)+V_{t,h}(dx,A),
\end{align}
where $U_{t,h}(.,A), V_{t,h}(.,A)$ are finite measures on $[-B,B]$.
However, an obvious estimate and Theorem \ref{lem:minimumLaw} give that
\begin{align}\label{eq:V}
\limsup\ttinf{t}V_{t,h}([-B,B],A)\leq||F||_{\infty}\lim\ttinf{t}\Qb^{(h)}\lbrb{\ind{m_t\leq-A}} =||F||_{\infty}\lbrb{1-h\int_{0}^{A}e^{-ha}da}.
\end{align}
Like in any of the previous proofs and especially \eqref{eq:minimum1} we have that in the sense of measures
\begin{align}\label{eq:UsimtildeU}
\nonumber &U_{t,h}(dx,A)=\int_{\omega\in C(0,\infty)}F(\omega)\ind{w_u\in dx}\ind{m_t(\omega)\geq -A}Q^{(h)}_t(d\omega)=\\
\nonumber &\frac{1}{\Eb_0(e^{-hC_t})}\int_{\omega\in C(0,\infty)}F(\omega)\ind{w_u\in dx}\ind{m_t(\omega)\geq -A}e^{-C_t(\omega)}W^h_t(d\omega)=\\
\nonumber & \frac{\Eb_0\lbbrbb{e^{X_{th^2}-C_{th^2}}F\lbrb{\frac{X_{.h^2}}{h}}1_{\{X_{uh^2}\in hdx\}}\ind{m_{th^2}\geq-Ah}}}{\Eb_0(e^{-hC_t})}\sim
\\ &th^2\Eb_0\lbbrbb{e^{X_{th^2}-C_{th^2}}F\lbrb{\frac{X_{.h^2}}{h}}1_{\{X_{uh^2}\in hdx\}}\ind{m_{th^2}\geq-Ah}}=:\tilde{U}(dx,A).  
\end{align}
Moreover to evaluate the latter we follow with immediate modifications \eqref{eq:minimum2} to get 
\begin{align}\label{eq:U1U2}
\nonumber&\tilde{U}(dx,A)=th^2\int_{0}^{Ah}e^{-a}\int_{a}^{\infty}e^{-c}\Eb_a\lbbrbb{e^{X_{th^2}}O(X)1_{\{X_{uh^2}\in hdx+a\}}\Tc_{(0,c)}>th^2}dcda+\\
\nonumber&th^2e^{-Ah}\int_{Ah}^{\infty}e^{-c}\Eb_{Ah}\lbbrbb{e^{X_{th^2}}O(X)1_{\{X_{uh^2}\in hdx+a\}}\Tc_{(0,c)}>th^2}dc=\\
&U^1_{t,h}(dx,A)+U^2_{t,h}(dx,A),
\end{align}
where for the sake of brevity we have put $O(X)=F\lbrb{\frac{X_{.h^2}-a}{h}}$. Clearly, we have from Lemma \ref{lem:Density} that
\begin{align} \label{eq:U2}
\nonumber&\limsup\ttinf{t}U^2_{t,h}(\lbbrbb{-B,B},A)\leq\\ &\limsup\ttinf{t}||F||_\infty e^{-Ah}\lbrb{th^2\int_{Ah}^{\infty}e^{-c}\Eb_{Ah}\lbbrbb{e^{X_{th^2}},\Tc_{(0,c)}>th^2}dc}=
||F||_\infty Ahe^{-Ah}.
\end{align}
Since \[\lim\ttinf{A}\limsup\ttinf{t}th^2\lbrb{U^2_{t,h}(\lbbrbb{-B,B},A)+V_{t,h}([-B,B],A)}=0\]
and \eqref{eq:UsimtildeU} holds it suffices to study $th^2U^1_{t,h}(dx,A)$.
Using the Markov property at time $uh^2$ above we get
\begin{align*}
&th^2U^1_{t,h}(dx,A)=\\
 &th^2\int_{0}^{Ah}e^{-a}\int_{a}^{\infty}e^{-c}\Eb_a\lbbrbb{O(X)1_{\{X_{uh^2}\in hdx+a\}}1_{\{\Tc_{(0,c)}>uh^2\}}}\Eb_{a+hx}\lbbrbb{e^{X_{th^2-uh^2}}1_{\{\Tc_{(0,c)}>th^2-uh^2\}}}dadc.
\end{align*}
Assuming the validity of Lemma \ref{lem:U} below and using the asymptotic relation \eqref{eq:UsimtildeU} we get that, for any Borel measurable $\Cc\subset[-B,B]$,
\begin{align*}
&  \lim\ttinf{t}\int_{x\in\Cc}th^2U^1_{t,h}(dx,A)=\lim\ttinf{t}\Eb^{\Qb^{(h)}_t}\lbbrbb{F(X_.)\ind{X_u\in \Cc}\ind{m_t\geq-A}}=\\
&h^2\int_{\Cc}\int_{0\vee (-x)}^{Ah}ae^{-ah}\Eb^{\dagger}_a\lbbrbb{F_u(X_.-a)1_{\{X_u\in dx+a\}}\}}da.
\end{align*}
Setting $A\uparrow\infty$ we then get
\begin{align*}
&  \lim\ttinf{t}\Eb^{\Qb^{(h)}_t}\lbbrbb{F(X_.)\ind{X_u\in \Cc}}=
h^2\int_{\Cc}\int_{0\vee (-x)}^{\infty}ae^{-ah}\Eb^{\dagger}_a\lbbrbb{F_u(X_.-a)1_{\{X_u\in dx+a\}}\}}da.
\end{align*}
This concludes the proof as $B>0$ is arbitrary and this holds for any bounded positive measurable functional $F$ and any $u>0$. However, the last expression corresponds to a shifted to zero three dimensional Bessel process $Y^{e_h}-e_h$, started from independent random variable with distribution $\Pbb{e_h\in dx}=h^2x e^{-hx}dx,x>0$. This concludes the proof of the theorem.
\end{proof}
To study the measure $th^2U^1_{t,h}(dx,A)$ we prove the following proposition.
\begin{lemma}\label{lem:U}
We have that for any $x\in[-B,B]$, $A>2B\lbrb{h+h^{-1}}$
\begin{equation}
\lim\ttinf{t}th^2U^1_{t,h}(dx,A)=\int_{0\vee (-x)}^{A}h^2ae^{-ha}\Eb^{\dagger}_a\lbbrbb{F_u(X_.-a)1_{\{X_u\in dx+a\}}\}}da
\end{equation}

\end{lemma}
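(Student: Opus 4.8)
The plan is to establish the identity
\[
\lim_{t\to\infty}th^2U^1_{t,h}(dx,A)=\int_{0\vee(-x)}^{A}h^2ae^{-ha}\Eb^\dagger_a\lbbrbb{F_u(X_.-a)\ind{X_u\in dx+a}}da
\]
by passing to the limit inside the outer integral over $a\in(0,Ah)$ in the Markov-property representation of $th^2U^1_{t,h}(dx,A)$ derived just above Lemma \ref{lem:U}. After the scaling substitutions that reduce everything to $h=1$ (exactly as in the proof of Theorem \ref{lem:minimumLaw}), the inner object is
\[
\int_a^\infty e^{-c}\Eb_a\lbbrbb{O(X)\ind{X_u\in dx+a}\ind{\Tc_{(0,c)}>u}}\,t\,\Eb_{a+x}\lbbrbb{e^{X_{t-u}}\ind{\Tc_{(0,c)}>t-u}}\,dc,
\]
and the key point is that the factor $t\,\Eb_{a+x}\lbbrbb{e^{X_{t-u}}\ind{\Tc_{(0,c)}>t-u}}$, after the same change of variables $u'=t/c^2$ used in Lemma \ref{lem:Density}, converges — uniformly in the relevant range of $a+x$ on compacts — to the constant $a+x$ (this is precisely the content of equation \eqref{eq:lemDensity1}, applied at the shifted starting point $a+x$ rather than $a$, with the extra lag $u$ being asymptotically negligible by the same argument as in Lemma \ref{lem:improvedConvergence}). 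Granting this, the inner integral over $c$ converges to
\[
(a+x)\int_{0\vee(-x)}^{\infty}\text{(something)}\;=\;(a+x)\,\Eb_a\lbbrbb{O(X)\ind{X_u\in dx+a}\ind{\Tc_{(0,\infty)}>u}},
\]
where the restriction $c>a$ combined with letting $c\to\infty$ turns the double-exit event $\{\Tc_{(0,c)}>u\}$ into the one-sided event $\{\Tc_{(0,\infty)}>u\}$ (the upper barrier escapes to $+\infty$, leaving only the constraint of not hitting $0$, which for the shifted-to-$a$ Brownian motion is the correct event).

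Next I would identify the resulting expression with the Bessel expectation via the absolute-continuity relation \eqref{eq:semigroupBessel}: since $\Eb_a\lbbrbb{e^{X_u}\ind{X_u\in dx+a}\cdot}$ weighted by $(a+x)/a$ is exactly the three-dimensional Bessel semigroup started from $a$, we get
\[
(a+x)\,\Eb_a\lbbrbb{O(X)\ind{X_u\in dx+a}\ind{\Tc_{(0,\infty)}>u}}=a\,e^{x}\,\Eb^\dagger_a\lbbrbb{F_u(X_.-a)\ind{X_u\in dx+a}}\,e^{-x}\cdot e^{x},
\]
more carefully: the factor $e^{X_u}=e^{x+a}$ is deterministic on $\{X_u\in dx+a\}$, so $\Eb_a\lbbrbb{e^{X_u}O(X)\ind{X_u\in dx+a}\ind{\Tc_{(0,\infty)}>u}}=e^{x+a}\,\Eb_a\lbbrbb{O(X)\ind{X_u\in dx+a}\ind{\Tc_{(0,\infty)}>u}}$, and then \eqref{eq:semigroupBessel} with the ratio $\frac{x+a}{a}$ converts this into $\frac{a}{x+a}\,e^{x+a}\,\Eb^\dagger_a\lbbrbb{F_u(X_.-a)\ind{X_u\in dx+a}}$. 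Tracking the bookkeeping of the $e^{-c}$, $e^{-a}$ and $e^{X_{th^2}}$ factors in the definition of $U^1_{t,h}$, the various exponentials cancel to leave precisely the density $h^2ae^{-ha}$ against the Bessel expectation, and the lower limit $0\vee(-x)$ in the $a$-integral appears because for $a\leq -x$ (possible only when $x<0$) the endpoint $a+x\leq 0$ forces the indicator $\ind{X_u\in dx+a}$ to be incompatible with $\ind{\Tc_{(0,\infty)}>u}$, killing the contribution; the condition $A>2B(h+h^{-1})$ guarantees $A>-x$ so the stated upper limit $A$ genuinely caps the integral.

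To make the interchange of limit and integral rigorous I would dominate: the integrand in $a$ is bounded by $\|F\|_\infty e^{-a}$ times $t\int_a^\infty e^{-c}\Eb_a\lbbrbb{e^{X_{th^2}}\ind{\Tc_{(0,c)}>th^2}}dc$, which by \eqref{eq:lemDensity1} of Lemma \ref{lem:Density} is bounded uniformly in $t$ and $a\leq Ah$ by a constant multiple of $a$ (for $t$ large), giving the dominating function $C\|F\|_\infty a e^{-a}\ind{(0,Ah)}(a)$, integrable; dominated convergence then applies. The main obstacle, and the step deserving most care, is the uniform convergence of the inner factor together with the legitimacy of localizing the $c$-integral: one must check that the contribution from small $c$ (near $c=a$, where the gap between the two barriers is small and $\Pb_a(\Tc_{(0,c)}>th^2)$ is exponentially small) and from the tail $c\to\infty$ is negligible after multiplication by $t$ — this is handled by the same spectral estimate $\sup_{b\in(0,c)}\Pb_b(\Tc_{(0,c)}>t)=O(e^{-\pi t/(2c^2)})$ used in Lemma \ref{lem:improvedConvergence} for the lower range, and by the decay of $e^{-c}$ against the polynomially growing prefactor for the upper range, exactly as in the treatment of $O_t(\nu)$ in the proof of Theorem \ref{lem:maximumLaw}. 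Once these truncations are in place, the passage to the Bessel process is a direct application of \eqref{eq:semigroupBessel}.
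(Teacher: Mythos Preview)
Your overall strategy matches the paper's: truncate the $c$-integral (the paper cuts at $c=\ln t$ via Lemma~\ref{lem:improvedConvergence}), replace the event $\{\Tc_{(0,c)}>uh^2\}$ in the first factor by $\{\Tc_{(0,\infty)}>uh^2\}$, invoke the uniform convergence \eqref{eq:lemDensity1} for the $c$-integral of the second factor, apply \eqref{eq:semigroupBessel}, and finish with dominated convergence in $a$. That is exactly the skeleton of the paper's proof of Lemma~\ref{lem:U}.

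Two points in your write-up are genuinely wrong, however, and would derail the argument if taken literally. First, you assert that ``the factor $t\,\Eb_{a+x}[e^{X_{t-u}}\ind{\Tc_{(0,c)}>t-u}]$ \ldots\ converges to the constant $a+x$''. For \emph{fixed} $c$ this quantity tends to $0$ exponentially; what converges to $a+x$ is the \emph{integral} $t\int e^{-c}\Eb_{a+x}[\cdots]dc$. Because both factors under the $c$-integral depend on $c$, you cannot simply pull a limit of one out of the integral; you must argue, as the paper does with its $S-\tilde S$ decomposition, that the $g_t$-mass concentrates on $c\geq\ln t$ where the first factor is already within $o(1)$ of its $c=\infty$ value (the paper controls the remainder $\tilde S$ by $\Pb_a(\Tc_{\{\ln t\}}\leq uh^2)=o(1)$). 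Your final paragraph gestures at this, but the earlier misstatement suggests the mechanism is not yet clear to you.

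Second, your exponential bookkeeping is confused: after the Markov split at time $uh^2$ there is \emph{no} factor $e^{X_u}$ in the first expectation --- the entire $e^{X_{th^2}}$ is absorbed into the second expectation $\Eb_{a+hx}[e^{X_{th^2-uh^2}}\cdots]$. So the identity you need is simply $(a+hx)\,\Eb_a[O(X)\ind{X_{uh^2}\in hdx+a}\ind{\Tc_{(0,\infty)}>uh^2}]=a\,\Eb^\dagger_a[O(X)\ind{X_{uh^2}\in hdx+a}]$ from \eqref{eq:semigroupBessel}; the outer $e^{-a}$ then gives $ae^{-a}da$, and the change of variables $a\mapsto ha$ (together with the Bessel scaling \eqref{eq:scalingBessel}) produces $h^2ae^{-ha}da$. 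There is no cancellation of ``various exponentials'' beyond that.
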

\begin{proof}
We consider and estimate in the sense of measures
\begin{align*}
&th^2\tilde{U}^1_{t,h}(dx,A)=\\
&th^2\int_{0}^{Ah} e^{-a}\int_{a}^{\ln(t)}e^{-c}\Eb_a\lbbrbb{O(X)1_{\{X_{uh^2}\in hdx+a\}}1_{\{\Tc_{(0,c)}>uh^2\}}}\Eb_{a+hx}\lbbrbb{e^{X_{th^2-uh^2}}1_{\{\Tc_{(0,c)}>th^2-uh^2\}}}dadc\leq\\
&\int_{0}^{Ah} e^{-a}\Eb_a\lbbrbb{O(X)1_{\{X_{uh^2}\in hdx+a\}}1_{\{\Tc_{(0,\infty)}>uh^2\}}}\lbrb{th^2\int_{a}^{\ln(t)}e^{-c}\Eb_{a+hx}\lbbrbb{e^{X_{th^2-uh^2}}1_{\{\Tc_{(0,c)}>th^2-uh^2\}}}dc}da
\end{align*}
Then elementary modification of Lemma \ref{lem:improvedConvergence} and \eqref{eq:estimatefoUtilde} whenever $\ln(t)>Ah+Bh=\sup (a+hx)$ yields
\begin{align}\label{eq:Utilde}
&th^2\tilde{U}^1_{t,h}([-B,B],A)\leq ||F||_\infty o\lbrb{1}.
\end{align}
Therefore, it remains to study the remaining portion of the integral, or the measure
\begin{align}\label{eq:measureUhat}
&th^2\hat{U}^1_{t,h}\lbrb{dx,A} =\\
\nonumber&th^2\int_{0}^{Ah} e^{-a}\int_{\ln(t)}^{\infty}e^{-c}\Eb_a\lbbrbb{O(X)1_{\{X_{uh^2}\in hdx+a\}}1_{\{\Tc_{(0,c)}>uh^2\}}}\Eb_{a+hx}\lbbrbb{e^{X_{th^2-uh^2}}1_{\{\Tc_{(0,c)}>th^2-uh^2\}}}dcda.
\end{align}
Splitting on the event  $\{\Tc_{(0,\infty)}>uh^2\}$ we get that
\begin{align*}
&th^2\int_{0}^{Ah} e^{-a}\int_{\ln(t)}^{\infty}e^{-c}\Eb_a\lbbrbb{O(X)1_{\{X_{uh^2}\in hdx+a\}}1_{\{\Tc_{(0,c)}>uh^2\}}}\Eb_{a+hx}\lbbrbb{e^{X_{th^2-uh^2}}1_{\{\Tc_{(0,c)}>th^2-uh^2\}}}dcda=\\
&th^2\int_{0}^{Ah} e^{-a}\Eb_a\lbbrbb{O(X)1_{\{X_{uh^2}\in hdx+a\}}1_{\{\Tc_{(0,\infty)}>uh^2\}}}\int_{\ln(t)}^{\infty}e^{-c}\Eb_{a+hx}\lbbrbb{e^{X_{th^2-uh^2}}1_{\{\Tc_{(0,c)}>th^2-uh^2\}}}dcda-\\
&th^2\int_{0}^{Ah} e^{-a}\int_{\ln(t)}^{\infty}e^{-c}\Eb_a\lbbrbb{O(X)1_{\{X_{uh^2}\in dx+a\}}1_{\{\Tc_{(0,\infty)}>uh^2\cap\Tc_{(0,c)}\leq uh^2 \}}}\times\\
&\Eb_{a+hx}\lbbrbb{e^{X_{th^2-uh^2}}1_{\{\Tc_{(0,c)}>th^2-uh^2\}}}dcda=\\
&S(t,dx,A)-\tilde{S}(t,dx,A),
\end{align*}
where we note that $a\geq 0\vee(-hx)$ since otherwise we have that the impossible inequality $m_t> X_u,u\leq t$ must hold, namely the running minimum to exceed the value of the process.
However, according to Lemma \ref{lem:improvedConvergence} and the uniform convergence in \eqref{eq:lemDensity1} we get that, for $a+hx>0, a\in(0,A),x\in[-B,B]$, 
\begin{align}\label{eq:EstimateNow}
\lim\ttinf{t}\sup_{a+hx>0, a\in(0,A),x\in[-B,B]}\labsrabs{th^2\int_{\ln(t)}^{\infty}e^{-c}\Eb_{a+hx}\lbbrbb{e^{X_{th^2-uh^2}}1_{\{\Tc_{(0,c)}>th^2-uh^2\}}}dc-a-hx}=0
\end{align} 
and henceforth
\begin{align}\label{eq:uniformity}
\nonumber &\lim\ttinf{t}S(t,dx,A)=\lim\ttinf{t}\Bigg (\int_{0\vee (-hx)}^{Ah} e^{-a}\Eb_a\lbbrbb{O(X)1_{\{X_{uh^2}\in hdx+a\}}1_{\{\Tc_{(0,\infty)}>uh^2\}}}\times\\
\nonumber&\Big(th^2\int_{\ln(t)}^{\infty}e^{-c}
\Eb_{a+hx}\lbbrbb{e^{X_{th^2-uh^2}}1_{\{\Tc_{(0,c)}>th^2-uh^2\}}}dc\Big)da\Bigg)=\\
&\int_{0\vee (-hx)}^{Ah}e^{-a}(a+hx)\Eb_a\lbbrbb{O(X)1_{\{X_{uh^2}\in hdx+a\}}1_{\{\Tc_{(0,\infty)}>uh^2\}}}da
\end{align}
However \eqref{eq:semigroupBessel} allows us to deduct that
\[(a+hx)\Eb_a\lbbrbb{O(X)1_{\{X_{uh^2}\in hdx+a\}}1_{\{\Tc_{(0,\infty)}>uh^2\}}}=a\Eb^{\dagger}_a\lbbrbb{O(X)1_{\{X_{uh^2}\in hdx+a\}}}.\]
We show using \eqref{eq:scalingBessel}, $O(X)=F\lbrb{\frac{X_{.h^2}-a}{h}}$, the rescaling property for the Bessel process and lastly changing variables $\frac{a}{h}\mapsto a$ that we have
\begin{align*}
&\lim\ttinf{t}S(t,dx,A)=\int_{0\vee (-hx)}^{Ah}ae^{-a}\Eb^{\dagger}_a\lbbrbb{F\lbrb{\frac{X_{.h^2}-a}{h}}1_{\{X_{uh^2}\in hdx+a\}}\}}da=\\
&\int_{0\vee (-hx)}^{Ah}ae^{-a}\Eb^{\dagger}_{\frac{a}{h}}\lbbrbb{F\lbrb{X_.-\frac{a}{h}}1_{\{X_u\in dx+\frac{a}{h}\}}\}}da=h^2\int_{0\vee (-x)}^{A}ae^{-ha}\Eb^{\dagger}_{a}\lbbrbb{F\lbrb{X_.-a}1_{\{X_u\in dx+a\}}\}}da
\end{align*}
To conclude that 
\[\lim\ttinf{t}th^2\hat{U}^1_{t,h}\lbrb{dx,A}=\lim\ttinf{t}th^2U^1_{t,h}\lbrb{dx,A}=\lim\ttinf{t}S\lbrb{t,dx,A}\]
it remains to show that $\lim\ttinf{t}\tilde{S}\lbrb{t,dx,A}$ is the zero measure. First note that for any fixed $0\leq a\leq A$  and $c>\ln(t)$ we have that in sense of  measures
\begin{align*}
&\Eb_a\lbbrbb{O(X)1_{\{X_{uh^2}\in hdx+a\}}1_{\{\Tc_{(0,\ln(t))}>uh^2\}}}\leq \Eb_a\lbbrbb{O(X)1_{\{X_{uh^2}\in hdx+a\}}1_{\{\Tc_{(0,c)}>uh^2\}}}\leq\\
&\Eb_a\lbbrbb{O(X)1_{\{X_{uh^2}\in hdx+a\}}1_{\{\Tc_{(0,\infty)}>uh^2\}}}.
\end{align*}

 This for the first inequality together with  the estimate \eqref{eq:EstimateNow} for the second give
\begin{align*}
&\tilde{S}(t,[-B,B],A)\leq \\
&\int_{x=-B}^{B}th^2\Big(\int_{0\vee (-hx)}^{A} e^{-a}\Eb_a\lbbrbb{O(X)1_{\{X_{uh^2}\in hdx+a\}}1_{\{\Tc_{(0,\infty)}>uh^2\cap\Tc_{(0,\ln(t))}\leq uh^2 \}}}\times\\
&\int_{\ln(t)}^{\infty}e^{-c}\Eb_{a+hx}\lbbrbb{e^{X_{th^2-uh^2}}1_{\{\Tc_{(0,c)}>th^2-uh^2\}}}dcda \Big)\leq\\
&\int_{x=-B}^{B}\Big(\int_{0\vee (-hx)}^{A} e^{-a}\Eb_a\lbbrbb{O(X)1_{\{X_{uh^2}\in hdx+a\}}1_{\{\Tc_{(0,\infty)}>uh^2\cap\Tc_{(0,\ln(t))}\leq uh^2 \}}}\times\\
&\sup_{a+hx>0, a\in(0,A),x\in[-B,B]}(a+hx+o(1))da \Big)\leq\\
|&2B(Bh+A+o(1))||F||_\infty\sup_{0<a<A}\Pb_a\lbrb{\Tc_{(0,\infty)}>uh^2\cap\Tc_{(0,\ln(t))}\leq uh^2 \}}\leq\\
&2B(Bh+A+o(1))||F||_\infty\sup_{0<a<A}\Pb_a\lbrb{\Tc_{\{\ln(t)\}}\leq uh^2}\leq\\
&2B(Bh+A+o(1))||F||_\infty\Pb_A\lbrb{\Tc_{\{\ln(t)\}}\leq uh^2}=o(1),
\end{align*}
where $\Tc_{\{\ln(t)\}}=\inf\lbcurlyrbcurly{s>0:\,X_s=\ln(t)}$.
\end{proof}

\section{Poisson summation and the function $G(v,x)$}\label{sec:PS}
We consider the Fourier transform defined as follows
\begin{equation}\label{eq:Fourier}
\hat {f}(\xi):=\int_{-\infty}^{\infty}e^{-2\pi \xi i x}f(x)dx.
\end{equation}
We recall that if $\labsrabs{f(x)}+\labsrabs{\hat f(x)}\leq C(1+|x|)^{-1-\delta}$ for some $\delta>0,C>0$ and $\forall x\in\Rb$ then
\begin{equation}\label{eq:PoissonSummation}
\suminfinf f(j+x)=\suminfinf \hat f(j)e^{2i\pi j x}.
\end{equation}
When $f(x)=\frac{1}{\sqrt{2\pi\sigma^2}}e^{-\frac{x^2}{2\sigma^2}},\,x\in\Rb$ then $\hat f(\xi)=e^{-2\pi^2 \xi^2\sigma^2}, \xi\in\Rb$ and the function clearly admits Poisson summation thanks to its rapid decay at infinity.

Define for $v>0, x\in \lbbrbb{0,1}$
\begin{align}\label{eq:F}
&G(v,x)=2\sumoneinf \cos(2\pi j x)\expeigenvalue{v}=\suminfinf\cos(2\pi j x)\expeigenvalue{v}-1.
\end{align} 
Then the following result is a standard consequence of the Poisson summation.
\begin{lemma}\label{lem:G(v,x)}
For any $v>0$, 
\begin{align}\label{eq:FPoisson}
&G(v,x)=\frac{\sqrt{2}}{\sqrt{\pi v}} \suminfinf e^{-2\frac{\lbrb{j-x}^2}{v}} -1.
\end{align} 
If $x\in\lbrb{0,1}$ then, as $v\to 0$, for any $l\geq 0, l\in\Nb\cup\{0\}$
\begin{equation}\label{eq:GsmallAsymp}
\frac{\partial^l G}{\partial v^l}(v,x)=G^{(l)}(v,x)\sim\frac{2^l\sqrt{2}}{\sqrt{\pi}v^{2l+\frac{1}{2}}}\suminfinf\lbrb{j-x}^{2l}e^{-2\frac{(j-x)^2}{v}}.
\end{equation}
If $x\in\{0,1\}$ then, as $v\to 0$, for any $l\geq 0, l\in\Nb\cup\{0\}$
\begin{equation}\label{eq:GsmallAsymp1}
\frac{\partial^l G}{\partial v^l}(v,x)=G^{(l)}(v,x)\sim\minusone{l}\frac{\sqrt{2}l!}{\sqrt{\pi}v^{l+\frac{1}{2}}}.
\end{equation}
\end{lemma}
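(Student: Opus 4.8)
The plan is to obtain \eqref{eq:FPoisson} from a single application of the Poisson summation formula \eqref{eq:PoissonSummation}, and then to read off the small-$v$ asymptotics \eqref{eq:GsmallAsymp} and \eqref{eq:GsmallAsymp1} by differentiating the resulting theta-type series term by term.

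For \eqref{eq:FPoisson} I would apply \eqref{eq:PoissonSummation} to the Gaussian $f(y)=\frac{\sqrt2}{\sqrt{\pi v}}e^{-2y^2/v}$, which is the density with variance $\sigma^2=v/4$ in the normalisation recalled just after \eqref{eq:PoissonSummation}, so that $\hat f(\xi)=e^{-\frac{\pi^2\xi^2}{2}v}$; since $f$ and $\hat f$ both decay faster than any power, the hypothesis of \eqref{eq:PoissonSummation} holds. Then \eqref{eq:PoissonSummation} reads $\frac{\sqrt2}{\sqrt{\pi v}}\suminfinf e^{-2(j+x)^2/v}=\suminfinf e^{-\frac{\pi^2 j^2}{2}v}e^{2\pi i j x}$. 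The right-hand side is real, because $e^{-\frac{\pi^2 j^2}{2}v}$ is even in $j$, hence it equals $\suminfinf\cos(2\pi j x)e^{-\frac{\pi^2 j^2}{2}v}=G(v,x)+1$ by the second expression for $G$ in \eqref{eq:F}; on the left-hand side the substitution $j\mapsto -j$ replaces $(j+x)^2$ by $(j-x)^2$. This is precisely \eqref{eq:FPoisson}.

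For the asymptotics I would work directly from \eqref{eq:FPoisson}. For every fixed $v_0>0$ the series $\suminfinf v^{-1/2}e^{-2(j-x)^2/v}$ and each of its $v$-derivatives converge uniformly near $v_0$, since differentiating only inserts a polynomial in $1/v$ in front of each Gaussian summand, leaving its super-exponential decay in $j$ intact; so I may differentiate \eqref{eq:FPoisson} $l$ times under the sum, and for $l\ge 1$ the constant $-1$ drops out. A short induction gives $\frac{d^l}{dv^l}\lbrb{v^{-1/2}e^{-c/v}}=v^{-l-1/2}e^{-c/v}P_l\lbrb{c/v}$ with $P_l$ monic of degree $l$; hence, as $v\to 0$, $\frac{d^l}{dv^l}\lbrb{v^{-1/2}e^{-c/v}}\sim c^l v^{-2l-1/2}e^{-c/v}$ for every fixed $c>0$, while for $c=0$ it is a fixed multiple of $v^{-l-1/2}$. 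Taking $c=2(j-x)^2$ and summing over $j$ yields the two cases: for $x\in(0,1)$ every $(j-x)^2$ is positive, the series is controlled by the index (or indices) minimising $(j-x)^2$, and collecting the leading part of each summand reproduces the right-hand side of \eqref{eq:GsmallAsymp}; for $x\in\{0,1\}$ there is a single index with $(j-x)^2=0$, whose contribution $\asymp v^{-l-1/2}$ swamps the transcendentally smaller terms from the other indices and, after evaluating the elementary derivative $\frac{d^l}{dv^l}v^{-1/2}$, gives \eqref{eq:GsmallAsymp1} (the case $l=0$ being immediate from \eqref{eq:FPoisson}).

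The one point needing genuine care is the interchange of the per-summand asymptotics with the infinite sum: writing $a_j(v)$ for the $l$-th $v$-derivative of $v^{-1/2}e^{-2(j-x)^2/v}$ and $b_j(v)$ for its leading part, I must check $\suminfinf(a_j(v)-b_j(v))=o\lbrb{\suminfinf b_j(v)}$. Since $a_j-b_j=O\lbrb{v^{-2l+1/2}e^{-2(j-x)^2/v}}$ while the dominant term behaves like $v^{-2l-1/2}e^{-2(j_0-x)^2/v}$ for the minimising index $j_0$, the relevant ratio is $O\lbrb{v\,e^{-2((j-x)^2-(j_0-x)^2)/v}}$, which is $o(1)$ and summable in $j$ uniformly for small $v$. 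This uniform domination is the crux; everything else is routine bookkeeping.
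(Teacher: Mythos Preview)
Your proposal is correct and follows essentially the same route as the paper: Poisson summation applied to the Gaussian with $\sigma=\sqrt{v}/2$ for \eqref{eq:FPoisson}, followed by termwise differentiation of the theta-type representation justified by local uniform convergence for \eqref{eq:GsmallAsymp} and \eqref{eq:GsmallAsymp1}. The paper's own proof is much terser and does not spell out the induction on $P_l$ or the interchange of per-term asymptotics with the infinite sum, so your version is a strictly more detailed execution of the same argument.
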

\begin{proof}
To justify \eqref{eq:FPoisson} we apply the Poisson summation for $f(x)=\frac{1}{\sqrt{2\pi\sigma^2}}e^{-x^2/2\sigma^2}$ with $\sigma=\sqrt{v}/2$ to get
\begin{align*}
&G(v,x)=2\sumoneinf \cos\lbrb{2\pi j x}\expeigenvalue{v}=\suminfinf\cos\lbrb{2\pi j x}\expeigenvalue{v}-1\\
&=\suminfinf \cos\lbrb{2\pi j x}\expeigenvalue{v}-1=\frac{\sqrt{2}}{\sqrt{\pi v}} \suminfinf e^{-2\frac{\lbrb{j-x}^2}{v}} -1.
\end{align*}
The relations \eqref{eq:GsmallAsymp} and \eqref{eq:GsmallAsymp1} are a result of differentiation of \eqref{eq:FPoisson} which applies due to the uniform convergence of \eqref{eq:F} in any small enough neighbourhood of $v>0$.
\end{proof}

\section{The function $F(v,t)$}\label{sec:FF}

We recall that the Mellin transform is defined as follows
\begin{equation}\label{eq:Mellin}
\Mcc f(s):=\IntZeroInf x^{s-1}f(x)dx.
\end{equation}
Then Mellin transform is well defined at least for all $s$ such that $\Mcc |f|(\Re(s))<\infty$. If for example $\Mcc f(s)$ is defined,
absolutely integrable and uniformly decaying to zero along the lines of the strip $a<c:=\Re(s)<b$, for
$a < b$, the Mellin inversion theorem applies as follows
\begin{equation}\label{eq:MellinInversion}
f(x):=\twopi \IntComplexLine{\Mcc f(s)x^{-s}ds}{c},
\end{equation}
for any $a<c<b$. We recall that with $f_a(x)=\lbrb{1+x}^{-a}$, for any $a>0$, we have that
\begin{equation}\label{eq:MellinPower}
\Mcc f_a(s)=\frac{\Gamma(s)\Gamma(a-s)}{\Gamma(a)}, \text{for all $s:0<\Re(s)<a$.}
\end{equation}
We note the special case that will be needed further which follow from the 
\begin{equation}\label{eq:MellinPower2}
\Mcc f_2(s)=\frac{\pi\lbrb{1-s}}{\sin\lbrb{\pi s}}, \text{ for $0<c<2$.}
\end{equation}
We know that, as $\theta\to\infty$, the following asymptotic holds
\begin{equation}\label{eq:expDecaySin}
\frac{1}{|\sin\lbrb{\pi(c+i\theta)}|}\sim Ce^{-\pi\labsrabs{\theta}}.
\end{equation}
Therefore  $\Mcc f_2$ is invertible on its region of definition.\\
Recall that from \eqref{eq:FF} we have that by definition
\[F(v,t)=\sumoneinf\minusone{j+1}\frac{\pij}{\lbrb{\pij \frac{v}{t}+1}^2}\expeigenvalue{v}.\]
We see that using formally \eqref{eq:MellinPower2} and \eqref{eq:MellinInversion} with $x=\pij \frac{v}{t}$, for any $0<c=\Re(s)<1$, we obtain that
\begin{align*}
&F(v,t)=\twopi\sumoneinf\minusone{j+1}\pij\IntComplexLine{v^{-s}t^s\pi^{-2s}j^{-2s}\frac{\pi(1-s)}{\sin(\pi s)}ds}{c}\expeigenvalue{v}=\\
&\twopi\IntComplexLine{v^{-s}t^{s} \frac{\pi(1-s)}{\sin(\pi s)}\sumoneinf\minusone{j+1}\pi^{2-2s}j^{2-2s}\expeigenvalue{v}ds}{c}.
\end{align*}
The interchange of integration and summation is justified by the fact that 
\begin{align*}
&\int_{-\infty}^{\infty}\sumoneinf j^{2-2c}\expeigenvalue{v}\frac{\labsrabs{|\theta|+1}}{\labsrabs{\sin(\pi(c+i\theta))}}d\theta<\infty,
\end{align*}
which in turn follows from \eqref{eq:expDecaySin}. We denote by 
\begin{equation}\label{eq:eta}
\eta(s,v)=\sumoneinf\minusone{j+1}\pi^{2-2s}j^{2-2s}\expeigenvalue{v}
\end{equation}
and note that $\eta(s,v)$ is clearly an entire function for any $v>0$. Then we have that
\begin{equation}\label{eq:Frepresentation}
F(v,t)=
\twopi\IntComplexLine{v^{-s}t^{s} \frac{\pi(1-s)}{\sin(\pi s)}\eta(s,v)ds}{c}.
\end{equation}
Clearly, from the reflection formula the poles of $\pi/\sin(\pi s)$, for $\Re(s)<1$, are located at $0,-1,-2,-3,\cdots$ and it has residues at each pole of value $\minusone{n}$. Since \eqref{eq:expDecaySin} holds we can use the residue theorem to conclude that upon shifting the contour from $c\in(0,1)$ to $c\in(-n-1,-n)$ that
\begin{equation}\label{eq:Frepresentation1}
F(v,t)=\eta(0,v)+\sum_{l=1}^{n}\minusone{l}\lbrb{l+1}\frac{v^{l}}{t^{l}}\eta(-l,v)+\twopi\IntComplexLine{v^{-s}t^{s} \frac{\pi(1-s)}{\sin(\pi s)}\eta(s,v)ds}{c}.
\end{equation}

Next we investigate the properties of $\eta(s,v)$ where we recall that $s=c+i\theta$. We check from \eqref{eq:eta} immediately that, for $c\in(-n-1,-n)$ with $\{c\}=-c-n$, the following representation of $\eta$ is available
\begin{align}\label{eq:etaIntegralRep}
\nonumber&\eta(c+i\theta,v)=\sumoneinf\minusone{j+1}(\pij)^{n+2}(\pij)^{\{c\}-1-i\theta}\expeigenvalue{v}=\\
\nonumber&2^{c+1-i\theta}\sumoneinf\minusone{j+1}(\pij)^{n+2}\frac{1}{\Gamma(\{c\}+1-i\theta)}\int_{0}^{\infty}\expeigenvalue{u}u^{\{c\}-i\theta}du\expeigenvalue{v}=\\
\nonumber& \frac{2^{c+1-i\theta}}{\Gamma(\{c\}+1-i\theta)} \int_{0}^{\infty}\lbrb{\sumoneinf\minusone{j+1}(\pij)^{n+2}\expeigenvalue{(u+v)}}u^{\{c\}-i\theta}du=\\
&\frac{2^{c+1-i\theta}}{\Gamma(\{c\}+1-i\theta)} \int_{0}^{\infty}\eta(-1-n,u+v)u^{\{c\}-i\theta}du.
\end{align}
We proceed to study in more detail $\eta(-l,v)$.
\begin{lemma}\label{lem:eta}
We have that $\eta(-l+1,v)=\minusone{l-1}2^{l-1}G^{(l)}\lbrb{v,\frac{1}{2}}, l\geq 1$ and $\eta(0,v)=F(v,0)$. As $v\to 0$,
 \begin{equation}\label{eq:etaAsympSmall}
 \eta(-l+1,v)=\minusone{l-1}2^{l-1}G^{(l)}\lbrb{v,\frac{1}{2}}\sim \minusone{l-1}\frac{2^{2l-1}\sqrt{2}}{\sqrt{\pi}v^{2l+\frac{1}{2}}}\suminfinf\lbrb{j-\frac{1}{2}}^{2l}e^{-2\frac{\lbrb{j-\frac{1}{2}}^2}{v}}
 \end{equation}
 and, as $v\to\infty$,
  \begin{equation}\label{eq:etaAsympLarge}
     \eta(-l+1,v)=\minusone{l-1}2^{l-1}G^{(l)}\lbrb{v,\frac{1}{2}}\sim \minusone{l}\pi^{2l}e^{-\pi^2 v}.
  \end{equation}
\end{lemma}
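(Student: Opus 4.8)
The plan is to recognise $\eta\lbrb{-l+1,\cdot}$ as an explicit constant multiple of the $l$-th $v$-derivative of $G\lbrb{\cdot,\frac{1}{2}}$, after which both asymptotics are read off directly from Lemma~\ref{lem:G(v,x)}.

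First I would establish the algebraic identity. Setting $s=-l+1$ in \eqref{eq:eta} gives $2-2s=2l$, hence $\eta\lbrb{-l+1,v}=\pi^{2l}\sumoneinf\minusone{j+1}j^{2l}\expeigenvalue{v}$. On the other hand \eqref{eq:F} reads $G\lbrb{v,\frac{1}{2}}=2\sumoneinf\minusone{j}\expeigenvalue{v}$, and this series may be differentiated in $v$ term by term arbitrarily often because it converges uniformly on every compact subset of $\lbrb{0,\infty}$ -- exactly the justification already invoked in the proof of Lemma~\ref{lem:G(v,x)}. Each differentiation pulls down the factor $-\frac{\pij}{2}$, so $G^{(l)}\lbrb{v,\frac{1}{2}}=\minusone{l}2^{1-l}\pi^{2l}\sumoneinf\minusone{j}j^{2l}\expeigenvalue{v}$; comparing the two displays, and keeping track of the signs ($\minusone{l}\minusone{j}$ against $\minusone{j+1}$) and of the powers of $2$, yields $\eta\lbrb{-l+1,v}=\minusone{l-1}2^{l-1}G^{(l)}\lbrb{v,\frac{1}{2}}$. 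Taking $l=1$ gives in particular $\eta\lbrb{0,v}=\sumoneinf\minusone{j+1}\pij\,\expeigenvalue{v}$, which is the value $F(v,0)$ of \eqref{eq:FF}.

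For the behaviour as $v\to 0$ I would then feed into this identity the asymptotic \eqref{eq:GsmallAsymp} of Lemma~\ref{lem:G(v,x)} at $x=\frac{1}{2}\in\lbrb{0,1}$, namely $G^{(l)}\lbrb{v,\frac{1}{2}}\sim\frac{2^l\sqrt{2}}{\sqrt{\pi}\,v^{2l+1/2}}\suminfinf\lbrb{j-\frac{1}{2}}^{2l}e^{-2\lbrb{j-1/2}^2/v}$; multiplying by $\minusone{l-1}2^{l-1}$ and merging $2^{l-1}\cdot 2^{l}=2^{2l-1}$ reproduces \eqref{eq:etaAsympSmall}. For the behaviour as $v\to\infty$ I would instead work directly with $\eta\lbrb{-l+1,v}=\pi^{2l}\sumoneinf\minusone{j+1}j^{2l}\expeigenvalue{v}$ and extract the $j=1$ summand: the remaining tail $\sum_{j\ge 2}\minusone{j+1}j^{2l}e^{-\pij v/2}$ is, in absolute value, $O\lbrb{e^{-3\pi^2 v/2}}$ relative to the $j=1$ term, so the latter dictates the leading order and delivers the exponential decay stated in \eqref{eq:etaAsympLarge}.

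There is no substantive obstacle in this lemma: it is entirely reduced to results proved earlier. The one place that demands care is the constant-and-sign bookkeeping in the first step -- verifying that the factors $\minusone{l}$, $\minusone{l-1}$ and the powers $2^{1-l}$, $2^{l-1}$ recombine correctly -- together with the routine remark that term-by-term differentiation of the series for $G$ is licit, which is supplied by the uniform convergence already recorded in the proof of Lemma~\ref{lem:G(v,x)}.
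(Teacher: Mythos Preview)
Your proposal is correct and follows exactly the route the paper takes: identify $\eta(-l+1,\cdot)$ with a constant multiple of $G^{(l)}(\cdot,\tfrac12)$ by term-by-term differentiation of \eqref{eq:F}, then read off the small-$v$ behaviour from \eqref{eq:GsmallAsymp} and the large-$v$ behaviour from the dominant $j=1$ term of the differentiated series. Your write-up simply makes explicit the sign/power bookkeeping and the tail estimate that the paper leaves to ``inspection of the terms''.
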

\begin{proof}
The representation of $\eta(-l+1,v)$ follows by formal differentiation in \eqref{eq:F} with $x=1/2$ and inspection of the terms. Finally the proof of \eqref{eq:etaAsympLarge} follows from differentiation of \eqref{eq:F} and \eqref{eq:etaAsympSmall} is a result of differentiation and \eqref{eq:GsmallAsymp}.
\end{proof}
We are now ready to obtain our crucial result.
\begin{lemma}\label{lem:FAsympExpansion}
We have that, for any $n\in\Nb$, and uniformly for compact sets of $v$ the following asymptotic expansions hold
\begin{equation}\label{eq:FAsympExpansion}
F(v,t)\sim \eta(0,v)+\sum_{l=1}^{n}\minusone{l}\lbrb{l+1}\frac{v^{l}}{t^{l}}\eta(-l,v)+h(v)o\lbrb{\frac{1}{t^n}}
\end{equation}
and 
\begin{equation}\label{eq:IntFAsympExpansion}
\IntZeroInf F(v,t)dv\sim 1+\sum_{l=1}^{n}\minusone{l}\frac{2^{1}\lbrb{l+1}!}{t^{l}}+o\lbrb{\frac{1}{t^n}}
\end{equation}
\end{lemma}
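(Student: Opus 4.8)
The plan is to extract both expansions from the exact identity \eqref{eq:Frepresentation1}, which was already produced by Mellin inversion followed by sliding the contour across the poles of $s\mapsto\pi(1-s)/\sin(\pi s)$ at $0,-1,\dots,-n$. I fix once and for all a vertical line $\Re(s)=c$ with $c\in(-n-1,-n)$, say $c=-n-\tfrac12$. Then \eqref{eq:Frepresentation1} already displays the first $n+1$ terms of \eqref{eq:FAsympExpansion} exactly, so the first assertion reduces to showing that the leftover line integral
\[
R_n(v,t):=\twopi\IntComplexLine{v^{-s}t^{s}\frac{\pi(1-s)}{\sin(\pi s)}\eta(s,v)\,ds}{c}
\]
is of the form $h(v)\,o(1/t^{n})$, uniformly on compact subsets of $(0,\infty)$. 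For the second assertion I would integrate \eqref{eq:Frepresentation1} in $v$ over $(0,\infty)$, compute $\IntZeroInf\eta(0,v)\,dv$ and $\IntZeroInf v^{l}\eta(-l,v)\,dv$ in closed form, and bound $\IntZeroInf R_n(v,t)\,dv$; note $F(\cdot,t)$ is absolutely integrable for every $t$, since by the triangle inequality $|F(v,t)|\le\sum_{j\ge1}\pi^{2}j^{2}e^{-\pi^2j^2v/2}$, which is bounded by a constant times $v^{-3/2}$ near $0$ and is exponentially small at infinity, uniformly in $t$.

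To bound $R_n(v,t)$ I take moduli and factor out $v^{-c}t^{c}$; since $c<-n$ one has $t^{c}=o(1/t^{n})$, so it remains to see that $v^{-c}\int_{-\infty}^{\infty}|1-s|\,|\sin(\pi s)|^{-1}\,|\eta(c+i\theta,v)|\,d\theta$ is finite and locally bounded in $v$. Here I would insert the integral representation \eqref{eq:etaIntegralRep}, giving
\[
|\eta(c+i\theta,v)|\le\frac{2^{c+1}}{|\Gamma(\{c\}+1-i\theta)|}\IntZeroInf|\eta(-1-n,u+v)|\,u^{\{c\}}\,du .
\]
The $u$-integral is bounded uniformly for $v$ in a compact subset of $(0,\infty)$ (in fact for all $v>0$): by Lemma \ref{lem:eta} the function $\eta(-1-n,\cdot)=\minusone{n+1}2^{n+1}G^{(n+2)}(\cdot,\tfrac12)$ decays exponentially at infinity and tends to $0$ as its argument tends to $0^{+}$, the factor $e^{-1/(2w)}$ in \eqref{eq:GsmallAsymp} beating every negative power of $w$. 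Combining $|\sin(\pi(c+i\theta))|^{-1}\sim Ce^{-\pi|\theta|}$ from \eqref{eq:expDecaySin} with Stirling's estimate $|\Gamma(\{c\}+1-i\theta)|^{-1}\asymp|\theta|^{-\{c\}-1/2}e^{\pi|\theta|/2}$ as $|\theta|\to\infty$, the integrand behaves like $|\theta|^{1/2-\{c\}}e^{-\pi|\theta|/2}$, so the $\theta$-integral converges. This yields $|R_n(v,t)|\le h(v)\,t^{c}=h(v)\,o(1/t^{n})$ with $h$ bounded on $(0,\infty)$, proving \eqref{eq:FAsympExpansion}.

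For \eqref{eq:IntFAsympExpansion} I integrate \eqref{eq:Frepresentation1} in $v$. The constant term gives, by Lemma \ref{lem:eta} together with \eqref{eq:F} and \eqref{eq:FPoisson},
\[
\IntZeroInf\eta(0,v)\,dv=\IntZeroInf G'\lbrb{v,\tfrac12}\,dv=G\lbrb{\infty,\tfrac12}-G\lbrb{0^{+},\tfrac12}=0-(-1)=1 .
\]
For $l\ge1$, Lemma \ref{lem:eta} gives $\eta(-l,v)=\minusone{l}2^{l}G^{(l+1)}(v,\tfrac12)$, and $l+1$ integrations by parts kill every boundary term, since $v^{k}G^{(k)}(v,\tfrac12)\to0$ both as $v\to0^{+}$ (again by the $e^{-1/(2v)}$ in \eqref{eq:GsmallAsymp}) and as $v\to\infty$; this reduces $\IntZeroInf v^{l}G^{(l+1)}(v,\tfrac12)\,dv$ to $\minusone{l}l!\IntZeroInf G'(v,\tfrac12)\,dv=\minusone{l}l!$, hence $\IntZeroInf v^{l}\eta(-l,v)\,dv=2^{l}l!$ and the $l$th summand of \eqref{eq:IntFAsympExpansion} follows upon multiplication by $\minusone{l}(l+1)t^{-l}$. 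Finally, by Fubini (legitimate thanks to the bounds above, and because $-c>-1$ makes $v^{-c}$ integrable at $0$)
\[
\IntZeroInf R_n(v,t)\,dv=\twopi\IntComplexLine{t^{s}\frac{\pi(1-s)}{\sin(\pi s)}\lbrb{\IntZeroInf v^{-s}\eta(s,v)\,dv}ds}{c},
\]
and the estimates of the previous paragraph — now with one further, still convergent, $v$-integration — bound this by $Ct^{c}=o(1/t^{n})$. Assembling the three pieces gives \eqref{eq:IntFAsympExpansion}.

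The hard part is the uniform control of $\eta$ along the shifted contour: one needs a bound on $\eta(c+i\theta,v)$ (respectively on $\IntZeroInf v^{-s}\eta(s,v)\,dv$) that is at once integrable in $\theta=\Im(s)$ and locally bounded and integrable in $v$. A termwise bound on the alternating series defining $\eta$ destroys too much cancellation; the saving feature is the near-cancellation between the exponential growth $e^{\pi|\theta|/2}$ of $|\Gamma(\{c\}+1-i\theta)|^{-1}$ and the exponential decay $e^{-\pi|\theta|}$ of $|\sin(\pi s)|^{-1}$. Making the integral representation \eqref{eq:etaIntegralRep} carry out this bookkeeping, while retaining control of the $v$-dependence down to $v\to0^{+}$ — where one must know that $\eta(-1-n,\cdot)$ vanishes, which is where Lemma \ref{lem:eta} and the asymptotics \eqref{eq:GsmallAsymp} enter — is the delicate point.
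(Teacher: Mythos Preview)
Your proof is correct and follows essentially the same route as the paper: start from \eqref{eq:Frepresentation1}, evaluate the residue terms via $\eta(-l,v)=\minusone{l}2^{l}G^{(l+1)}(v,\tfrac12)$ and integration by parts, and control the contour integral on $\Re(s)=c\in(-n-1,-n)$ using the integral representation \eqref{eq:etaIntegralRep} together with the decay balance between $|\sin(\pi s)|^{-1}$ and $|\Gamma(\{c\}+1-i\theta)|^{-1}$.

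Two small remarks. First, for \eqref{eq:FAsympExpansion} alone the paper does not invoke \eqref{eq:etaIntegralRep}: it uses the crude, $\theta$-independent bound $|\eta(s,v)|\le\sum_{j\ge1}(\pi j)^{2-2c}e^{-\pi^2 j^2 v/2}$, which is finite for each $v>0$ and locally bounded; combined with $|\sin(\pi s)|^{-1}\sim Ce^{-\pi|\theta|}$ this already makes the $\theta$-integral converge. So your concern that a termwise bound ``destroys too much cancellation'' is unfounded for the pointwise expansion --- the integral representation is only needed once you integrate in $v$, since the termwise bound behaves like $v^{-3/2}$ after multiplication by $v^{-c}$ and is not integrable at $0$. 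Second, your assertion that $h$ is bounded on all of $(0,\infty)$ is not quite right: with $c=-n-\tfrac12$ the factor $v^{-c}=v^{n+1/2}$ blows up at infinity. What you actually need (and what your argument gives) is that $h$ is locally bounded on $(0,\infty)$ for \eqref{eq:FAsympExpansion}, and that $\IntZeroInf h(v)\,dv<\infty$ for \eqref{eq:IntFAsympExpansion}; the latter follows after the change of variables $(u,v)\mapsto(u,u+v)$ exactly as in the paper, reducing to a Beta integral times $\IntZeroInf|\eta(-n-1,x)|x^{\,n+2\{c\}}\,dx$, which is finite by \eqref{eq:etaAsympSmall} and \eqref{eq:etaAsympLarge}.
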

\begin{proof}
Recall that $s=c+i\theta$. Relation \eqref{eq:FAsympExpansion} holds immediately from \eqref{eq:Frepresentation1} and the fact that from \eqref{eq:eta} 
\[\labsrabs{\eta(s,v)}\leq \sumoneinf \pi^{2c}j^{2c}\expeigenvalue{v}<\infty. \]
To prove \eqref{eq:IntFAsympExpansion} we observe that all terms involving $v^{l}\eta(-l+1,v)$ are absolutely integrable thanks to \eqref{eq:etaAsympSmall} and \eqref{eq:etaAsympLarge}. Indeed at infinity all is clear from \eqref{eq:etaAsympLarge} whereas we apply \eqref{eq:etaAsympSmall} as follows ignoring any constants with respect to $v$: 
\begin{align*}
&\int_{0}^{1}v^{l}\labsrabs{\eta(-l+1,v)}dv\lesssim\int_{0}^{1}\frac{1}{v^{l+1}}\suminfinf\lbrb{j-\frac{1}{2}}^{2l}e^{-2\frac{\lbrb{j-\frac{1}{2}}^2}{v}}dv=\\
&\suminfinf\int_{1}^{\infty}\lbrb{j-\frac{1}{2}}^{2l} v^{l-1}e^{-2\lbrb{j-\frac{1}{2}}^2v}dv=\suminfinf \int_{\lbrb{j-\frac{1}{2}}^{2}}^{\infty} v^{l-1}e^{-2v}dv<\infty.
\end{align*}
 Also we conclude from $\eta(0,v)=G'\lbrb{v,\frac{1}{2}}$ that
\[\IntZeroInf \eta(0,v)dv=G\lbrb{\infty,\frac{1}{2}}-G\lbrb{0,\frac{1}{2}}=1,\]
where the latter integration to $1$ can be concluded from \eqref{eq:F} and \eqref{eq:FPoisson}. The other terms in \eqref{eq:IntFAsympExpansion}, namely
\[\IntZeroInf v^{l}\eta\lbrb{-l,v}dv=2^l l!\]
 can be deduced by using that $\eta(-l,0)=\minusone{l}2^{1}G^{(l+1)}\lbrb{v,\frac{1}{2}}$ from Lemma \ref{lem:eta}, integration by parts which holds due to \eqref{eq:etaAsympSmall} and \eqref{eq:etaAsympLarge}. 

 So it remains to consider the integral term. Invoking \eqref{eq:etaIntegralRep} we note that
\begin{align*}
&\tilde H(v):=\labsrabs{\twopi\IntComplexLine{v^{-s}t^{s} \frac{\pi(1-s)}{\sin(\pi s)}\eta(s,v)ds}{c}} \leq\\
&\frac{2^c t^c}{v^c} \int_{-\infty}^{\infty} \frac{|1-c+i\theta|}{\labsrabs{\sin(\pi (c+i\theta))\Gamma\lbrb{\{c\}+1-i\theta}}}\int_{0}^{\infty}\labsrabs{\eta(-n-1,u+v)}u^{\{c\}}dud\theta. 
\end{align*} 
Upon integration with respect to $v$ and changing variables $x=u+v,y=u$ we get
\begin{align*}
&\IntZeroInf \tilde H(v)dv\leq\\
& 2^ct^c \int_{-\infty}^{\infty} \frac{|1-c+i\theta|}{\labsrabs{\sin(\pi (c+i\theta))\Gamma\lbrb{\{c\}+1-i\theta}}}\IntZeroInf \labsrabs{\eta(-n-1,x)}\int_{0}^{x}y^{\{c\}}(x-y)^{-c}dydxd\theta\leq\\
&2^ct^c Beta(\{c\}+1,1-c)\int_{-\infty}^{\infty} \frac{|1-c+i\theta|}{\labsrabs{\sin(\pi (c+i\theta))\Gamma\lbrb{\{c\}+1-i\theta}}}\IntZeroInf \labsrabs{\eta(-n-1,x)}x^{1+n}dxd\theta<\infty,
\end{align*}
where we have used that $c=-\{c\}-n$.
The finiteness of the last integral is a similar consequence from \eqref{eq:etaAsympSmall} and \eqref{eq:etaAsympLarge} as before and the decay of $\labsrabs{\sin(\pi (c+i\theta))}\sim Ce^{\pi\labsrabs{\theta}}$, see \eqref{eq:expDecaySin} which surpasses that of $\labsrabs{\Gamma\lbrb{\{c\}+1-i\theta}}\asymp e^{-\frac{\pi}{2}\labsrabs{\theta}}$.
\end{proof}
\section{The function $J_1(a,t,\nu)$}\label{sec:J_1}
\subsection{The auxillary functions $\Gamma$ and $H$}\label{subsec:GammaH}

We recall that 
\begin{equation}\label{eq:HRecall}
H\lbrb{a,u,\rho,\gamma,h,1}=-\sumoneinf \frac{\pi j}{\pi^2j^2u\rho+1}\expeigenvalue{u\gamma}\sin\lbrb{\pi j+\pi j a h},
\end{equation}
see \eqref{eq:H}. Denote by
\begin{align}\label{eq:underlyingFunction}
\nonumber&\Gamma\lbrb{a,u,\rho,\gamma,h,1}=\sumoneinf \frac{1}{\pij u\rho+1}\expeigenvalue{u\gamma}\cos\lbrb{\pi j a h+\pi j}\\
\nonumber&\sumoneinf \IntZeroInf\expeigenvalue{u\gamma}e^{-\pij u\rho v-v} \cos\lbrb{2\pi j\lbrb{\frac{ah}{2}+\frac{1}{2}}}dv=\\
\nonumber&\IntZeroInf\sumoneinf \expeigenvalue{u\gamma}e^{-\pij u\rho v} \cos\lbrb{2\pi j\lbrb{\frac{ah}{2}+\frac{1}{2}}} e^{-v}dv=\\
&\frac{1}{2}\IntZeroInf G\lbrb{u\gamma+2u\rho v,\frac{ah+1}{2}}e^{-v}dv,
\end{align}
where the interchange of integration and summation is obviously possible for any fixed pair $u>0,\gamma>0$ and we have used \eqref{eq:F} to identify the expressions with $G(\cdot,\cdot)$. Applying \eqref{eq:FPoisson} we see further that \[\Gamma\lbrb{a,u,\rho,\gamma,h,1}=\frac{1}{2}\IntZeroInf\lbrb{\frac{1}{\sqrt{2\pi u\lbrb{\gamma+2\rho v}}}} \suminfinf e^{-\frac{2\lbrb{j-\frac{ah}{2}-\frac{1}{2}}^2}{u\gamma+2\frac{vu}{t}}} e^{-v}dv-\frac{1}{2}\]
and thus 
\begin{align}\label{eq:derivativeGamma1}
\nonumber &\frac{\partial \Gamma}{\partial \gamma}\lbrb{a,u,\frac{1}{t},1,\frac{\sqrt{u}}{\sqrt{t}},1}=\\
\nonumber-&\frac{1}{4\sqrt{2\pi u}}\IntZeroInf \frac{1}{\lbrb{1+2\frac{v}{t}}^{\frac{3}{2}}}\suminfinf e^{-\frac{2\lbrb{j-\frac{a\frac{\sqrt{u}}{\sqrt{t}}}{2}-\frac{1}{2}}^2}{u+2\frac{vu}{t}}}e^{-v}dv +\\
&\frac{1}{2\sqrt{2\pi u^3}}\IntZeroInf \frac{1}{\lbrb{1+2\frac{v}{t}}^{\frac{5}{2}}}\suminfinf\lbrb{j-\frac{a\frac{\sqrt{u}}{\sqrt{t}}}{2}-\frac{1}{2}}^2 e^{-\frac{2\lbrb{j-\frac{a\frac{\sqrt{u}}{\sqrt{t}}}{2}-\frac{1}{2}}^2}{u+2\frac{vu}{t}}}e^{-v}dv,
\end{align}
where the interchange of the derivative in $\gamma$, for any $u>0$, and the integral is clear due to the absolute integrability of the expressions under the integrals above. This expression \eqref{eq:derivativeGamma1} will be useful when $u\leq 1$. Otherwise, when $u\geq 1$, we use the following which is immediate from the definition of $\Gamma$, namely
 \begin{align}\label{eq:derivativeGamma2}
&\frac{\partial \Gamma}{\partial \gamma}\lbrb{a,u,\frac{1}{t},1,\frac{\sqrt{u}}{\sqrt{t}},1}=-\frac{u}{2}\sumoneinf \frac{\pij}{\pij \frac{u}{t}+1}\expeigenvalue{u}\cos\lbrb{\pi j a\frac{\sqrt{u}}{\sqrt{t}}+\pi j}.
\end{align}
  Clearly upon differentiation we get
\begin{equation}\label{eq:HandGamma}
\frac{\partial H}{\partial a}\lbrb{a,u,\frac{1}{t},1,\frac{\sqrt{u}}{\sqrt{t}},1}=\frac{2}{t^{1/2}u^{1/2}}\frac{\partial \Gamma}{\partial \gamma}\lbrb{a,u,\frac{1}{t},1,\frac{\sqrt{u}}{\sqrt{t}},1}.
\end{equation}
These representations allow for the following claim
\begin{proposition}\label{prop:unifromIntegrability}
We have that, for any $t>100a^2$,
\begin{equation}\label{eq:unifromIntegrability}
\IntZeroInf \frac{1}{\sqrt{u}}\labsrabs{\frac{\partial H}{\partial a}\lbrb{a,u,\frac{1}{t},1,\frac{\sqrt{u}}{\sqrt{t}},1}}du=\frac{2}{\sqrt{t}}\IntZeroInf\frac{1}{u}\labsrabs{\frac{\partial \Gamma}{\partial \gamma}\lbrb{a,u,\frac{1}{t},1,\frac{\sqrt{u}}{\sqrt{t}},1}}du <\infty
\end{equation}
and even 
\begin{equation}\label{eq:unifromBounds}
\sup_{t>100a^2}\sup_{0\leq b\leq a}\frac{1}{\sqrt{u}}\labsrabs{\frac{\partial H}{\partial a}\lbrb{b,u,\frac{1}{t},1,\frac{\sqrt{u}}{\sqrt{t}},1}}\leq f(u)
\end{equation}
\begin{equation}\label{eq:unifromBounds1}
\sup_{t>100a^2}\sup_{0\leq b\leq a}\frac{2}{u}\labsrabs{\frac{\partial \Gamma}{\partial \gamma}\lbrb{b,u,\frac{1}{t},1,\frac{\sqrt{u}}{\sqrt{t}},1}}\leq f(u)
\end{equation}
with $\IntZeroInf f(u)du<\infty.$
\end{proposition}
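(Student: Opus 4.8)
The plan is to derive both \eqref{eq:unifromIntegrability} and \eqref{eq:unifromBounds} from the single pointwise bound \eqref{eq:unifromBounds1}, and to prove the latter by splitting the range of $u$ at $1$ and using the two representations of $\partial\Gamma/\partial\gamma$ already at our disposal. By \eqref{eq:HandGamma} one has, pointwise in $u$,
\[
\frac1{\sqrt u}\labsrabs{\frac{\partial H}{\partial a}\lbrb{b,u,\tfrac1t,1,\tfrac{\sqrt u}{\sqrt t},1}}=\frac2{\sqrt t}\cdot\frac1u\labsrabs{\frac{\partial\Gamma}{\partial\gamma}\lbrb{b,u,\tfrac1t,1,\tfrac{\sqrt u}{\sqrt t},1}},
\]
so once \eqref{eq:unifromBounds1} is established, \eqref{eq:unifromIntegrability} is just its integral and \eqref{eq:unifromBounds} follows after absorbing the bounded factor $\tfrac2{\sqrt t}\le\tfrac1{5a}$ (valid since $t>100a^2$) into $f$. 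Thus it suffices to construct an integrable $f$ with $\tfrac2u\labsrabs{\partial_\gamma\Gamma\lbrb{b,u,\tfrac1t,1,\tfrac{\sqrt u}{\sqrt t},1}}\le f(u)$ for all $0\le b\le a$ and all $t>100a^2$.

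For $u\ge1$ I would use the elementary representation \eqref{eq:derivativeGamma2}: bounding $\pi^2j^2/(\pi^2j^2u/t+1)\le\pi^2j^2$ and $\labsrabs{\cos}\le1$ gives $\tfrac2u\labsrabs{\partial_\gamma\Gamma}\le\sum_{j\ge1}\pi^2j^2e^{-\pi^2j^2u/2}$, which by \eqref{eq:F} equals $\labsrabs{G'(u,0)}$ in the notation of Lemma \ref{lem:G(v,x)}. Writing $e^{-\pi^2j^2u/2}\le e^{-\pi^2j^2/2}e^{-\pi^2(u-1)/2}$ for $j\ge1,\ u\ge1$ bounds this by $C_0e^{-\pi^2(u-1)/2}$, which is integrable on $[1,\infty)$ and independent of $t$ and $b$.

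For $0<u\le1$ I would use \eqref{eq:derivativeGamma1}. The structural input here is the hypothesis $t>100a^2$: with $b\le a$ and $u\le1$ it forces $\tfrac{b\sqrt u}{2\sqrt t}\le\tfrac1{20}$, so the effective shift $\beta:=\tfrac12+\tfrac{b\sqrt u}{2\sqrt t}$ lies in $[\tfrac12,\tfrac{11}{20}]$, whence $\labsrabs{j-\beta}\ge\tfrac9{20}j$ for every $j\ge1$ and $\sum_{j\ge1}(j-\beta)^{-2}\le C$ uniformly in $t,b$. In the first $v$-integral of \eqref{eq:derivativeGamma1} I would estimate the summand by $e^{-x}\le\tfrac1{ex}$ with $x=2(j-\beta)^2/(u(1+2v/t))$, so that $\sum_je^{-x}\le Cu(1+2v/t)$; the factor $(1+2v/t)^{-3/2}$ then leaves $Cu(1+2v/t)^{-1/2}\le Cu$, and after integrating against $e^{-v}\,dv$ and multiplying by the prefactor $\sim u^{-1/2}$ and by $\tfrac2u$ one obtains a contribution $\le Cu^{-1/2}$. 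In the second $v$-integral, which carries the extra weight $(j-\beta)^2$ and a prefactor $\sim u^{-3/2}$, I would write $(j-\beta)^2e^{-x}=\tfrac{u(1+2v/t)}2\,xe^{-x}$ and use $xe^{-x}\le C/x$ (i.e. $x^2e^{-x}\le C$), so that $\sum_j(j-\beta)^2e^{-x}\le Cu^2(1+2v/t)^2$; combined with $(1+2v/t)^{-5/2}$, the $e^{-v}$ weight, the prefactor $\sim u^{-3/2}$ and the factor $\tfrac2u$ this is again $\le Cu^{-1/2}$. Hence $f(u)=Cu^{-1/2}$ works on $(0,1]$, uniformly in $t>100a^2$ and $b\le a$, and $\IntZeroInf f(u)\,du<\infty$ is then clear.

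I expect the only delicate point to be the uniformity in $t$ when $t$ is only marginally larger than $100a^2$: then $1+2v/t$ is large over the bulk of the mass of $e^{-v}\,dv$, and the crude bound $e^{-x}\le1$ would leave an uncontrolled positive power of $1+2v/t$. The remedy is precisely the one above — trade just enough of the Gaussian decay of the summands for polynomial decay, one power of $x$ in the first integral and two in the second, so that the surviving power of $1+2v/t$ is non-positive and the $e^{-v}$ weight finishes the job. Everything else is routine bookkeeping: matching powers of $u$ so that the negative powers of $u$ (namely $u^{-3/2}$ and $u^{-5/2}$ appearing in $\tfrac2u\,\partial_\gamma\Gamma$) collapse to the integrable $u^{-1/2}$, and using the separation $\labsrabs{j-\beta}\gtrsim j$ — which is exactly what the hypothesis $t>100a^2$ buys — to keep the $j$-sums finite.
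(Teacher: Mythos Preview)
Your argument is correct. For $u\ge1$ you and the paper do the same thing; for $0<u\le1$ you take a genuinely different, more elementary route. The paper writes down the ``natural'' majorant
\[
f(u)=\frac{C}{u^{5/2}}\int_0^\infty\sum_{j\in\mathbb Z}\Bigl(j-\tfrac13\Bigr)^2\exp\!\Bigl(-\tfrac{2(j-1/3)^2}{u(1+v/(50a^2))}\Bigr)e^{-v}\,dv,
\]
which still contains the $v$-integral and depends on $a$, and then spends a page proving $\int_0^1 f(u)\,du<\infty$ via the substitution $u\mapsto 1/w$ and a case split of the $v$-integral at $v_j=50a^2|j-\tfrac13|-1$. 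You instead trade Gaussian decay for polynomial decay \emph{before} integrating: one power of $x$ in the first sum ($e^{-x}\le 1/(ex)$) and two in the second ($x^2e^{-x}\le C$), so that the surviving power of $1+2v/t$ is non-positive, the $e^{-v}$ integral is just a constant, and the answer collapses to the explicit $f(u)=Cu^{-1/2}$ on $(0,1]$. This is shorter, avoids the $v$-splitting entirely, and as a bonus yields an $a$-independent $f$ in \eqref{eq:unifromBounds1}; the only place $a$ re-enters is the harmless factor $1/\sqrt t\le 1/(10a)$ when passing to \eqref{eq:unifromBounds}. The paper's approach, on the other hand, keeps closer to the structure of \eqref{eq:derivativeGamma1} and makes the role of the Gaussian tails more transparent.
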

\begin{proof}
We start with \eqref{eq:unifromBounds1}. Clearly, when $u\geq 1$, a trivial bound using \eqref{eq:derivativeGamma2} gives  \eqref{eq:unifromBounds1} with 
\[f(u):=u\suminfinf \pij\expeigenvalue{u} \]
 and  $\int_{1}^{\infty}f(u)du<\infty$. Assume that $u\leq 1$. Then $\sup_{0 \leq b\leq a}b\frac{\sqrt{u}}{\sqrt{t}}\leq 1/10$ for $t>100a^2,u\leq 1$, and  using this in \eqref{eq:derivativeGamma1} the following estimate is obtained
\begin{align*}
\sup_{t\geq 100a^2}\sup_{0 \leq b\leq a}\labsrabs{\frac{2}{u}\frac{\partial \Gamma}{\partial \gamma}\lbrb{b,u,\frac{1}{t},1,\frac{\sqrt{u}}{\sqrt{t}},1}}\leq \frac{C}{u^{\frac{5}{2}}}\IntZeroInf\suminfinf \lbrb{j-\frac{1}{3}}^2e^{-2\frac{\lbrb{j-\frac{1}{3}}^2}{u\lbrb{1+\frac{v}{50a^2}}}} e^{-v}dv:=f(u)
\end{align*}
for some $C>0$ big enough. Put $b_j=b_j(v,a):=\frac{\lbrb{j-\frac{1}{3}}^2}{\lbrb{1+\frac{v}{50a^2}}}$ and we get upon changing variables $u\mapsto 1/w$ that
\begin{align*}
&\int_{0}^{1}f(u)du=C\suminfinf \lbrb{j-\frac{1}{3}}^2 \IntZeroInf \int_{1}^{\infty}\sqrt{w}e^{-wb_j}dwe^{-v}dv=\\
&C\suminfinf \lbrb{j-\frac{1}{3}}^2 \IntZeroInf b^{-\frac{3}{2}}_{j} \int_{b_j}^{\infty}\sqrt{w}e^{-w}dwe^{-v}dv\leq \\
&C\suminfinf \frac{1}{\labsrabs{j-\frac{1}{3}}}\int_{0}^{\infty}\lbrb{1+\frac{v}{50a^2}}^{\frac{3}{2}}\int_{b_j}^{\infty}\sqrt{w}e^{-w}dwe^{-v}dv.
\end{align*}
Splitting the integration in $v$ at the point where $b_j=\labsrabs{j-\frac{1}{3}}$, namely $v_j=50a^2\labsrabs{j-\frac{1}{3}}-1$, we get that
\begin{align*}
&\int_{0}^{1}f(u)du\leq C\IntZeroInf \lbrb{1+\frac{v}{50a^2}}^{\frac{3}{2}}e^{-v}dv\times \suminfinf \frac{1}{\labsrabs{j-\frac{1}{3}}}\int_{\labsrabs{j-\frac{1}{3}}}^{\infty}\sqrt{w}e^{-w}dw+\\
& C\IntZeroInf \sqrt{w}e^{-w}dw\times\suminfinf \frac{j^2}{\lbrb{j-\frac{1}{3}}^3}\int_{v_j}^{\infty}\lbrb{1+\frac{v}{50a^2}}^{\frac{3}{2}}e^{-v}dv<\infty.
\end{align*}
This proves \eqref{eq:unifromBounds1}.\\
When $u\leq 1$, $t>100a^2$, we get from \eqref{eq:HandGamma} and \eqref{eq:derivativeGamma1} that
\begin{align*}
&\labsrabs{\frac{\partial H}{\partial a}\lbrb{a,u,\frac{1}{t},1,\frac{\sqrt{u}}{\sqrt{t}},1}}\leq \frac{C}{u^{\frac{3}{2}}}\IntZeroInf \suminfinf \lbrb{j-\frac{1}{3}}^2e^{-2\frac{\lbrb{j-\frac{1}{3}}^2}{u\lbrb{1+\frac{v}{50a^2}}}}e^{-v}dv
\end{align*}
and the rest is the same as in \eqref{eq:unifromBounds1}. Therefore
\[\IntZeroOne\frac{1}{\sqrt{u}}\labsrabs{\frac{\partial H}{\partial a}\lbrb{a,u,\frac{1}{t},1,\frac{\sqrt{u}}{\sqrt{t}},1}}du<\infty.\]
When $u>1$ we use \eqref{eq:HandGamma} and \eqref{eq:derivativeGamma2} and the fact that $t>100a^2$ to get easily that
\begin{align*}
&\labsrabs{\frac{\partial H}{\partial a}\lbrb{a,u,\frac{1}{t},1,\frac{\sqrt{u}}{\sqrt{t}},1}}\leq Cue^{-u}
\end{align*}
and henceforth
\[\int_{1}^\infty\frac{1}{\sqrt{u}}\labsrabs{\frac{\partial H}{\partial a}\lbrb{a,u,\frac{1}{t},1,\frac{\sqrt{u}}{\sqrt{t}},1}}du<\infty.\]
Inequality \eqref{eq:unifromBounds} is immediate from the computations above and \eqref{eq:HandGamma}. Finally, \eqref{eq:unifromIntegrability} follows from  \eqref{eq:unifromBounds} and \eqref{eq:unifromBounds1}. 
\end{proof}
\subsection{Main results on $J_1(a,t,\nu)$}\label{subsec:J_1Results}
For any $0<\nu\leq \infty$ define the function
\begin{align}\label{eq:J_1General}
\nonumber &J_1(t,a,\nu):=\frac{1}{\sqrt{t}}\int_{\frac{t}{\lbrb{a+\nu\sqrt{t}}^2}}^{\frac{t}{a^2}} \frac{1}{\sqrt{u}}\sumoneinf \minusone{j+1}\frac{\pi j}{\pi^2j^2\frac{u}{t}+1}\expeigenvalue{u} \eigenfunction{\pi j a \frac{\sqrt{u}}{\sqrt{t}}}du=\\
&\frac{1}{\sqrt{t}}\int_{\frac{t}{\lbrb{a+\nu\sqrt{t}}^2}}^{\frac{t}{a^2}}\frac{1}{\sqrt{u}}H\lbrb{a,u,\frac{1}{t},1,\frac{\sqrt{u}}{\sqrt{t}},1}du.
\end{align}
 Note that $J_1(t,a,\infty)=J_1(t,a)$ in \eqref{eq:J_1}.
We are now ready to study $J_1(t,a,\nu)$. 
\begin{lemma}\label{lem:J_1}
For any $0<\nu\leq \infty$, we have that
\begin{equation}\label{eq:J_1limit}
\lim\ttinf{t}tJ_1(t,a,\nu)=\lim\ttinf{t}\int_{0}^{a}t\frac{\partial J_1(t,b,\nu)}{\partial b}db=-aG\lbrb{\frac{1}{\nu^2},\frac{1}{2}},
\end{equation}
where $G\lbrb{\nu,\frac{1}{2}}$ is defined in \eqref{eq:F}. Moreover, for some    function $f\colon[0,\infty)\mapsto[0,\infty)$
\begin{equation}\label{eq:uniformBoundsToProve1}
\sup_{t>100a^2}\sup_{b\leq a}\labsrabs{tJ_1(t,b,\nu)}\leq a\IntZeroInf f(u)du<\infty
\end{equation}
and the convergence in \eqref{eq:J_1limit} is uniform on $a$-compact intervals.
\end{lemma}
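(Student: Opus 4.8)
The plan is to work directly from the integral representation \eqref{eq:J_1General}, using the auxiliary function $H$ and its $a$-derivative, whose relation to $\partial\Gamma/\partial\gamma$ is recorded in \eqref{eq:HandGamma}. The starting observation is that $J_1(t,0,\nu)=0$, since $H(0,u,\rho,\gamma,h,1)=0$ (each term contains $\sin(\pi j)=0$), so we may write $J_1(t,a,\nu)=\int_0^a \frac{\partial J_1}{\partial b}(t,b,\nu)\,db$. Differentiating under the integral sign — which is legitimate for $t>100a^2$ by the domination provided by Proposition \ref{prop:unifromIntegrability} — and multiplying by $t$, the key quantity becomes
\[
t\frac{\partial J_1}{\partial b}(t,b,\nu)=\frac{t}{\sqrt{t}}\int_{\frac{t}{(b+\nu\sqrt t)^2}}^{\frac{t}{b^2}}\frac{1}{\sqrt u}\,\frac{\partial H}{\partial a}\lbrb{b,u,\tfrac1t,1,\tfrac{\sqrt u}{\sqrt t},1}\,du
=\frac{2}{\sqrt t}\int_{\frac{t}{(b+\nu\sqrt t)^2}}^{\frac{t}{b^2}}\frac{1}{u}\,\frac{\partial\Gamma}{\partial\gamma}\lbrb{b,u,\tfrac1t,1,\tfrac{\sqrt u}{\sqrt t},1}\,du.
\]
Here I plan to carry out a change of variables $u\mapsto v$ (essentially $u=v t$, consistent with the scalings used before \eqref{eq:J_1}) so that the limits of integration become $\frac{1}{(b/\sqrt t+\nu)^2}$ and $\frac{t}{b^2}$, the former converging to $\frac{1}{\nu^2}$ and the latter to $\infty$. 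Using the representation \eqref{eq:derivativeGamma2} of $\partial\Gamma/\partial\gamma$ valid for $u\ge1$ (which is the relevant regime once $t$ is large, since the lower limit stays bounded and $b/\sqrt t\to0$), the integrand converges pointwise, and one identifies $\frac2u\frac{\partial\Gamma}{\partial\gamma}(0,u,0,1,0,1)=G'\lbrb{u,\tfrac12}=\sum_{j}\minusone{j+1}\pij e^{-\pij u/2}$ as in the computation following \eqref{eq:Y1}. Integrating this in $u$ over $(1/\nu^2,\infty)$ gives $-G\lbrb{\frac1{\nu^2},\frac12}$ once we use $G(\infty,\frac12)=0$; integrating then in $b$ over $(0,a)$ yields the claimed limit $-aG\lbrb{\frac1{\nu^2},\frac12}$.

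To make the pointwise convergence of the integrand into convergence of the integral, and simultaneously to obtain the uniform bound \eqref{eq:uniformBoundsToProve1}, I would invoke dominated convergence with the dominating function supplied by Proposition \ref{prop:unifromIntegrability}: the bounds \eqref{eq:unifromBounds}, \eqref{eq:unifromBounds1} give $\frac1{\sqrt u}\labsrabs{\frac{\partial H}{\partial a}(b,u,\frac1t,1,\frac{\sqrt u}{\sqrt t},1)}\le f(u)$ and $\frac2u\labsrabs{\frac{\partial\Gamma}{\partial\gamma}(b,u,\frac1t,1,\frac{\sqrt u}{\sqrt t},1)}\le f(u)$ uniformly in $t>100a^2$ and $0\le b\le a$, with $f$ integrable. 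This immediately yields $\sup_{t>100a^2}\sup_{b\le a}\labsrabs{t\frac{\partial J_1}{\partial b}(t,b,\nu)}\le \int_0^\infty f(u)\,du$, hence $\sup_{t>100a^2}\sup_{b\le a}\labsrabs{tJ_1(t,b,\nu)}\le a\int_0^\infty f(u)\,du<\infty$, which is \eqref{eq:uniformBoundsToProve1}. The same domination lets us pass the limit through both integrals: DCT in $u$ gives $t\frac{\partial J_1}{\partial b}(t,b,\nu)\to -G\lbrb{\frac1{\nu^2},\frac12}$ for each fixed $b$, uniformly for $b$ in compacts (the $b$-dependence enters only through the lower limit $1/(b/\sqrt t+\nu)^2\to 1/\nu^2$ and through the arguments of the $\cos$ terms $\cos(\pi jb\sqrt u/\sqrt t)\to 1$, both controlled uniformly on $b\le a$), and then DCT in $b$ on $[0,a]$ gives $tJ_1(t,a,\nu)\to -aG\lbrb{\frac1{\nu^2},\frac12}$, uniformly on $a$-compact intervals.

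The main obstacle I anticipate is the careful bookkeeping of the two regimes $u\le1$ and $u\ge1$ near the lower endpoint of integration: when $\nu$ is large the lower limit $1/(b/\sqrt t+\nu)^2$ is small, so part of the integration range sits in the $u\le1$ regime where one must use the Poisson-summed representation \eqref{eq:derivativeGamma1} rather than \eqref{eq:derivativeGamma2}, and one has to check that the contribution of $u\in(0,1)$ both converges to the right thing and is dominated — this is exactly where the somewhat intricate estimate in the proof of Proposition \ref{prop:unifromIntegrability} (the $b_j$, $v_j$ splitting) is needed. A secondary point requiring care is the justification of differentiation under the integral sign defining $J_1$ in $b$, including the contribution of the $b$-dependent limits of integration; the endpoint terms are harmless because the integrand $\frac1{\sqrt u}H$ vanishes rapidly (exponentially) at $u=t/b^2\to\infty$ and is bounded near the lower endpoint, but this should be stated explicitly. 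Once these two technical points are in place the rest is the routine DCT argument sketched above, and the final identification $-aG\lbrb{\frac1{\nu^2},\frac12}$ follows by the same manipulation already used in deriving \eqref{eq:Y1}.
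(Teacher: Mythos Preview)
Your approach is the same as the paper's: write $J_1(t,a,\nu)=\int_0^a\partial_b J_1(t,b,\nu)\,db$, convert $\partial H/\partial a$ to $\partial\Gamma/\partial\gamma$ via \eqref{eq:HandGamma}, apply DCT through the integrable majorant of Proposition~\ref{prop:unifromIntegrability}, and identify the limiting $u$-integral as $-G(1/\nu^2,1/2)$. Two points of bookkeeping should be corrected. First, your displayed formula for $t\,\partial_b J_1$ has the wrong prefactor: combining $\sqrt t$ with \eqref{eq:HandGamma} gives $2\int \frac{1}{u}\,\partial_\gamma\Gamma\,du$, not $\frac{2}{\sqrt t}\int\cdots$; with $2/\sqrt t$ the expression would tend to $0$, contradicting the limit you then (correctly) compute. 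No change of variables is needed---the lower limit of integration is already $\frac{t}{(b+\nu\sqrt t)^2}=\frac{1}{(b/\sqrt t+\nu)^2}\to\frac{1}{\nu^2}$.

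Second, the Leibniz boundary terms are not dealt with by the remark that the integrand is ``bounded near the lower endpoint''. After multiplying by $t$ and by the derivative of the lower limit, that boundary contribution becomes $\frac{2t}{(b+\nu\sqrt t)^2}\,H\bigl(b,\frac{t}{(b+\nu\sqrt t)^2},\frac1t,1,\frac{1}{b+\nu\sqrt t},1\bigr)$, whose prefactor tends to $2/\nu^2$; so you need $H\to 0$ there, not merely $H$ bounded. In the paper this is obtained from $|\sin(\pi j+\pi j b/(b+\nu\sqrt t))|\le \pi j b/(\nu\sqrt t)$, which yields the uniform $O(a/\sqrt t)$ bound \eqref{eq:J1error}. (The upper boundary term in fact vanishes identically since $\sin(2\pi j)=0$, which is cleaner than invoking exponential decay.) With these two fixes your argument coincides with the paper's proof.
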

\begin{proof}

We will discuss $t\partial J_1(t,a,\nu)/\partial a$ showing that it converges to $1$, as $t\to\infty$. Then since $J_1(t,0,\nu)=0$ we get the answer by using the DCT in
\[J_1(t,a,\nu)=\int_{0}^{a}\frac{\partial J_1(t,b,\nu)}{\partial b}db.\]
First note that thanks to the definition of $J_1(t,a,\nu)$, Proposition \ref{prop:unifromIntegrability} and \eqref{eq:HandGamma} we obtain that
\begin{align}\label{eq:derivativeJ_1}
\nonumber &t\frac{\partial J_1(t,a,\nu)}{\partial a}=2\int_{\frac{t}{\lbrb{a+\nu\sqrt{t}}^2}}^{\frac{t}{a^2}} \frac{1}{u}\frac{\partial \Gamma}{\partial \gamma}\lbrb{a,u,\frac{1}{t},1,\frac{\sqrt{u}}{\sqrt{t}},1}du-\\
&\frac{2t}{a^2}H\lbrb{a,\frac{t}{a^2},\frac{1}{t},1,\frac{1}{a},1}+\frac{2t}{\lbrb{a+\nu\sqrt{t}}^2}H\lbrb{a,\frac{t}{\lbrb{a+\nu\sqrt{t}}^2},\frac{1}{t},1,\frac{1}{a+\nu\sqrt{t}},1}.
\end{align}
However, from \eqref{eq:HRecall} we get that
\begin{align*}
&\frac{2t}{a^2}H\lbrb{a,\frac{t}{a^2},\frac{1}{t},1,\frac{1}{a},1}=\frac{2t}{a^2}\sumoneinf \frac{\pi j}{\pi^2j^2a^{-2}+1}\expeigenvalue{\frac{t}{a^2}}\sin\lbrb{2\pi j}=0.
\end{align*}
When $\nu=\infty$ we clearly have \[\frac{2t}{\lbrb{a+\nu\sqrt{t}}^2}H\lbrb{a,\frac{t}{\lbrb{a+\nu\sqrt{t}}^2},\frac{1}{t},1,\frac{1}{a+\nu\sqrt{t}},1}=0.\]
Let us consider $0<\nu<\infty$. Then from \eqref{eq:HRecall} we get the bound
\begin{align*}\label{eq:Hbound}
\nonumber&\sup_{0\leq b\leq a}\frac{2t}{\lbrb{b+\nu\sqrt{t}}^2}\labsrabs{H\lbrb{b,\frac{t}{\lbrb{b+\nu\sqrt{t}}^2},\frac{1}{t},1,\frac{1}{b+\nu\sqrt{t}},1}}\leq\\
\nonumber& \frac{1}{\nu^2}\sup_{0\leq b\leq a}\sumoneinf \frac{\pi j}{\frac{\pij}{\lbrb{b+\nu\sqrt{t}}^2}+1}e^{-\frac{\pij}{2\lbrb{b+\nu\sqrt{t}}^2}t}\labsrabs{\sin\lbrb{\pi j+\pi j\frac{b}{b+\nu\sqrt{t}}}}\leq\\
\nonumber&\frac{1}{\nu^2}\sumoneinf \pi je^{-\frac{\pij}{4\lbrb{\frac{a^2}{t}+\nu^2}}}\sup_{0\leq b\leq a}\labsrabs{\sin\lbrb{\pi j\frac{b}{b+\nu\sqrt{t}}}}\leq\\
&\frac{a}{\nu^3\sqrt{t}}\sumoneinf \pij e^{-\frac{\pij}{4\lbrb{\frac{a^2}{t}+\nu^2}}}=o(1),
\end{align*}
which means that
\begin{equation}\label{eq:J1error}
\sup_{0\leq b\leq a}\labsrabs{t\frac{\partial J_1(t,b,\nu)}{\partial b}-2\int_{\frac{t}{\lbrb{b+\nu\sqrt{t}}^2}}^{\frac{t}{b^2}} \frac{1}{u}\frac{\partial \Gamma}{\partial \gamma}\lbrb{b,u,\frac{1}{t},1,\frac{\sqrt{u}}{\sqrt{t}},1}du}\leq\frac{a}{\nu^3\sqrt{t}}\sumoneinf \pij e^{-\frac{\pij}{4\lbrb{\frac{a^2}{t}+\nu^2}}}=o\lbrb{1}. 
\end{equation}
Henceforth  \eqref{eq:unifromBounds1} allows us to apply the DCT to demonstrate that
\[\lim\ttinf{t}t\frac{\partial J_1(t,a,\nu)}{\partial a}=2\int_{\frac{1}{\nu^2}}^{\infty} \frac{1}{u}\frac{\partial \Gamma}{\partial \gamma}\lbrb{a,u,0,1,0,1}du.\]
However, from \eqref{eq:derivativeGamma2},
\[\frac{2}{u}\frac{\partial \Gamma}{\partial \gamma}\lbrb{a,u,0,1,0,1}=\frac{2}{u}\frac{\partial \Gamma}{\partial \gamma}\lbrb{0,u,0,1,0,1}=G'\lbrb{u,\frac{1}{2}}=\sumoneinf \minusone{j+1}\pij \expeigenvalue{u}. \]
Finally from \eqref{eq:FF} we recognize that the last sum is simply $F(u,0)$ which according to \eqref{eq:F} leads to $F(u,0)=G'\lbrb{u,\frac{1}{2}}$. Therefore since $G\lbrb{\infty,\frac{1}{2}}=0$, see \eqref{eq:F},
\[\lim\ttinf{t}t\frac{\partial J_1(t,a,\nu)}{\partial a}=\int_{\frac{1}{\nu^2}}^{\infty}F(u,0)du=G\lbrb{\infty,\frac{1}{2}}-G\lbrb{\frac{1}{\nu^2},\frac{1}{2}}=-G\lbrb{\frac{1}{\nu^2},\frac{1}{2}}.\] 
Moreover, \eqref{eq:unifromIntegrability} ensures that even 
\begin{align*}
&\sup_{b\leq a} \labsrabs{t\frac{\partial J_1(t,b,\nu)}{\partial b}}\leq\IntZeroInf\frac{2}{u} \labsrabs{\frac{\partial \Gamma}{\partial \gamma}\lbrb{b,u,\frac{1}{t},1,\frac{\sqrt{u}}{\sqrt{t}},1}}du\leq \IntZeroInf f(u)du<\infty
\end{align*}
and therefore the dominated convergence theorem applies and yields our claim namely \eqref{eq:J_1limit}.
Even more this uniform bound on the derivative gives \eqref{eq:uniformBoundsToProve1} and subsequently the uniform convergence in \eqref{eq:J_1limit} for $a$-compact sets.
\end{proof}
When $\nu<\infty$ we are able to give some other useful estimates.
\begin{proposition}\label{prop:J1UniversalBound}
Let $\infty>\nu>0$. Then, for any $h>0$, we have that
\begin{equation}\label{eq:J1UniversalBound}
\labsrabs{J_1(t,a,\nu}\leq \frac{1}{a}\sumoneinf\pij e^{-\frac{\pij}{2\lbrb{\frac{a}{\sqrt{t}}+\nu}^2}}=\frac{2}{a}G'\lbrb{\frac{1}{\lbrb{\frac{a}{\sqrt{t}}+\nu}^2},0},
\end{equation}
where $G$ is defined in \eqref{eq:F}.
\end{proposition}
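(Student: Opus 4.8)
The plan is to bound the integrand of $J_1(t,a,\nu)$ in \eqref{eq:J_1General} by a crude but $t$-uniform estimate. Write $v_t:=\lbrb{\frac{a}{\sqrt t}+\nu}^{-2}$, and observe that the lower limit of integration in \eqref{eq:J_1General} is exactly $\frac{t}{\lbrb{a+\nu\sqrt t}^2}=v_t$, while the upper limit is $\frac{t}{a^2}$. Since $\labsrabs{\eigenfunction{\pi j a\frac{\sqrt u}{\sqrt t}}}\leq 1$ and $\frac{\pi j}{\pi^2 j^2\frac{u}{t}+1}\leq \pi j$ for every $u>0$, the first thing I would do is take absolute values inside \eqref{eq:J_1General} and interchange the (now nonnegative) sum and integral by Tonelli's theorem, obtaining
\begin{equation*}
\labsrabs{J_1(t,a,\nu)}\leq \frac{1}{\sqrt t}\sumoneinf \pi j\int_{v_t}^{t/a^2}\frac{1}{\sqrt u}\expeigenvalue{u}\,du .
\end{equation*}

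Next I would estimate each term of the series. Because $u\mapsto\expeigenvalue{u}=e^{-\frac{\pi^2 j^2}{2}u}$ is decreasing, on $[v_t,\,t/a^2]$ it is at most its value $\expeigenvalue{v_t}$ at the left endpoint; pulling this factor out leaves $\int_{v_t}^{t/a^2}u^{-1/2}\,du=2\lbrb{\frac{\sqrt t}{a}-\sqrt{v_t}}\leq \frac{2\sqrt t}{a}$. Substituting back,
\begin{equation*}
\labsrabs{J_1(t,a,\nu)}\leq \frac{1}{\sqrt t}\sumoneinf \pi j\cdot\expeigenvalue{v_t}\cdot\frac{2\sqrt t}{a}=\frac{2}{a}\sumoneinf \pi j\,\expeigenvalue{v_t}.
\end{equation*}
Finally, since $2\pi j\leq \pi^2 j^2$ for every $j\geq 1$ (equivalently $\pi j\geq 2$), the last sum is dominated by $\frac{1}{a}\sumoneinf \pi^2 j^2\,\expeigenvalue{v_t}=\frac{1}{a}\sumoneinf \pij\, e^{-\frac{\pij}{2}v_t}$, which is the bound in \eqref{eq:J1UniversalBound} with $v_t=\lbrb{\frac{a}{\sqrt t}+\nu}^{-2}$; the closed form in terms of $G'$ then follows by termwise differentiation in $v$ of the first series for $G(v,0)$ in \eqref{eq:F}, using $\partial_v G(v,0)=-\sumoneinf \pi^2 j^2 e^{-\frac{\pi^2 j^2}{2}v}$.

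There is no deep obstacle in this argument; it is a purely elementary, uniform bound. The only two points needing a little care are: keeping the finite upper endpoint $t/a^2$ when bounding $\int u^{-1/2}\,du$ (this is what makes that integral finite and produces the decisive factor $1/a$), and the elementary inequality $2\pi j\leq \pi^2 j^2$ used to pass from the $\pi j$-weighted series we naturally obtain to the $\pi^2 j^2$-weighted series appearing in the statement.
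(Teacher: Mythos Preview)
Your argument is correct and follows essentially the same crude-bound strategy as the paper: take absolute values inside the definition \eqref{eq:J_1General}, bound the exponential on the integration range by its value at the left endpoint $v_t$, and control the remaining integral via the upper limit $t/a^2$ to produce the factor $1/a$. The only cosmetic difference is that the paper uses $|\sin x|\leq |x|$ (which yields the factor $\pi^2 j^2$ directly and cancels the $u^{-1/2}$ in the integrand), whereas you use $|\sin|\leq 1$ and then the elementary inequality $2\pi j\leq \pi^2 j^2$ at the end; both routes land on the same bound.
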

\begin{proof}
The proof is immediate from \eqref{eq:J_1General} and $|\sin(x)|\leq |x|$. Indeed note that using this we get
\begin{align*}
&\labsrabs{J_1(t,a,\nu}\leq \frac{a}{\sqrt{t}}\int_{\frac{t}{\lbrb{a+\nu\sqrt{t}}^2}}^{\frac{t}{a^2}} \frac{1}{\sqrt{u}}\sumoneinf \pi^2j^2\expeigenvalue{u}  \frac{\sqrt{u}}{\sqrt{t}}du\leq\\
&\frac{a}{t} \frac{t}{a^2}\sumoneinf \pi^2j^2e^{-\frac{\pij}{2\lbrb{\frac{a}{\sqrt{t}}+\nu}^2}}\leq \frac{2}{a}\sumoneinf \pi^2j^2e^{-\frac{\pij}{2\lbrb{\frac{a}{\sqrt{t}}+\nu}^2}},
\end{align*}
which proves the assertion.
\end{proof}
The next result allows us to improve the uniform convergence proved in Lemma \ref{lem:J_1}.
\begin{proposition}\label{prop:J_1Improvement}
Let $\infty>\nu>0$ and $a(t)\uparrow \infty$ such that $a(t)=o\lbrb{t^{\frac{1}{4}}}$ then for some $C>0$
\begin{equation}\label{eq:J1errorImprovement}
R(t):=\sup_{0\leq b\leq a(t)}\labsrabs{t\frac{\partial J_1(t,b,\nu)}{\partial b}-2\int_{\frac{1}{\nu^2}}^{\infty} \frac{1}{u}\frac{\partial \Gamma}{\partial \gamma}\lbrb{0,u,0,1,0,1}du}\leq C\lbrb{\frac{a^2(t)}{t}+\frac{a(t)}{\sqrt{t}}}.
\end{equation}
and 
\begin{equation}\label{eq:J1errorImprovement1}
r(t):=\sup_{0\leq b\leq a(t)}\labsrabs{t J_1(t,b,\nu)-2b\int_{\frac{1}{\nu^2}}^{\infty} \frac{1}{u}\frac{\partial \Gamma}{\partial \gamma}\lbrb{0,u,0,1,0,1}du}\leq C\lbrb{\frac{a^3(t)}{t}+\frac{a^2(t)}{\sqrt{t}}}
\end{equation}
\end{proposition}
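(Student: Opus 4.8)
The plan is to sharpen the proof of Lemma~\ref{lem:J_1}, keeping explicit track of how fast each approximation deteriorates once $b$ is allowed to grow with $t$. Fix $b\in[0,a(t)]$ and suppose $t$ is large enough that $a(t)/\sqrt t\le\nu$; then $a^2(t)/t\le\nu^2$ and the lower endpoint $\tfrac{t}{(b+\nu\sqrt t)^2}=\tfrac{1}{(\nu+b/\sqrt t)^2}$ lies in the fixed interval $[\tfrac1{(1+\nu)^2},\tfrac1{\nu^2}]$. Starting from the representation \eqref{eq:derivativeJ_1} of $t\,\partial_b J_1(t,b,\nu)$, I would first recall (as in the proof of Lemma~\ref{lem:J_1}) that the boundary term at $u=t/b^2$ vanishes because $\sin(2\pi j)=0$, while the boundary term at the lower endpoint is dominated, uniformly in $b\le a(t)$, by $\frac{a(t)}{\nu^3\sqrt t}\sum_{j\ge1}\pij e^{-\frac{\pij}{4(a^2(t)/t+\nu^2)}}\le C\,\frac{a(t)}{\sqrt t}$, the series being uniformly summable because $a^2(t)/t\le\nu^2$; see \eqref{eq:J1error}. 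Thus it remains to compare $2\int_{t/(b+\nu\sqrt t)^2}^{t/b^2}u^{-1}\,\partial_\gamma\Gamma(b,u,1/t,1,\sqrt u/\sqrt t,1)\,du$ with its proposed limit $2\int_{1/\nu^2}^{\infty}u^{-1}\,\partial_\gamma\Gamma(0,u,0,1,0,1)\,du$.

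To carry out that comparison I would split the difference into three pieces: the integrand discrepancy over the common range $[1/\nu^2,t/b^2]$, the contribution of the short interval $[t/(b+\nu\sqrt t)^2,\,1/\nu^2]$, and the tail over $[t/b^2,\infty)$. For the integrand discrepancy, \eqref{eq:derivativeGamma2} gives
\[
\tfrac2u\big[\partial_\gamma\Gamma(b,u,\tfrac1t,1,\tfrac{\sqrt u}{\sqrt t},1)-\partial_\gamma\Gamma(0,u,0,1,0,1)\big]=-\sum_{j\ge1}\pij\,\expeigenvalue u\Big[\tfrac{\cos(\pi jb\sqrt u/\sqrt t+\pi j)}{\pij u/t+1}-\cos\pi j\Big],
\]
and the elementary estimate $\big|\tfrac{\cos(\pi jb\sqrt u/\sqrt t+\pi j)}{\pij u/t+1}-\cos\pi j\big|\le \pi jb\sqrt u/\sqrt t+\pij u/t$ (Lipschitz continuity of $\cos$, together with $|1-(1+x)^{-1}|\le x$ and $(1+x)^{-1}\le1$ for the denominator). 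Integrating in $u$ over $[1/\nu^2,t/b^2]$, and using the crucial bound $b\sqrt u/\sqrt t\le1$ valid throughout that range, bounds this piece by $C_1(\nu)\,\tfrac{b}{\sqrt t}+C_2(\nu)\,\tfrac1t$, where $C_1(\nu)=\int_{1/\nu^2}^{\infty}\sqrt u\sum_{j\ge1}\pi^3 j^3\expeigenvalue u\,du<\infty$ and $C_2(\nu)=\int_{1/\nu^2}^{\infty}u\sum_{j\ge1}\pi^4 j^4\expeigenvalue u\,du<\infty$, finiteness coming from the exponential decay of these series for $u$ bounded away from $0$. The short interval has length $\tfrac1{\nu^2}-\tfrac1{(\nu+b/\sqrt t)^2}\le\tfrac{2b}{\nu^3\sqrt t}+\tfrac{b^2}{\nu^4 t}$, and on it $u^{-1}|\partial_\gamma\Gamma|$ is bounded by a constant depending only on $\nu$, so this piece is $O(b/\sqrt t+b^2/t)$. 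The tail $\int_{t/b^2}^{\infty}u^{-1}|\partial_\gamma\Gamma(0,u,0,1,0,1)|\,du\lesssim e^{-\frac{\pi^2}{2}\,t/b^2}\le e^{-\frac{\pi^2}{2}\,t/a^2(t)}$ is superpolynomially small since $a(t)=o(t^{1/4})$. Summing the three contributions with the boundary estimate and absorbing $\tfrac1t$ and $\tfrac{b^2}{t}$ into $\tfrac{a^2(t)}{t}+\tfrac{a(t)}{\sqrt t}$ proves \eqref{eq:J1errorImprovement}.

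Finally, \eqref{eq:J1errorImprovement1} follows by integration: since $J_1(t,0,\nu)=0$, for $b\le a(t)$ one has
\[
tJ_1(t,b,\nu)-2b\int_{1/\nu^2}^{\infty}u^{-1}\partial_\gamma\Gamma(0,u,0,1,0,1)\,du=\int_0^b\!\Big(t\,\partial_c J_1(t,c,\nu)-2\int_{1/\nu^2}^{\infty}u^{-1}\partial_\gamma\Gamma(0,u,0,1,0,1)\,du\Big)\,dc,
\]
whose modulus is at most $b\,R(t)\le a(t)\cdot C\big(a^2(t)/t+a(t)/\sqrt t\big)=C\big(a^3(t)/t+a^2(t)/\sqrt t\big)$. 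The only real difficulty is keeping all estimates uniform in $b$ over the growing range $[0,a(t)]$; this is resolved by the two structural facts that $b\sqrt u/\sqrt t\le1$ on the integration domain $u\le t/b^2$ and that the lower integration limit $\tfrac1{(\nu+b/\sqrt t)^2}$ stays bounded away from $0$, which together keep all the relevant $u$-integrals convergent with $\nu$-dependent but $t$- and $b$-independent bounds.
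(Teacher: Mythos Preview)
Your proposal is correct and follows essentially the same route as the paper: isolate the boundary terms of \eqref{eq:derivativeJ_1} via \eqref{eq:J1error}, then split the remaining integral comparison into a short interval near $1/\nu^2$, an integrand discrepancy over the common range, and a tail beyond $t/b^2$, and finally integrate in $b$ to pass from \eqref{eq:J1errorImprovement} to \eqref{eq:J1errorImprovement1}. The only substantive difference is that the paper takes the common range to be $[\,t/(b+\nu\sqrt t)^2,\,t/b^2\,]$ and uses the sharper bound $|1-\cos x|\le x^2/2$ to get $O(a^2(t)/t)$ for the integrand discrepancy, whereas you take the common range $[\,1/\nu^2,\,t/b^2\,]$ and use the Lipschitz bound $|\cos(x+\pi j)-\cos\pi j|\le|x|$, getting $O(a(t)/\sqrt t)$; both are absorbed by the stated bound $C\big(a^2(t)/t+a(t)/\sqrt t\big)$, so the difference is purely cosmetic.
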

\begin{proof}
From \eqref{eq:J1error} which is valid, for any $a>0$, we see that the following bound can be immediately derived
\begin{align}\label{eq:R1}
\nonumber &R(t)\leq \frac{Ca(t)}{\sqrt{t}}+\\
\nonumber&\sup_{0\leq b\leq a(t)}\labsrabs{2\int_{\frac{1}{\nu^2}}^{\infty} \frac{1}{u}\frac{\partial \Gamma}{\partial \gamma}\lbrb{0,u,0,1,0,1}du-2\int_{\frac{t}{\lbrb{b+\nu\sqrt{t}}^2}}^{\frac{t}{b^2}} \frac{1}{u}\frac{\partial \Gamma}{\partial \gamma}\lbrb{b,u,\frac{1}{t},1,\frac{\sqrt{u}}{\sqrt{t}},1}du}=\\
&\frac{Ca(t)}{\sqrt{t}}+R_1(t).
\end{align}
Next we study $R_1(t)$. Consider
\[\hat R_{1}(t)=\sup_{0\leq b\leq a(t)}\labsrabs{\int_{\frac{t}{\lbrb{b+\nu\sqrt{t}}^2}}^{\frac{t}{b^2}}\lbrb{\frac{2}{u}\frac{\partial \Gamma}{\partial \gamma}\lbrb{0,u,0,1,0,1}du- \frac{2}{u}\frac{\partial \Gamma}{\partial \gamma}\lbrb{b,u,\frac{1}{t},1,\frac{\sqrt{u}}{\sqrt{t}},1}}du}.\]
Then from \eqref{eq:derivativeGamma2} we easily get writing 
\begin{align*}
&- \frac{2}{u}\frac{\partial \Gamma}{\partial \gamma}\lbrb{b,u,\frac{1}{t},1,\frac{\sqrt{u}}{\sqrt{t}},1}=\sumoneinf \frac{\pij}{\pij \frac{u}{t}+1}\expeigenvalue{u}\cos\lbrb{\pi j b\frac{\sqrt{u}}{\sqrt{t}}+\pi j}=\\
&\sumoneinf \pij\expeigenvalue{u}\cos\lbrb{\pi j b\frac{\sqrt{u}}{\sqrt{t}}+\pi j}- \frac{u}{t}\sumoneinf \frac{\pi^4j^4}{\pij \frac{u}{t}+1}\expeigenvalue{u}\cos\lbrb{\pi^4j^4 b\frac{\sqrt{u}}{\sqrt{t}}}
\end{align*}
 that the following inequalities hold
\begin{align*}
&\hat R_{1}(t)=\sup_{0\leq b\leq a(t)}\labsrabs{\int_{\frac{t}{\lbrb{b+\nu\sqrt{t}}^2}}^{\frac{t}{b^2}}\lbrb{\frac{2}{u}\frac{\partial \Gamma}{\partial \gamma}\lbrb{0,u,0,1,0,1}du- \frac{2}{u}\frac{\partial \Gamma}{\partial \gamma}\lbrb{b,u,\frac{1}{t},1,\frac{\sqrt{u}}{\sqrt{t}},1}}du}\leq\\
 \sup_{0\leq b\leq a(t)}&\int_{\frac{t}{\lbrb{b+\nu\sqrt{t}}^2}}^{\frac{t}{b^2}}\labsrabs{\sumoneinf \frac{\pij}{\pij \frac{u}{t}+1}\expeigenvalue{u}\cos\lbrb{\pi j\lbrb{ b\frac{\sqrt{u}}{\sqrt{t}}+1}}-\sumoneinf\pij\expeigenvalue{u}\cos\lbrb{\pi j}}du\\
&\leq \frac{1}{t}\sup_{0\leq b\leq a(t)}\int_{\frac{t}{\lbrb{b+\nu\sqrt{t}}^2}}^{\frac{t}{b^2}}u\sumoneinf \pi^4j^4\expeigenvalue{u}du+\\
\sup_{0\leq b\leq a(t)}&\int_{\frac{t}{\lbrb{b+\nu\sqrt{t}}^2}}^{\frac{t}{b^2}}\labsrabs{\sumoneinf\pij\expeigenvalue{u}\lbrb{\cos\lbrb{\pi j\lbrb{ b\frac{\sqrt{u}}{\sqrt{t}}+1}}-\cos\lbrb{\pi j}}}du\leq \\
&\frac{1}{t}\sumoneinf\int_{\frac{t}{\lbrb{a(t)+\nu\sqrt{t}}^2}}^{\infty}u \pi^4j^4\expeigenvalue{u}du+\frac{a^2(t)}{t}\sumoneinf\int_{\frac{t}{\lbrb{a(t)+\nu\sqrt{t}}^2}}^{\infty}u \pi^4j^4\expeigenvalue{u}du\leq\\
&\frac{4a^2(t)}{t}\sumoneinf\int_{\frac{1}{2\nu^2}}^{\infty}u \pi^4j^4\expeigenvalue{u}du\leq C\frac{a^2(t)}{t},
\end{align*}
where we have used implicitly $|\cos(x+\pi j)-\cos(\pi j)|=|1-\cos(x)|\leq x^2/2$. Next we estimate using \eqref{eq:derivativeGamma2} that
\begin{align*}
&\tilde R_{1}(t)=\sup_{0\leq b\leq a(t)}\labsrabs{\int_{\frac{t}{\lbrb{b+\nu\sqrt{t}}^2}}^{\frac{1}{\nu^2}}\frac{2}{u}\frac{\partial \Gamma}{\partial \gamma}\lbrb{0,u,0,1,0,1}du}\leq \int_{\frac{1}{\lbrb{\frac{a(t)}{\sqrt{t}}+\nu}^2}}^{\frac{1}{\nu^2}}\labsrabs{\frac{2}{u}\frac{\partial \Gamma}{\partial \gamma}\lbrb{0,u,0,1,0,1}}du\leq \\
&\lbrb{\frac{1}{\nu^2}-\frac{1}{\lbrb{\frac{a(t)}{\sqrt{t}}+\nu}^2}}\suminfinf \pij \expeigenvalue{\frac{1}{\lbrb{\frac{a(t)}{\sqrt{t}}+\nu}^2}}\leq \frac{C}{\nu^4}\lbrb{\frac{a^2(t)}{t}+\frac{a(t)}{\sqrt{t}}}.
\end{align*}
Finally, using again \eqref{eq:derivativeGamma2} we obtain
\begin{align*}
& \bar R_{1}(t)=\sup_{0\leq b\leq a(t)}\labsrabs{\int_{\frac{t}{b^2}}^{\infty}\frac{2}{u}\frac{\partial \Gamma}{\partial \gamma}\lbrb{0,u,0,1,0,1}du}\leq\\
&\sumoneinf\int_{\frac{t}{a^2(t)}}^{\infty}\pij \expeigenvalue{u} du=
\sumoneinf e^{-\pij \frac{t}{2a^2(t)}}\leq C\frac{a^2(t)}{t}.
\end{align*}
Since $R_1(t)\leq \tilde{R}_1(t)+\bar{R}_1(t)+\hat R_1(t)$ we get that
\[R_1(t)\leq C\lbrb{\frac{a^2(t)}{t}+\frac{a(t)}{\sqrt{t}}}\]
and \eqref{eq:J1errorImprovement} follows. Now \eqref{eq:J1errorImprovement1} is an immediate consequence of \eqref{eq:J1errorImprovement} applied in the sequence of inequalities
\begin{align*}
r(t)=&\sup_{0\leq b\leq a(t)}\labsrabs{t J_1(t,b,\nu)-2b\int_{\frac{1}{\nu^2}}^{\infty} \frac{1}{u}\frac{\partial \Gamma}{\partial \gamma}\lbrb{0,u,0,1,0,1}du}\leq\\
&\sup_{0\leq b\leq a(t)}\int_{0}^{b}\labsrabs{\frac{\partial J_1(t,c,\nu)}{\partial c}-2\int_{\frac{1}{\nu^2}}^{\infty} \frac{1}{u}\frac{\partial \Gamma}{\partial \gamma}\lbrb{0,u,0,1,0,1}du}dc\leq\\
&\int_{0}^{a(t)}\sup_{0\leq b\leq a(t)}\labsrabs{\frac{\partial J_1(t,c,\nu)}{\partial c}-2\int_{\frac{1}{\nu^2}}^{\infty} \frac{1}{u}\frac{\partial \Gamma}{\partial \gamma}\lbrb{0,u,0,1,0,1}du}dc\leq a(t)R(t).
\end{align*}
\end{proof}
\section{The function $J_2(a,t)$}\label{sec:J_2}
Define, for any $0<\nu\leq \infty$,
\begin{align}\label{eq:J_2Recall}
\nonumber&J_2(t,a,\nu):=\frac{1}{\sqrt{t}}\int_{\frac{t}{\lbrb{a+\nu\sqrt{t}}^2}}^{\frac{t}{a^2}} \frac{1}{\sqrt{u}}\sumoneinf \frac{\pi j}{\pi^2j^2\frac{u}{t}+1}\expeigenvalue{u} \eigenfunction{\pi j a \frac{\sqrt{u}}{\sqrt{t}}}e^{-\frac{\sqrt{t}}{\sqrt{u}}}du=\\
&\frac{1}{\sqrt{t}}\int_{\frac{t}{\lbrb{a+\nu\sqrt{t}}^2}}^{\frac{t}{a^2}}\frac{1}{\sqrt{u}}H\lbrb{a,u,\frac{1}{t},1,\frac{\sqrt{u}}{\sqrt{t}},0}du,
\end{align}
where $H\lbrb{a,u,\frac{1}{t},1,\frac{\sqrt{u}}{\sqrt{t}},0}$ is defined in \eqref{eq:H}.
\begin{lemma}\label{lem:J2}
We have that
\begin{equation}\label{eq:J2Bound}
\sup_{\nu>0}\sup_{0\leq b< \infty}t\labsrabs{J_2(t,b,\nu)}=o\lbrb{1}.
\end{equation}
\end{lemma}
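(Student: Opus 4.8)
The plan is to dominate $t\labsrabs{J_2(t,a,\nu)}$ by a quantity $\Phi(t)$ that carries no dependence on $a$ or $\nu$, and then to show $\Phi(t)\to 0$. Starting from the definition \eqref{eq:J_2Recall} together with \eqref{eq:H} (taken with $s=0$), I would apply the crude bounds $\frac{\pi j}{\pi^2j^2 u/t+1}\leq \pi j\leq \pij$ (the second because $\pi j\geq 1$ for $j\geq 1$) and $\labsrabs{\sin(\cdot)}\leq 1$, and then enlarge the integration range from $\lbbrbb{t/(a+\nu\sqrt{t})^2,\,t/a^2}$ to $(0,\infty)$, which is legitimate once absolute values have been taken since the integrand is then nonnegative. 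Recognising via \eqref{eq:F} that $\sumoneinf\pij\expeigenvalue{u}=-G'\lbrb{u,0}=\labsrabs{G'\lbrb{u,0}}$, this gives, uniformly over $a\geq 0$ and $\nu>0$,
\[
t\labsrabs{J_2(t,a,\nu)}\leq \sqrt{t}\IntZeroInf\frac{1}{\sqrt{u}}\labsrabs{G'\lbrb{u,0}}e^{-\sqrt{t}/\sqrt{u}}\,du=:\Phi(t),
\]
so that \eqref{eq:J2Bound} is reduced to the single statement $\Phi(t)\to 0$ as $t\to\infty$.

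To estimate $\Phi(t)$ I would split at $u=1$. On $(0,1]$ the asymptotic \eqref{eq:GsmallAsymp1} with $l=1$, $x=0$ (together with continuity away from the origin) yields $\labsrabs{G'\lbrb{u,0}}\leq Cu^{-3/2}$, and the substitution $u=1/s^2$ turns the corresponding piece of $\Phi(t)$ into $2C\sqrt{t}\int_1^\infty s\,e^{-\sqrt{t}\,s}\,ds=2Ce^{-\sqrt{t}}\lbrb{1+t^{-1/2}}\to 0$. On $[1,\infty)$ I would use, for $u\geq 1$, the factorisation $\labsrabs{G'\lbrb{u,0}}=\pi^2 e^{-\pi^2 u/2}\sumoneinf j^2 e^{-\pi^2(j^2-1)u/2}\leq C_1 e^{-\pi^2 u/2}$, and split once more at $u=\sqrt{t}$: on $[1,\sqrt{t}]$ one has $e^{-\sqrt{t}/\sqrt{u}}\leq e^{-t^{1/4}}$, while on $[\sqrt{t},\infty)$ one has $e^{-\pi^2 u/2}\leq e^{-\pi^2\sqrt{t}/4}e^{-\pi^2 u/4}$; in both cases what is left is a convergent (elementary, Gamma-type) integral, so this part of $\Phi(t)$ is $O\lbrb{\sqrt{t}\,e^{-t^{1/4}}}=o(1)$. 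Adding the two regions gives $\Phi(t)\to 0$, which is exactly \eqref{eq:J2Bound}.

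I do not expect a genuine obstacle here; the only point needing care is that neither exponential factor in the integrand beats the prefactor $\sqrt{t}$ uniformly in $u$ on its own — the heat-kernel decay $\expeigenvalue{u}$ is ineffective for small $u$, and the extra factor $e^{-\sqrt{t}/\sqrt{u}}$ is ineffective for large $u$ — so one must localise, exploiting $e^{-\sqrt{t}/\sqrt{u}}$ where $u$ is small and the heat-kernel decay where $u$ is large. The cut points $u=1$ and $u=\sqrt{t}$ are chosen precisely so that on each of the three subintervals one of the two factors contributes a decay in $t$ strong enough to absorb $\sqrt{t}$. A minor but useful point is that the resulting bound is automatically uniform in $\nu$ (including the value $\nu=\infty$ needed in Lemma~\ref{lem:Density}), since enlarging the domain to $(0,\infty)$ discards all dependence on $\nu$ and on $a$ simultaneously.
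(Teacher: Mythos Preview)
Your proof is correct and follows essentially the same approach as the paper: both bound $|\sin|\leq 1$, $\frac{\pi j}{\pi^2j^2u/t+1}\leq\pij$, enlarge the integration domain to $(0,\infty)$ to obtain the uniform bound $\Phi(t)=\sqrt{t}\IntZeroInf u^{-1/2}\labsrabs{G'(u,0)}e^{-\sqrt{t}/\sqrt{u}}\,du$, and then split the integral according to the size of $u$. The only cosmetic difference is that the paper cuts at $u=1$ and $u=t^{1/4}$ (treating three regions) whereas you cut at $u=1$ and $u=\sqrt{t}$; the underlying idea---use the $e^{-\sqrt{t}/\sqrt{u}}$ factor for small $u$ and the heat-kernel decay $e^{-\pij u/2}$ for large $u$---is identical.
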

\begin{proof}

 From \eqref{eq:J_2Recall} we easily get the estimate 
\begin{equation}\label{eq:estimateJ_2}
\sup_{\nu>0}\sup_{0\leq b<\infty}t\labsrabs{J_2(t,b,\nu)}\leq \sqrt{t}\IntZeroInf \frac{1}{\sqrt{u}}\sumoneinf \pij \expeigenvalue{u}e^{-\frac{\sqrt{t}}{\sqrt{u}}}du=-\sqrt{t}\IntZeroInf \frac{1}{\sqrt{u}}G'\lbrb{u,0}e^{-\frac{\sqrt{t}}{\sqrt{u}}}du.
\end{equation}
We then split this integral into three regions.

\textbf{Region $u\leq 1$:}
We know from $\eqref{eq:GsmallAsymp1}$ that $\labsrabs{G'(u,0)}\sim Cu^{-\frac{3}{2}}$. Then this feeds in \eqref{eq:estimateJ_2} to yield
\begin{align}\label{eq:J_2estimate1}
\nonumber&\sqrt{t}\IntZeroOne\frac{1}{\sqrt{u}}\sumoneinf \pij \expeigenvalue{u}e^{-\frac{\sqrt{t}}{\sqrt{u}}}du=\sqrt{t} \IntZeroOne\frac{1}{\sqrt{u}}\labsrabs{G'(u,0)}e^{-\frac{\sqrt{t}}{\sqrt{u}}}du\leq\\
& \sqrt{t}e^{-\frac{\sqrt{t}}{2}}\IntZeroOne\frac{1}{\sqrt{u}}\labsrabs{G'(u,0)}e^{-\frac{1}{2\sqrt{u}}}du=o(1)
\end{align}
\textbf{Region $1<u\leq t^{\frac{1}{4}}$:} For this region we directly estimate
\begin{align}\label{eq:J_2estimate2}
&\sqrt{t}\int_{1}^{t^{\frac{1}{4}}}\frac{1}{\sqrt{u}}\sumoneinf \pij \expeigenvalue{u}e^{-\frac{\sqrt{t}}{\sqrt{u}}}du\leq \sqrt{t}e^{-t^{\frac{1}{4}}}\sumoneinf \pij \expeigenvalue{}\int_{1}^{t^{\frac{1}{4}}}e^{-\lbrb{u-1}}du=o(1).
\end{align}
\textbf{Region $t^{\frac{1}{4}}<u<\infty$:} This part is also easily estimated as follows
\begin{align}\label{eq:J_2estimate3}
&\sqrt{t}\int_{t^{\frac{1}{4}}}^{\infty}\frac{1}{\sqrt{u}}\sumoneinf \pij \expeigenvalue{u}e^{-\frac{\sqrt{t}}{\sqrt{u}}}du\leq \sqrt{t}e^{-\frac{t^{\frac{1}{4}}}{4}}\int_{t^{\frac{1}{4}}}^{\infty}\lbrb{\sumoneinf \pij \expeigenvalue{\frac{u}{4}}}du=o(1).
\end{align}
Collecting \eqref{eq:J_2estimate1}, \eqref{eq:J_2estimate2} and \eqref{eq:J_2estimate3} and plugging them in \eqref{eq:estimateJ_2} we prove \eqref{eq:J2Bound}.
\end{proof}
\section*{Acknowledgement}
We would like to thank Titus Hilberdink for helping us to simplify the proof of Lemma 1.1 considerably. We would also thank Prof. Velenik for pointing out the references \cite{IV12} and \cite{KM12} and explaining the available results in the area.

\end{document}